\newtheorem{remark}{Remark}
\newcommand{\dbar}{\ensuremath{\overline\partial}}
\newcommand{\dbarstar}{\ensuremath{\overline\partial^*}}
\newcommand{\C}{\ensuremath{\mathbb{C}}}
\newcommand{\R}{\ensuremath{\mathbb{R}}}
\newcommand{\norm}[1]{\left\Vert#1\right\Vert}
\newcommand{\abs}[1]{\left\vert#1\right\vert}
\newcommand{\set}[1]{\left\{#1\right\}}
\newcommand{\To}{\rightarrow}
\newcommand{\cali}[1]{\mathscr{#1}}
\DeclareFontFamily{U}{mathx}{\hyphenchar\font45}
\DeclareFontShape{U}{mathx}{m}{n}{
      <5> <6> <7> <8> <9> <10>
      <10.95> <12> <14.4> <17.28> <20.74> <24.88>
      mathx10
      }{}
\DeclareSymbolFont{mathx}{U}{mathx}{m}{n}
\DeclareMathAccent{\widecheck}{0}{mathx}{"71}
\DeclareMathAccent{\wideparen}{0}{mathx}{"75}
\def\eps{\varepsilon}
\def\omz{\Omega}
\newcommand{\sumprime}{\if@display\sideset{}{'}\sum%
	\else\sum'\fi}
\newtheorem{thm}{Theorem}[section]
\newtheorem{prop}[thm]{Proposition}
\newtheorem{lem}[thm]{Lemma}
\theoremstyle{definition}
\newtheorem{defin}[thm]{Definition}
\theoremstyle{remark}
\numberwithin{equation}{section}
\providecommand\ufootnote[1]{{\let\thefootnote\relax\footnote[0]{#1}}}
\newcommand{\N}{\mathbb{N}}
\newcommand{\ol}{\overline}
\newcommand{\pa}{\partial}
 \DeclareMathOperator{\im}{Im}
\DeclareMathOperator{\ke}{Ker}
 \DeclareMathOperator{\Dom}{Dom}
\DeclareMathOperator{\Tr}{Tr} 
\begin{document}

\title[Heat kernel asymptotics]{Heat kernel asymptotics for Kohn Laplacians on CR manifolds}

\author{Chin-Yu Hsiao and Weixia Zhu}

\address{Institute of Mathematics, Academia Sinica, 6F, Astronomy-Mathematics Building, No.1, Sec.4, Roosevel} \email{chsiao@math.sinica.edu.tw; chinyu.hsiao@gmail.com}

\address{Fakult\"{a}t f\"{u}r Mathematik, Universit\"{a}t of Wien, Oskar-Morgenstern-Platz 1, 1090 Wien, Austria} \email{weixia.zhu@univie.ac.at; zhuvixia@gmail.com}

\begin{abstract} 
	Let $X$ be an abstract orientable (not necessarily compact)
	CR manifold of dimension $2n+1$, $n\geq1$, and let $L^k$ be the $k$-th tensor power of a CR complex line bundle $L$ over $X$. Suppose that condition $Y(q)$ holds at each point of $X$, we establish asymptotics of the heat kernel of Kohn Laplacian with values in $L^k$. As an application, we give a heat kernel proof of Morse  inequalities on compact CR manifolds. When $X$ admits a transversal CR $\mathbb R$-action, we also 
	establish  asymptotics of the $\mathbb R$-equivariant heat kernel of Kohn Laplacian with values in $L^k$. As an application, we get $\mathbb R$-equivariant Morse  inequalities on compact CR manifolds with transversal CR $\mathbb R$-action. 	
\bigskip

\noindent{{\sc Mathematics Subject Classification}: 32W05, 32G05, 35J25, 35P15.}	

	\smallskip
	
	\noindent{{\sc Keywords}: CR manifolds, CR line bundles, Kohn Laplacian, Heat kernel  asymptotics}
\end{abstract}

\maketitle

\tableofcontents

\section{Introduction}\label{sec:intro}

Let $X$ be an orientable  compact CR manifold of dimension $2n+1$ and let $L^k$ be the $k$-th tensor power of a CR complex line bundle $L$ over $X$. The first author  and Marinescu established in \cite{HM12} the Morse inequalities on CR manifolds for $\dbar_b$ complex with values in $L^k$. Their approach bases on the study of asymptotic  behavior of the Szeg\H{o} kernels.  
In complex geometry, Demailly\cite{D91},  Bismut\cite{Bismut87} used the heat kernel asymptotics  for Kodaira Laplacians to establish  Morse inequalities on complex manifolds (see also~\cite[Section 1.6]{MM07}). It is a natural question that if we can also establish Morse inequalities on CR manifolds by using heat kernel asymptotics. On the other hand, the study of the heat kernel asymptotics is interesting in itself because it is closely related to some problems in CR geometry like  CR torsion problems, CR geometric quantization, $G$-equivariant CR Morse inequalities, etc. The purpose of this paper is to establish the heat kernel asymptotics  of Kohn Laplacian with values in $L^k$. 
When $X$ admits a transversal CR $\mathbb R$-action, we also establish  asymptotics of the $\mathbb R$-equivariant heat kernel of Kohn Laplacian with values in $L^k$. As an application, we get $\mathbb R$-equivariant Morse inequalities on compact CR manifolds with transversal CR $\mathbb R$-action. 
We will combine the  techniques of~\cite{HM12} and Ma-Marinescu~\cite[Section 1.6]{MM07} to get the heat kernel asymptotics of Kohn Laplacian on CR manifolds with values in $L^k$. 

We now formulate the main results. 
We refer the reader to Section~\ref{sec:prelim} for some notations and terminology used here. Let $(X,T^{1,0}X)$ be an abstract orientable (not necessarily compact)
CR manifold of dimension $2n+1$, $n\geq1$, and let $(L^k,h^{L^k})$ be the $k$-th tensor power of a CR complex line bundle $(L,h^L)$ over $X$, 
where $h^L$ denotes a Hermitian metric of $L$. Fix a Hermitian metric $\langle\,\cdot\mid\cdot\,\rangle$ on $\mathbb CTX$ so that 
$T^{1,0}X\perp T^{0,1}X$. Let $(\,\cdot\mid\cdot\,)_{h^{L^k}}$ be the $L^2$ inner product on $\Omega^{0,q}_c(X,L^k)$ induced by 
$\langle\,\cdot\mid\cdot\,\rangle$ and $h^{L^k}$. Let $L^2_{(0,q)}(X,L^k)$ be the completion of $\Omega^{0,q}_c(X,L^k)$ with respect to 
$(\,\cdot\mid\cdot\,)_{h^{L^k}}$. Let 
\[\Box^q_{b,k}: {\rm Dom\,}\Box^q_{b,k}\subset L^2_{(0,q)}(X,L^k)\To L^2_{(0,q)}(X,L^k)\]
be the Gaffney extension of Kohn Laplacian with values in $L^k$ (see \eqref{e-gue210228yyd}). Then, $\Box^q_{b,k}$ is a non-negative self-adjoint operator. 
Let 
\[e^{-t\Box^q_{b,k}}: L^2_{(0,q)}(X,L^k)\To L^2_{(0,q)}(X,L^k)\]
be the heat operator of $\Box^q_{b,k}$ and let 
\[e^{-t\Box^q_{b,k}}(x,y)\in\mathscr D'(\mathbb R_+\times X\times X,(T^{*0,q}X\otimes L^k)\boxtimes(T^{*0,q}X\otimes L^k)^*)\]
be the distribution kernel of $e^{-t\Box^q_{b,k}}$ with respect to $dv_X(x)$ (see the discussion after \eqref{e-gue210607yyd}), where $dv_X(x)$ is the volume form on $X$ 
induced by $\langle\,\cdot\mid\cdot\,\rangle$. Assume that $Y(q)$ holds (see Definition~\ref{d-gue210607yyd}, for the meaning of condition $Y(q)$). Then, 
\[e^{-t\Box^q_{b,k}}(x,y)\in\cali{C}^\infty(\mathbb R_+\times X\times X,(T^{*0,q}X\otimes L^k)\boxtimes(T^{*0,q}X\otimes L^k)^*).\]
We will use the canonical identification ${\rm End\,}(L^k)=\mathbb C$, especially 
\[e^{-t\Box^q_{b,k}}(x,x)\in\cali{C}^\infty(\mathbb R_+\times X\times X,T^{*0,q}X\boxtimes(T^{*0,q}X)^*)\]
(see \eqref{e-gue210303yydIII}). The main goal of this work is to study the asymptotic behavior $e^{-\frac{t}{k}\Box^q_{b,k}}(x,x)$ as $k\To+\infty$. 

To state our main results, we introduce some notations. Let $s$ be a local CR trivializing section of $L$ defined on an open set $D$ of $X$, $\abs{s}^2_{h^L}=e^{-\phi}$, 
$\phi\in\cali{C}^\infty(D,\mathbb R)$. For $x\in D$, let $\mathcal R_x^\phi$ be the Hermitian quadratic form on $T^{1,0}_xX$ induced by $\phi$ (see Definition~\ref{d-gue210524yyd}). 
Let 
\begin{equation}\label{e-gue210607yyds}
\begin{split}
&\dot{\mathcal{R}}^\phi_x: T^{1,0}_xX\To T^{1,0}_xX,\\
&\dot{\mathcal{L}}_x: T^{1,0}_xX\To T^{1,0}_xX,
\end{split}
\end{equation}
be the linear maps given by $\langle\,\dot{\mathcal{R}}^\phi_xU\mid V\,\rangle=\mathcal{R}^\phi_x(U,\ol V)$, 
$\langle\,\dot{\mathcal{L}}_xU\mid V\,\rangle=\mathcal{L}_x(U,\ol V)$, for all $U, V\in T^{1,0}_xX$, where $\mathcal{L}_x$ is the Levi form of $X$ at $x$ (see Definition~\ref{d-gue210607ycd}). For every $\eta\in\mathbb R$, let 
\[{\rm det\,}(\dot{\mathcal{R}}^\phi_x-2\eta\dot{\mathcal{L}}_x)=\mu_1(x)\cdots\mu_n(x),\]
where $\mu_j(x)$, $j=1,\ldots,n$, are the eigenvalues of $\dot{\mathcal{R}}^\phi_x-2\eta\dot{\mathcal{L}}_x$ with respect to $\langle\,\cdot\mid\cdot\,\rangle$, 
and put 
\begin{equation}\label{e-gue210607yydt}
\omega_x^\eta=\sum_{j,l=1}^n(\mathcal{R}^\phi_x-2\eta\mathcal L_x)(U_l,\ol U_j)\ol\omega^j\wedge(\ol\omega^l\wedge)^\star: T^{*0,q}_xX\To T^{*0,q}_xX,
\end{equation}
where $\{U_j\}_{j=1}^n$ is an orthonormal frame of $T^{1,0}_xX$ with dual frame $\{\omega^j\}_{j=1}^n\subset T^{*1,0}_xX$. 
It is straightforward to check that if $Y(q)$ holds, the map 
\begin{equation}\label{e-gue210718yydt}
\int_\R\dfrac{\det(\dot{\mathcal R}^\phi_x-2\eta\dot{\mathcal L}_x)}{\det\big(1-e^{-t(\dot{\mathcal R}^\phi_x-2\eta\dot{\mathcal L}_x)}\big)}e^{-t\omega_x^\eta}d\eta: T^{*0,q}_xX\To T^{*0,q}_xX
\end{equation}
is well-defined as a continuous linear map. 
It should be mentioned that the definition of $\mathcal R_x^{\phi}$ depends on the choice of local trivializations (local weight $\phi$).
From \cite[Proposition 4.2]{HM12}, it is easy to see that for every $x\in X$, the map \eqref{e-gue210718yydt} is independent of the choice of local weight $\phi$ and hence globally defined. Let $\mathscr L(T^{*0,q}_xX,T^{*0,q}_xX)$ denotes the linear map from $T^{*0,q}_xX$ to $T^{*0,q}_xX$. For $f(x)\in \mathscr L(T^{*0,q}_xX,T^{*0,q}_xX)$, let 
\begin{equation}\label{e-gue210607yydu}
\abs{f(x)}_{\mathscr L(T^{*0,q}_xX,T^{*0,q}_xX)}:=\sum^d_{j,\ell=1}\abs{\langle\,f(x)v_j(x)\mid v_\ell(x)\,\rangle}, 
\end{equation}
where $\set{v_j(x)}^d_{j=1}$ is an orthonormal basis of $T^{*0,q}_xX$ with respect to $\langle\,\cdot\mid\cdot\,\rangle$. 

 Our first main result is the following

\begin{thm}\label{main2}
	Let $X$ be an orientable (not necessarily compact)
CR manifold of dimension $2n-1$, $n\geq1$, and $(L^k,h^{L^k})$ be the $k$-th tensor power of a CR complex line bundle $(L,h^L)$ over $X$.
Given $q\in\set{0,1,\ldots,n}$. 
	Suppose that condition $Y(q)$ holds at each point of $X$. With the notations used above,  let $I\subset\mathbb R_+$ be a compact interval and let $K\Subset X$ be a compact set. Then, there is a constant $C>0$ independent of $k$ such that 
\begin{equation}\label{e-gue210607yydx}
\abs{e^{-t\Box^q_{b,k}}(x,x)}_{\mathscr L(T^{*0,q}_xX,T^{*0,q}_xX)}\leq Ck^{n+1},\ \ \mbox{for all $x\in K$ and $t\in I$}. 
\end{equation}

Moreover, for every $x\in X$, we have 
	\begin{equation}\label{asymoptptic}
	\begin{aligned}
	\lim_{k\to\infty}k^{-(n+1)}e^{-\frac{t}{k}\Box_{b,k}^q}(x,x)=&\frac{1}{(2\pi)^{n+1}}\int_\R\dfrac{\det(\dot{\mathcal R}^\phi_x-2\eta\dot{\mathcal L}_x)}{\det\big(1-e^{-t(\dot{\mathcal R}^\phi_x-2\eta\dot{\mathcal L}_x)}\big)} e^{-t\omega_x^\eta}d\eta.
	\end{aligned}
	\end{equation}
\end{thm}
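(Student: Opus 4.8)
The plan is to follow the now-standard semiclassical/heat-kernel scheme developed by Demailly and Bismut in the complex case and adapted to the CR setting by combining the microlocal analysis of Kohn Laplacians with the line-bundle rescaling. First I would localize: fix $x_0\in X$, choose a local CR trivializing section $s$ of $L$ on an open neighborhood $D$ with $\abs{s}^2_{h^L}=e^{-\phi}$, and pull back the problem via $s^{\otimes k}$ so that the Kohn Laplacian $\Box^q_{b,k}$ on $L^k$-valued forms becomes, on scalar forms, an operator $\Box^{(k)}_{\phi}$ with a large parameter $k$ multiplying the zeroth-order (curvature) term. Then I would introduce scaled coordinates $z = x_0 + u/\sqrt{k}$ (using some local CR normal coordinates adapted to the Levi form, as in~\cite{HM12}), and rescale the bundle so that $\Box^{(k)}_\phi$, after conjugation by the appropriate power of $k$ and the exponential weight, converges in $\cali{C}^\infty_{\loc}$ to a model operator $\Box^{(\infty)}$ on $\R^{2n+1}$, which is a sum of (a Hermite-type operator built from the Hermitian form $\dot{\mathcal R}^\phi_{x_0}-2\eta\dot{\mathcal L}_{x_0}$ in the $2n$ Levi directions, fibered over the transversal variable) plus the curvature-zero-order term $\omega^\eta_{x_0}$; the transversal direction contributes, after a partial Fourier transform in the characteristic codirection with dual variable $\eta$, exactly the $\int_\R d\eta$ appearing in~\eqref{asymoptptic}.

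\textbf{Key steps in order.} (1) Show the needed elementary properties of $e^{-t\Box^q_{b,k}}$: because $Y(q)$ holds, $\Box^q_{b,k}$ is subelliptic with a loss of one derivative, so $e^{-t\Box^q_{b,k}}(x,y)$ is smooth; derive the basic off-diagonal decay of the heat kernel and the finite-propagation-type or Gaussian estimates uniform in $k$ after the $\frac1k$-time rescaling, which reduce the problem to a fixed small coordinate ball around each $x_0$. (2) Establish the uniform bound~\eqref{e-gue210607yydx}: this follows from uniform subelliptic estimates for $\Box^q_{b,k}$ with the semiclassical parameter, i.e.\ a Gårding-type inequality with constants independent of $k$, together with Sobolev embedding, giving the $k^{n+1}$ power from the volume scaling $(\sqrt k)^{2n}\cdot\sqrt k \cdot$(one loss) $= k^{n+1}$ — more precisely $2n$ scaled Levi directions plus one transversal direction, but the transversal Fourier variable $\eta$ ranges over an interval of length $O(k)$ after rescaling, giving the extra power. (3) Prove the spectral-gap / localization of the relevant part of the spectrum: as in~\cite{HM12}, show that on $(0,1)$-... (the range $Y(q)$ controls), the operator $\frac1k\Box^q_{b,k}$ has no small eigenvalues obstructing the limit, so that $e^{-\frac tk\Box^q_{b,k}}$ is governed by the bottom of the rescaled spectrum. (4) Carry out the rescaling argument: conjugate, Taylor-expand the coefficients, and prove $\cali{C}^\infty_{\loc}$-convergence of the rescaled operators and hence (via the smoothing property of the heat semigroup and the heat equation $\partial_t + \Box = 0$ with the resolvent/contour representation $e^{-t\Box} = \frac{1}{2\pi i}\int_\Gamma e^{-t\lambda}(\lambda-\Box)^{-1}d\lambda$) of the rescaled heat kernels on the diagonal to the model heat kernel $e^{-t\Box^{(\infty)}}(0,0)$. (5) Compute $e^{-t\Box^{(\infty)}}(0,0)$ explicitly: it factors over $\eta\in\R$ into a harmonic-oscillator (Mehler formula) computation in the Levi directions, producing the determinant quotient $\det(\dot{\mathcal R}^\phi_{x_0}-2\eta\dot{\mathcal L}_{x_0})/\det(1-e^{-t(\dot{\mathcal R}^\phi_{x_0}-2\eta\dot{\mathcal L}_{x_0})})$ times the form-degree factor $e^{-t\omega^\eta_{x_0}}$, and the normalization constant assembles to $(2\pi)^{-(n+1)}$; well-definedness of the $\eta$-integral under $Y(q)$ is exactly the observation following~\eqref{e-gue210718yydt}.

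\textbf{Main obstacle.} The hard part will be step (4), the uniform-in-$k$ analytic control of the rescaling. In the elliptic (Kodaira) case this rests on elliptic estimates for the rescaled operators; here $\Box^q_{b,k}$ is only subelliptic and the characteristic (transversal) direction does not scale like the Levi directions, so one must run the argument microlocally — splitting into the region where the transversal codirection $\xi_0$ is positive, negative, or bounded, the first two governed by $\Box^{(b)}_{(k)}$ restricted to $\pm$-microlocal pieces and the last being genuinely elliptic — and prove semiclassical subelliptic estimates with constants uniform in $k$ on each piece, then patch. Controlling the error terms in the Taylor expansion of the coefficients of $\Box^q_{b,k}$ in the scaled variables, uniformly on the relevant range of $\eta = \xi_0/k$, and transferring $\cali{C}^\infty_{\loc}$ operator convergence to convergence of heat kernels on the diagonal (using the off-diagonal decay from step (1) to cut off and the resolvent representation to upgrade to smoothness), is where the bulk of the technical work lies; this is precisely the place where the techniques of~\cite{HM12} (for the microlocal/CR side) and of~\cite[Section 1.6]{MM07} (for the heat-kernel rescaling side) must be merged.
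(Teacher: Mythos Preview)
Your overall architecture---localize via a CR trivializing section, rescale anisotropically ($z\mapsto z/\sqrt k$, $\theta\mapsto\theta/k$), show convergence of the rescaled heat kernels to a Heisenberg-group model, and evaluate the model via Mehler's formula after a Fourier transform in the transversal variable---matches the paper's. The technical implementation of the convergence step differs substantially, however, and one of your steps is both unnecessary and potentially false.

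The paper does \emph{not} use off-diagonal/Gaussian estimates, resolvent contour integrals, or a microlocal $\pm$-splitting. For your step~(4) it proceeds instead by: (i) Kohn's subelliptic estimate, applied to the rescaled operator $\Box^q_{\rho,(k)}$, yields $\cali{C}^\ell$-bounds on the rescaled kernel $A_{(k)}(t,x,y)$ that are uniform in $k$ (the constants are uniform because all coefficients of $\Box^q_{\rho,(k)}$, of $kF_k^*\phi_0$, and of the volume density are uniformly bounded in $\cali{C}^\infty$ after scaling; the only global input is the elementary spectral bound $\|(\Box^q_{b,k})^N e^{-\frac tk\Box^q_{b,k}}\|\le C_N(1+k^N/t^N)$); (ii) Arzel\`a--Ascoli plus a Cantor diagonal extracts a subsequential $\cali{C}^\infty_{\loc}$ limit $P(t,x,y)$; (iii) one checks directly that $P(t)$ satisfies the heat equation for $\Box^q_{H_n,\Phi}$, has initial value the identity, and maps into $\mathrm{Dom}\,\Box^q_{H_n,\Phi}$, and then a short Duhamel-type uniqueness computation forces $P(t)=e^{-t\Box^q_{H_n,\Phi}}$. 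Since every subsequence has a further subsequence with this same limit, the full sequence converges. This avoids both the resolvent representation and any microlocal decomposition; the subelliptic estimate is used only in its classical Sobolev form, with uniformity coming for free from the scaling.

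Your step~(3) should be dropped. No spectral gap for $\frac1k\Box^q_{b,k}$ is needed for the heat-kernel limit, and on a noncompact $X$ (which the theorem allows) the assertion that there are ``no small eigenvalues'' is generally false: the spectrum need not be discrete, and even under $Y(q)$ nothing prevents spectrum accumulating at $0$. The paper never invokes such a gap. Likewise, the off-diagonal decay in your step~(1) is not used; localization is achieved simply by working with $A_{(k)}$ on the scaled ball $B_{\log k}$ and using that the $L^2\to L^2$ norm of the heat operator is $\le 1$. Replacing your steps~(1), (3), (4) by the compactness-plus-uniqueness argument above gives a shorter proof with fewer analytic prerequisites.
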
 

Consider the following $\dbar_b$-complex: 
$$
\cdots\longrightarrow\,\omz^{0,q-1}(X, L^k)\,\mathop{\longrightarrow}\limits^{\dbar_{b}}\omz^{0,q}(X, L^k) \,\mathop{\longrightarrow}\limits^{\dbar_{b}}\,\omz^{0,q+1}(X, L^k)\,\longrightarrow\cdots
$$
and  put 
\[
H^q_{b}(X,L^k):=\dfrac{\ke \dbar_{b}:\omz^{0,q}(X,L^k)\to \omz^{0,q+1}(X,L^k)}{\im \dbar_{b}: \omz^{0,q-1}(X,L^k)\to \omz^{0,q}(X,L^k)},\quad 0\le q\le n.\] 
Kohn(see \cite{K65,FK72}) showed that if $Y(q)$ holds and $X$ is compact, then 
\[
H^{q}_{b}(X,L^k)\cong{\rm Ker\,}\Box^q_{b,k},\ \ {\rm dim\,}H^{q}_{b}(X,L^k)={\rm dim\,}{\rm Ker\,}\Box^q_{b,k}<+\infty.\]
When $X$ is compact and $Y(q)$ holds on $X$, 
From Theorem~\ref{main2} as well as applying fundamental trace inequalities which depend on $k>0$(see, e.g., \cite[Theorem 1.4]{Bismut87}, \cite[Section 1.6]{MM07}), we get the following Hsiao-Marinescu's Morse inequalities on CR manifolds:

\begin{thm}[Hsiao-Marinescu]\label{thmmorse}
Let $X$ be an orientable compact
CR manifold with $\dim_\R X=2n+1$, $n\geq1$, and $(L^k,h^{L^k})$ be the $k$-th tensor power of a CR complex line bundle $(L,h^L)$ over $X$.
	 Suppose that condition $Y(j)$ holds at each point of $X$, for all $j=0,1,2,\cdots, q$. With the notations used above, as $k\to\infty$, we have 	
	 \begin{equation}\label{strong}
	\begin{split}
	&\sum_{j=0}^q(-1)^{q-j}\dim H_b^{j}(X,L^k)\\
	& \le\frac{k^{n+1}}{(2\pi)^{n+1}}\sum^q_{j=0}(-1)^{q-j}\int_X\int_{\R_x(j)}\left|\det(\dot{\mathcal R}^\phi_x-2\eta\dot{\mathcal L}_x)\right|d\eta dv_X(x)+o(k^{n+1}),
	\end{split}
	\end{equation}
	where 
	\begin{equation*}
	\begin{aligned}
	\R_x(j)=\{\eta&\in\R\mid \dot{\mathcal R}^\phi_x-2\eta\dot{\mathcal L}_x\,\text{has exactly $j$ negative eigenvalues} 
	\\&\text{and $n-j$ positive eigenvalues}\}.
	\end{aligned}
	\end{equation*}
	
	When $Y(q)$ holds, we have
	\begin{equation}\label{weak}
	\dim H_b^{q}(X,L^k) \le\frac{k^{n+1}}{(2\pi)^{n+1}}\int_X\int_{\R_x(q)}\left|\det(\dot{\mathcal R}^\phi_x-2\eta\dot{\mathcal L}_x)\right|d\eta dv_X(x)+o(k^{n+1}).
	\end{equation}
\end{thm}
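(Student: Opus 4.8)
The plan is to adapt the classical heat-kernel proof of holomorphic Morse inequalities (Demailly, Bismut; see \cite[Section~1.6]{MM07}): bound alternating sums of cohomology dimensions by alternating sums of heat traces, feed in the pointwise asymptotics of Theorem~\ref{main2} after the rescaling $t\mapsto t/k$, and then let $t\to\infty$ in the resulting integral. \textbf{Step~1 (fundamental trace inequality).} Under $Y(j)$ the operator $\Box^j_{b,k}$ is subelliptic in degree~$j$, hence on the compact $X$ has discrete spectrum with $\Tr(e^{-\tau\Box^j_{b,k}})<\infty$ for $\tau>0$, and $\dim H^j_b(X,L^k)=\dim\Ker\Box^j_{b,k}<\infty$ by Kohn's theorem. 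For $\lambda\ge 0$ let $E^j_\lambda$ be the (finite dimensional) span of the eigenspaces of $\Box^j_{b,k}$ of eigenvalue $\le\lambda$; since $\dbar_b$ and $\dbarstar$ commute with $\Box_{b,k}$, the $\dbar_b$-complex restricts to a finite complex $(E^\bullet_\lambda,\dbar_b)$ (completed in degree $q+1$ by $\dbar_b(E^q_\lambda)$) whose cohomology in degrees $0,\ldots,q$ equals $\Ker\Box^\bullet_{b,k}$ by finite dimensional Hodge theory. The algebraic Morse inequality for this finite complex gives, for every $\lambda\ge 0$,
\begin{equation*}
\sum_{j=0}^q(-1)^{q-j}\dim H^j_b(X,L^k)\le\sum_{j=0}^q(-1)^{q-j}\dim E^j_\lambda .
\end{equation*}
Using $\dim E^j_\lambda=\#\{\text{eigenvalues of }\Box^j_{b,k}\text{ in }[0,\lambda]\}$ and the identity $\Tr(e^{-\tau\Box^j_{b,k}})=\tau\int_0^\infty e^{-\tau\lambda}\dim E^j_\lambda\,d\lambda$, multiply the last display by $\tau e^{-\tau\lambda}$, integrate in $\lambda$ and set $\tau=t/k$ to obtain (cf.\ \cite[Theorem~1.4]{Bismut87})
\begin{equation}\label{e-plan-trace}
\sum_{j=0}^q(-1)^{q-j}\dim H^j_b(X,L^k)\le\sum_{j=0}^q(-1)^{q-j}\Tr\!\bigl(e^{-\frac{t}{k}\Box^j_{b,k}}\bigr),\qquad t>0 .
\end{equation}
For the weak bound \eqref{weak} I would instead use only the trivial inequality $\dim\Ker\Box^q_{b,k}\le\Tr(e^{-\frac{t}{k}\Box^q_{b,k}})$, which needs $Y(q)$ alone.

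\textbf{Step~2 ($k\to\infty$).} Writing $\Tr(e^{-\frac{t}{k}\Box^j_{b,k}})=\int_X\operatorname{tr}\bigl(e^{-\frac{t}{k}\Box^j_{b,k}}(x,x)\bigr)dv_X(x)$ and combining the pointwise limit \eqref{asymoptptic} with the uniform bound underlying \eqref{e-gue210607yydx}, in the rescaled form $k^{-(n+1)}|e^{-\frac{t}{k}\Box^j_{b,k}}(x,x)|\le C_t$ on $X$ for $k$ large, dominated convergence on the compact $X$ yields, for each fixed $t>0$,
\begin{equation}\label{e-plan-lim}
\lim_{k\to\infty}k^{-(n+1)}\Tr\!\bigl(e^{-\frac{t}{k}\Box^j_{b,k}}\bigr)=\frac{1}{(2\pi)^{n+1}}\int_X\operatorname{tr}\!\left(\int_{\R}\frac{\det(\dot{\mathcal R}^\phi_x-2\eta\dot{\mathcal L}_x)}{\det\bigl(1-e^{-t(\dot{\mathcal R}^\phi_x-2\eta\dot{\mathcal L}_x)}\bigr)}\,e^{-t\omega_x^\eta}\,d\eta\right)dv_X(x).
\end{equation}
Multiplying \eqref{e-plan-trace} by $k^{-(n+1)}$ and taking $\limsup_{k\to\infty}$ then bounds $\limsup_k k^{-(n+1)}\sum_{j=0}^q(-1)^{q-j}\dim H^j_b(X,L^k)$ by the alternating sum over $j$ of the right-hand sides of \eqref{e-plan-lim}, for every $t>0$.

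\textbf{Step~3 ($t\to\infty$).} Fix $x$ and diagonalize $A_\eta:=\dot{\mathcal R}^\phi_x-2\eta\dot{\mathcal L}_x$, with eigenvalues $\mu_1(\eta),\ldots,\mu_n(\eta)$; by \eqref{e-gue210607yydt} the operator inside the trace in \eqref{e-plan-lim} acts on $\ol\omega^J$, $|J|=j$, by the scalar $\prod_{i\notin J}\frac{\mu_i}{1-e^{-t\mu_i}}\prod_{i\in J}\frac{\mu_i}{e^{t\mu_i}-1}$, each factor of which is positive and strictly decreasing in $t$. Hence the $\eta$-integrand at time $t\ge 1$ is dominated by its value at $t=1$, which is $\eta$-integrable (and, on the compact $X$, has $x$-bounded $\eta$-integral) because the map \eqref{e-gue210718yydt} is well-defined; and for $\eta$ with $\det A_\eta\ne 0$ the scalar above tends, as $t\to\infty$, to $\prod_i|\mu_i(\eta)|=|\det A_\eta|$ when $J=\{i:\mu_i(\eta)<0\}$ and to $0$ otherwise. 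Summing over $J$ with $|J|=j$,
\begin{equation*}
\lim_{t\to\infty}\operatorname{tr}\!\left(\frac{\det A_\eta}{\det(1-e^{-tA_\eta})}\,e^{-t\omega_x^\eta}\right)=
\begin{cases}\bigl|\det(\dot{\mathcal R}^\phi_x-2\eta\dot{\mathcal L}_x)\bigr|,&\eta\in\R_x(j),\\ 0,&\text{otherwise},\end{cases}
\end{equation*}
and a second application of dominated convergence (now under $\int_X\int_{\R}$, using the $t=1$ majorant) lets me pass $t\to\infty$ in the bound from Step~2. The result is exactly the strong inequality \eqref{strong}; running the same argument from $\dim\Ker\Box^q_{b,k}\le\Tr(e^{-\frac{t}{k}\Box^q_{b,k}})$ gives the weak inequality \eqref{weak}.

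\textbf{Main obstacle.} The genuine analysis is already packaged in Theorem~\ref{main2}, so what remains is bookkeeping, with two delicate points. First, the $k\to\infty$ step needs a uniform-in-$k$ majorant for the \emph{rescaled} pointwise heat kernel $k^{-(n+1)}e^{-\frac{t}{k}\Box^j_{b,k}}(x,x)$; this is not literally the statement \eqref{e-gue210607yydx} (which concerns $e^{-t\Box^j_{b,k}}$ for $t$ in a fixed compact interval, whereas here $t/k\to 0$), but it is exactly what the rescaling argument in the proof of Theorem~\ref{main2} produces, so I would extract it as a separate lemma. Second, the $t\to\infty$ interchange requires a $t$-independent integrable majorant on $X\times\R$; the monotonicity of $\mu/(1-e^{-t\mu})$ and $\mu/(e^{t\mu}-1)$ in $t$ (for $\mu\ne 0$) supplies it, modulo the $x$-uniform $\eta$-integrability already implicit in the definition of \eqref{e-gue210718yydt}. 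Everything else — subellipticity and Kohn's Hodge theorem turning $(E^\bullet_\lambda,\dbar_b)$ into a genuine finite complex computing $\Ker\Box^\bullet_{b,k}$, and the algebraic Morse inequality — is classical and identical to the Kodaira Laplacian case.
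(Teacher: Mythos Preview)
Your proof is correct and follows the same route as the paper's: the trace inequality \eqref{morse}, then $k\to\infty$ via Theorem~\ref{main2} together with the uniform bound and dominated convergence, then $t\to\infty$ via the pointwise limit \eqref{negative}. Your observation in the ``main obstacle'' is on point: the bound \eqref{e-gue210607yydx} as literally written concerns time $t$ rather than $t/k$, but what the rescaling in Proposition~\ref{uniformly} actually yields (and what the paper invokes via Proposition~\ref{p-gue210522yyd}) is precisely the bound $k^{-(n+1)}\bigl|e^{-\frac{t}{k}\Box^j_{b,k}}(x,x)\bigr|\le C$ for $t$ in a fixed compact interval of $\mathbb R_+$; and your monotonicity argument for the $t\to\infty$ dominant supplies a justification the paper leaves implicit.
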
 

It is worth to mention that the function
\[x\in X\To \int_{\R_x(j)}\left|\det(\dot{\mathcal R}^\phi_x-2\eta\dot{\mathcal L}_x)\right|d\eta\in\mathbb R\]
is independent of the choice of local weight $\phi$.

As we see, to establish above theorems, we need the assumptions that the Levi form satisfies condition $Y(q)$. However, in some important problems in CR geometry, we may need to study things without any assumption of the Levi form. Therefore, we consider CR manifolds with group action. Let $X$ be an orientable  (not necessarily compact)
CR manifold with $\dim_\R X=2n+1$, $n\geq1$,. Suppose that $X$ admits a transversal and CR $\mathbb R$-action $\eta$, $\eta\in\mathbb R$: $\eta: X\to X$, $x\mapsto\eta\circ x$ (see Definition~\ref{d-gue210531yyd} for the meaning of transversal and CR $\mathbb R$-action).  Let $T\in\cali{C}^\infty(X, TX)$ be the infinitesimal generator of   the $\mathbb R$-action (see \eqref{e-gue150808}).  Suppose that $X$ admits a $\mathbb R$-invariant complete Hermitain metric $\langle\,\cdot\mid\cdot\,\rangle$ on $\mathbb CTX$ so that we have the orthogonal decomposition \[
\C TX=T^{1,0}X\oplus T^{0,1}X\oplus\{\lambda T: \lambda\in\C\}
\]
and $|T|^2=\langle\,T\mid T\,\rangle=1$.  Let $(L,h^L)$ be a rigid CR line bundle over $X$ with a $\mathbb R$-invariant Hermitian metric $h^L$ on $L$ (see Definition~\ref{Def:RigidCVB} for the meaning of rigid CR line bundles).  We will use the same notations as before.  Consider the operator
\[-iT: \Omega^{0,q}_c(X,L^k)\To\Omega^{0,q}_c(X,L^k)\]
and we extend $-iT$ to $L^2_{(0,q)}(X,L^k)$ in the standard way (see \eqref{e-gue210608yyd}). 
Fix $\delta>0$, let 
\[
\begin{split}
&L^2_{(0,q),\leq k\delta}(X,L^k):=E_{-iT}([-k\delta,k\delta]),\\
&\Omega^{0,q}_{\leq k\delta}(X,L^k):=\Omega^{0,q}(X,L^k)\bigcap L^2_{(0,q),\leq k\delta}(X,L^k),
\end{split}\]
where $E_{-iT}$ denotes the spectral measure of $-iT$. Let 
\[
Q_{X,\leq k\delta}: L^2_{(0,q)}(X,L^k)\To L^2_{(0,q),\leq k\delta}(X,L^k)\]
be the orthogonal projection with respect  to $(\,\cdot\mid\cdot\,)_{h^{L^k}}$. 

We have the following partial $\dbar_b$-complex: 
$$
\cdots\longrightarrow\,\omz^{0,q-1}_{\leq k\delta}(X, L^k)\,\mathop{\longrightarrow}\limits^{\dbar_{b}}\omz^{0,q}_{\leq k\delta}(X, L^k) \,\mathop{\longrightarrow}\limits^{\dbar_{b}}\,\omz^{0,q+1}_{\leq k\delta}(X, L^k)\,\longrightarrow\cdots
$$
and  put 
\[
H^q_{b,\leq k\delta}(X,L^k):=\dfrac{\ke \dbar_{b}:\omz^{0,q}_{\leq k\delta}(X,L^k)\to \omz^{0,q+1}_{\leq k\delta}(X,L^k)}{\im \dbar_{b}: \omz^{0,q-1}_{\leq k\delta}(X,L^k)\to \omz^{0,q}_{\leq k\delta}(X,L^k)},\quad 0\le q\le n.\]
Let 
\[
\begin{split}
&\Box^q_{b,k,\leq k\delta}: {\rm Dom\,}\Box^q_{b,k}\bigcap L^2_{(0,q),\leq k\delta}(X,L^k) \To L^2_{(0,q),\leq k\delta}(X,L^k),\\
&\Box^q_{b,k,\leq k\delta}u=\Box^q_{b,k}u,\ \ u\in {\rm Dom\,}\Box^q_{b,k}\bigcap L^2_{(0,q),\leq k\delta}(X,L^k).
\end{split}\]
(See also \eqref{e-gue210531yyde}.)
It is not difficult to check that without $Y(q)$ condition, 
\[H^q_{b,\leq k\delta}(X,L^k)\cong{\rm Ker\,}\Box^q_{b,k,\leq k\delta},\ \ {\rm dim\,}H^q_{b,\leq k\delta}(X,L^k)<+\infty.\]

Let
\[
e^{-t\Box^q_{b,k,\leq k\delta}}:=e^{-t\Box^q_{b,k}}\circ Q_{X,\leq k\delta}: L^2_{(0,q)}(X,L^k)\To{\rm Dom\,}\Box^q_{b,k,\leq k\delta},\ \ t>0,\]
and let 
\[e^{-t\Box^q_{b,k,\leq k\delta}}(x,y)\in\cali{C}^\infty(\mathbb R_+\times X\times X, (T^{*0,q}X\otimes L^k)\boxtimes(T^{*0,q}X\otimes L^k)^*)\]
 be the distribution kernel of $e^{-t\Box^q_{b,k,\leq k\delta}}$ with respect to $dv_X(x)$.  As before, we will use the canonical identification ${\rm End\,}(L^k)=\mathbb C$. Our second main result is the following

\begin{thm}\label{main3}
	Let $X$ be an orientable (not necessarily compact) CR manifold of dimension $2n-1$, $n\geq1$, with a transversal CR $\mathbb R$-action. Suppose that $X$ admits a $\mathbb R$-invariant complete Hermitian metric $\langle\,\cdot\mid\cdot\,\rangle$ on $\mathbb CTX$ so that we have the orthogonal decomposition \[
\C TX=T^{1,0}X\oplus T^{0,1}X\oplus\{\lambda T: \lambda\in\C\}
\]
and $|T|^2=\langle\,T\mid T\,\rangle=1$, where $T\in\cali{C}^\infty(X, TX)$ is the infinitesimal generator of the $\mathbb R$-action. Let $(L^k,h^{L^k})$ be the $k$-th tensor power of a rigid CR complex line bundle $(L,h^L)$ over $X$, where $h^L$ is a $\mathbb R$-invariant Hermitian metric  on $L$. Fix any $q\in\set{0,1,\ldots,n}$. With the notations used above, let $I\subset\mathbb R_+$ be a compact interval and let $K\Subset X$ be a compact set. Then, there is a constant $C>0$ independent of $k$ such that 
\begin{equation}\label{e-gue210608yyda}
\abs{e^{-t\Box^q_{b,k,\leq k\delta}}(x,x)}_{\mathscr L(T^{*0,q}_xX,T^{*0,q}_xX)}\leq Ck^{n+1},\ \ \mbox{for all $x\in K$ and $t\in I$}. 
\end{equation}

Moreover, for every $x\in X$, we have 
	\begin{equation}\label{asymoptpticS1}
	\begin{aligned}
	\lim_{k\to\infty}k^{-(n+1)}e^{-\frac{t}{k}\Box_{b,k,\leq k\delta}^q}(x,x)=&\frac{1}{(2\pi)^{n+1}}\int^\delta_{-\delta}\dfrac{\det(\dot{\mathcal R}^\phi_x-2\eta\dot{\mathcal L}_x)}{\det\big(1-e^{-t(\dot{\mathcal R}^\phi_x-2\eta\dot{\mathcal L}_x)}\big)} e^{-t\omega_x^\eta}d\eta.
	\end{aligned}
	\end{equation}
\end{thm}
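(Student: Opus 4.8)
The plan is to deduce Theorem~\ref{main3} from the local heat kernel analysis carried out for Theorem~\ref{main2}, combined with a Fourier decomposition along the $\mathbb R$-action. Since $\langle\,\cdot\mid\cdot\,\rangle$ is $\mathbb R$-invariant and $(L,h^L)$ is rigid with $\mathbb R$-invariant $h^L$, the infinitesimal generator $T$ commutes with $\dbar_b$ and with its adjoint, hence $-iT$ commutes with $\Box^q_{b,k}$; consequently $Q_{X,\leq k\delta}=E_{-iT}([-k\delta,k\delta])$ commutes with $e^{-t\Box^q_{b,k}}$, so $e^{-t\Box^q_{b,k,\leq k\delta}}=e^{-t\Box^q_{b,k}}\circ Q_{X,\leq k\delta}$ is a spectral truncation of the genuine heat operator. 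I would work in a rigid local chart $U\cong D\times\mathbb R$ adapted to the $\mathbb R$-action, in which $T=\partial_\theta$, the local weight $\phi=\phi(z)$ is independent of $\theta$, and all structures are translation invariant in $\theta$, and take the partial Fourier transform in $\theta$. This realizes $L^2_{(0,q)}$ over $U$ as a direct integral over the frequency $\eta'$ dual to $\theta$, diagonalizes $-iT$ as multiplication by $\eta'$, and decomposes $\Box^q_{b,k}$ as a family $\{\Box^q_{b,k,\eta'}\}_{\eta'\in\mathbb R}$ of operators on $D$ that are elliptic (fixing the $T$-frequency restores ellipticity in the remaining $2n$ directions); the projection $Q_{X,\leq k\delta}$ becomes the restriction to $\{|\eta'|\leq k\delta\}$. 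On the diagonal this gives, for $x=(z,\theta)\in U$,
\[
e^{-\tfrac{t}{k}\Box^q_{b,k,\leq k\delta}}(x,x)=\frac1{2\pi}\int_{|\eta'|\leq k\delta}e^{-\tfrac{t}{k}\Box^q_{b,k,\eta'}}(z,z)\,d\eta'=\frac{k}{2\pi}\int_{-\delta}^{\delta}e^{-\tfrac{t}{k}\Box^q_{b,k,k\eta}}(z,z)\,d\eta ,
\]
and likewise with $e^{-t\,\cdot\,}$ in place of $e^{-\tfrac{t}{k}\,\cdot\,}$.

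Next I would fix $x_0=(z_0,\theta_0)$ and perform the parabolic rescaling $z=z_0+w/\sqrt k$ from the proof of Theorem~\ref{main2}: rescale the fibres of $T^{*0,q}X$, trivialize $L^k$ by $s^{\otimes k}$ with $\abs{s}^2_{h^L}=e^{-\phi}$, and conjugate away the factor $e^{k\phi/2}$. One checks, exactly as for Theorem~\ref{main2}, that the rescaled operators $\tfrac1k\Box^q_{b,k,k\eta}$, written in the coordinates $w$, converge in the strong resolvent sense, uniformly for $\eta\in[-\delta,\delta]$, to the model Kohn Laplacian $\Box^{(\infty)}_{x_0,\eta}$ on $\mathbb C^n$ attached to the Hermitian form $\dot{\mathcal R}^\phi_{x_0}-2\eta\dot{\mathcal L}_{x_0}$, whose heat kernel on the diagonal is given by the Mehler formula
\[
e^{-t\Box^{(\infty)}_{x_0,\eta}}(0,0)=\frac1{(2\pi)^n}\,\frac{\det\big(\dot{\mathcal R}^\phi_{x_0}-2\eta\dot{\mathcal L}_{x_0}\big)}{\det\big(1-e^{-t(\dot{\mathcal R}^\phi_{x_0}-2\eta\dot{\mathcal L}_{x_0})}\big)}\,e^{-t\omega^\eta_{x_0}} .
\]
Since the dilation $w\mapsto z_0+w/\sqrt k$ carries a Jacobian $k^n$, this rescaling yields, uniformly for $\eta\in[-\delta,\delta]$, $z$ in a compact set, and $t$ in a compact subinterval of $\mathbb R_+$, both a uniform bound $k^{-n}\big|e^{-t\Box^q_{b,k,k\eta}}(z,z)\big|\leq C$ and the pointwise limit $k^{-n}e^{-\tfrac{t}{k}\Box^q_{b,k,k\eta}}(z,z)\to e^{-t\Box^{(\infty)}_{x,\eta}}(0,0)$, where $x=(z,\theta)$. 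Inserting the bound into the displayed diagonal formula (with $e^{-t\,\cdot\,}$) gives $\big|e^{-t\Box^q_{b,k,\leq k\delta}}(x,x)\big|\leq \tfrac{k}{2\pi}\cdot 2\delta\cdot Ck^n\leq C'k^{n+1}$, i.e.\ \eqref{e-gue210608yyda}; inserting the limit into the same formula (with $e^{-\tfrac{t}{k}\,\cdot\,}$) and applying dominated convergence over $\eta\in[-\delta,\delta]$ gives \eqref{asymoptpticS1}. The only difference with Theorem~\ref{main2} is that the frequency $\eta$ now ranges over the compact slab $[-\delta,\delta]$ rather than all of $\mathbb R$, which is precisely why the integral in \eqref{asymoptpticS1} is over $(-\delta,\delta)$.

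The main obstacle is to make the local analysis uniform in the frequency $\eta$ over the whole slab $[-\delta,\delta]$. At values of $\eta$ for which $\dot{\mathcal R}^\phi_{x_0}-2\eta\dot{\mathcal L}_{x_0}$ is degenerate the model operator $\Box^{(\infty)}_{x_0,\eta}$ loses its spectral gap, so the Mehler kernel must be handled by continuity (using $\mu/(1-e^{-t\mu})\to1/t$ as $\mu\to0$) rather than by a gap argument, and one must verify that the error terms in the heat parametrix for $\tfrac1k\Box^q_{b,k,k\eta}$, together with the contribution of the region away from the diagonal, are negligible uniformly in $\eta$. Because $\Box^q_{b,k}$ is only subelliptic — which is exactly the source of the slice operators $\Box^q_{b,k,\eta'}$ degenerating as $|\eta'|\to\infty$ — this uniformity does not follow from classical elliptic heat-kernel bounds; it amounts to keeping the constants in the subelliptic and microlocal estimates used for Theorem~\ref{main2} uniform in the slice parameter $\eta'=k\eta$, $|\eta'|\leq k\delta$. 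Granting this, dominated convergence justifies the interchange of limit and integral and the proof closes exactly as for Theorem~\ref{main2}.
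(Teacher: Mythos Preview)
Your approach differs from the paper's and carries a real gap. You propose to Fourier-decompose along the $\mathbb R$-action first, obtaining a family of elliptic slice operators $\Box^q_{b,k,\eta'}$ on the $2n$-dimensional reduced space, then rescale each slice and integrate over $|\eta'|\leq k\delta$. The paper does not slice: it keeps the full $(2n+1)$-dimensional rescaled operator $\Box^q_{b,(k)}$ on $B_{\log k}$, and replaces the missing $Y(q)$ condition by combining Kohn's subelliptic estimate \emph{with} the $\|T^{s+1}u\|$ term (Proposition~\ref{p-gue210531yyd}) together with the spectral truncation bound $\|T^sA_{(k),\delta}(t)u\|_{kF^*_k\phi}\leq\delta^s\|u\|_{kF^*_k\phi}$ (Lemma~\ref{l-gue210602yyd}). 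This yields uniform $\cali{C}^\infty$ bounds on $A_{(k),\delta}$ directly; the limit kernel $B_\delta$ is then identified with the truncated Heisenberg heat kernel $e^{-t\Box^{q,\delta}_{H_n,\Phi}}$ by verifying the heat equation, the domain condition, and the initial condition $\lim_{t\to0}B_\delta(t)=Q_{[-\delta,\delta]}$ (Theorem~\ref{t-gue210606yyd}).

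The gap in your proposal is that your exact diagonal identity, and the chart $U\cong D\times\mathbb R$, presuppose a \emph{free} $\mathbb R$-action. But the theorem also allows the action to be non-free, and by \eqref{e-gue201109yyd} it then comes from a torus action $\mathbb T^d$: the spectrum of $-iT$ becomes the countable set $\{\sum_jm_j\beta_j:m\in\mathbb Z^d\}$, the canonical chart is only $D\times I$ with $I$ bounded, and the projection $Q_{X,\leq k\delta}$ acquires wrap-around terms $\hat R_{\leq k\delta}$ from the torus characters (Lemma~\ref{l-gue201116yyd}). Your Fourier integral formula is not valid as written in this case; showing that these corrections are asymptotically negligible is precisely what the paper establishes in Lemma~\ref{l-gue210604yydIz} and Proposition~\ref{p-gue210604yydex}, and your outline does not address it. A secondary point: invoking ``exactly as for Theorem~\ref{main2}'' is imprecise, since after slicing the operators $\Box^q_{b,k,\eta'}$ are \emph{elliptic} on the $2n$-dimensional reduced space, so the relevant rescaling analysis is that of \cite[Section 1.6]{MM07} for Kodaira Laplacians rather than the subelliptic CR analysis of Section~\ref{estimate}; in the free case this is in fact an advantage of your route, because the uniformity in $\eta\in[-\delta,\delta]$ you worry about is then routine for elliptic families.
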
 

Note that in Theorem~\ref{main3}, we do not need any Levi curvature assumption. 
As an application of Theorem~\ref{main3}, we get the following Morse inequalities which generalize the results in \cite{HL18} for CR manifolds with $S^1$ action:

\begin{thm}\label{thms1}
With the same assumptions and notations used in Theorem~\ref{main3}, suppose that $X$ is compact. 
		 As $k\to\infty$, we have the following Morse inequalities: for every $q\in\set{0,1,\ldots,n}$, 
	\begin{equation}\label{strongS1}
	\begin{aligned}
	&\sum_{j=0}^q(-1)^{q-j}\dim H_{b,\leq k\delta}^{j}(X,L^k)\\
	& \le\frac{k^{n+1}}{(2\pi)^{n+1}}\sum^q_{j=0}(-1)^{q-j}\int_X\int_{\R_x(j)\bigcap[-\delta,\delta]}\left|\det(\dot{\mathcal R}^\phi_x-2\eta\dot{\mathcal L}_x)\right|d\eta dv_X(x)+o(k^{n+1})
	\end{aligned}
	\end{equation}
	with equality for $q=n$. 
	
	For every $q\in\set{0,1,\ldots,n}$, we have 
	\begin{equation}\label{weakS1}
	\dim H_{b,\leq k\delta}^{q}(X,L^k) \le\frac{k^{n+1}}{(2\pi)^{n+1}}\int_X\int_{\R_x(q)\bigcap[-\delta,\delta]}\left|\det(\dot{\mathcal R}^\phi_x-2\eta\dot{\mathcal L}_x)\right|d\eta dv_X(x)+o(k^{n+1}).
	\end{equation}
\end{thm}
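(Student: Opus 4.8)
The plan is to derive Theorem~\ref{thms1} from Theorem~\ref{main3} exactly as Theorem~\ref{thmmorse} is derived from Theorem~\ref{main2}, namely by feeding the heat kernel asymptotics into the fundamental trace inequality (see \cite[Theorem~1.4]{Bismut87} and \cite[Section~1.6]{MM07}), but working throughout on the low-energy spectral subspaces. Because the $\mathbb R$-action is CR and $L$ is rigid with $\mathbb R$-invariant metric, both $\dbar_b$ and its formal adjoint commute with $-iT$, hence preserve each $L^2_{(0,\bullet),\le k\delta}(X,L^k)$; so the partial $\dbar_b$-complex $(\Omega^{0,\bullet}_{\le k\delta}(X,L^k),\dbar_b)$ has associated Laplacian $\Box^\bullet_{b,k,\le k\delta}$, and Hodge theory gives $H^q_{b,\le k\delta}(X,L^k)\cong\ke\Box^q_{b,k,\le k\delta}$. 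Since $X$ is compact and the non-negative operator $e^{-t\Box^q_{b,k,\le k\delta}}$ has a smooth kernel, it is trace class with $\Tr e^{-t\Box^q_{b,k,\le k\delta}}=\int_X\operatorname{tr}e^{-t\Box^q_{b,k,\le k\delta}}(x,x)\,dv_X(x)$, where $\operatorname{tr}$ is the fibrewise trace over $T^{*0,q}_xX$. Applying the trace inequality to this complex, one gets, for every $t>0$ and every $q\in\{0,\dots,n\}$,
\[
\sum_{j=0}^q(-1)^{q-j}\dim H^j_{b,\le k\delta}(X,L^k)\ \le\ \sum_{j=0}^q(-1)^{q-j}\Tr e^{-\frac tk\Box^j_{b,k,\le k\delta}},
\]
with equality when $q=n$ (McKean--Singer).

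I would then divide by $k^{n+1}$ and let $k\to\infty$. The pointwise asymptotics \eqref{asymoptpticS1}, together with the uniform bound \eqref{e-gue210608yyda} supplying a $k$-independent dominating function on the compact $X$, give by dominated convergence
\[
\lim_{k\to\infty}k^{-(n+1)}\Tr e^{-\frac tk\Box^q_{b,k,\le k\delta}}=\frac{1}{(2\pi)^{n+1}}\int_X\int_{-\delta}^{\delta}\operatorname{tr}\!\left(\frac{\det(\dot{\mathcal R}^\phi_x-2\eta\dot{\mathcal L}_x)}{\det\big(1-e^{-t(\dot{\mathcal R}^\phi_x-2\eta\dot{\mathcal L}_x)}\big)}e^{-t\omega_x^\eta}\right)d\eta\,dv_X(x).
\]
Then comes a fibrewise linear algebra computation: diagonalising $A:=\dot{\mathcal R}^\phi_x-2\eta\dot{\mathcal L}_x$ with eigenvalues $\mu_1(x),\dots,\mu_n(x)$ and using \eqref{e-gue210607yydt}, one finds
\[
\sum_{j=0}^q(-1)^{q-j}\operatorname{tr}\!\left(\frac{\det A}{\det(1-e^{-tA})}e^{-t\omega_x^\eta}\right)=\sum_{j=0}^q(-1)^{q-j}\sum_{\substack{J\subseteq\{1,\dots,n\}\\ |J|=j}}\ \prod_{l\in J}\frac{\mu_l(x)e^{-t\mu_l(x)}}{1-e^{-t\mu_l(x)}}\ \prod_{l\notin J}\frac{\mu_l(x)}{1-e^{-t\mu_l(x)}},
\]
where on the left $\operatorname{tr}$ is taken over $T^{*0,j}_xX$. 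Each factor is positive and monotone in $t$ for $t\ge1$, so I may let $t\to\infty$ under the integral; since $\mu/(1-e^{-t\mu})\to\max(\mu,0)$ and $\mu e^{-t\mu}/(1-e^{-t\mu})\to\max(-\mu,0)$, only the term $J=\{l:\mu_l(x)<0\}$ survives and the whole sum tends to $\sum_{j=0}^q(-1)^{q-j}|\det(\dot{\mathcal R}^\phi_x-2\eta\dot{\mathcal L}_x)|\,\mathbf 1_{\R_x(j)}(\eta)$. Taking $\limsup_{k\to\infty}$ on the left of the trace inequality and then $t\to\infty$ on the right yields \eqref{strongS1}; at $q=n$ the trace inequality is an equality and $\sum_{j=0}^n(-1)^{n-j}\operatorname{tr}(\cdots)=(-1)^n\det A$ is $t$-independent and agrees almost everywhere with $\sum_{j=0}^n(-1)^{n-j}|\det A|\,\mathbf 1_{\R_x(j)}(\eta)$, so \eqref{strongS1} becomes an equality for $q=n$. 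Finally \eqref{weakS1} follows by adding \eqref{strongS1} at degrees $q$ and $q-1$ (for $q\ge1$; for $q=0$ it \emph{is} \eqref{strongS1}), the contributions of $H^j_{b,\le k\delta}(X,L^k)$ with $j<q$ cancelling.

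The bookkeeping above is routine once the analytic inputs are in place, so the main obstacle lies upstream. Two points need care. First, one must know that $e^{-t\Box^q_{b,k,\le k\delta}}$ is trace class with trace equal to the integral of its pointwise trace \emph{without} any $Y(q)$ assumption; this rests on the microlocal structure of the heat kernel established while proving Theorem~\ref{main3} (smoothness of the kernel, together with positivity, gives the trace-class property on compact $X$). Second, the two successive passages to the limit --- first $k\to\infty$, then $t\to\infty$ --- must be controlled uniformly enough on $I\times X$; both reduce to dominated convergence once \eqref{e-gue210608yyda} and \eqref{asymoptpticS1} are available, the $t$-uniformity for $t\ge1$ coming from the monotonicity in $t$ of the functions $t\mapsto s/(1-e^{-ts})$. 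I expect interchanging these two limits cleanly to be the only genuine subtlety in deducing the theorem.
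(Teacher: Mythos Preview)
Your proposal is correct and follows essentially the same route as the paper, which simply says that Theorem~\ref{thms1} is obtained by repeating the proof of Theorem~\ref{thmmorse} with Theorem~\ref{main3} in place of Theorem~\ref{main2}; you have in fact written out more detail than the paper does, including the McKean--Singer argument for the equality at $q=n$ and the explicit linear-algebra identification of the $t\to\infty$ limit. One small remark: your phrase ``positive and monotone in $t$ for $t\ge1$'' is a bit loose---what you actually use (and what suffices) is that each factor $\mu/(1-e^{-t\mu})$ and $\mu e^{-t\mu}/(1-e^{-t\mu})$ is bounded, uniformly for $t\ge1$, by its value at $t=1$, which gives the dominating function over the compact range $[-\delta,\delta]\times X$; also, the paper obtains the weak inequality directly from $\dim H^q_{b,\le k\delta}\le\Tr e^{-\frac{t}{k}\Box^q_{b,k,\le k\delta}}$ rather than by adding the strong inequalities at degrees $q$ and $q-1$, but your route is equally valid.
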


Our paper is organized in the following way. In Section~\ref{sec:prelim}, 
we collect some standard notations, terminology, definitions, and statements 
we use throughout. 
In Section~\ref{estimate}, we introduce the Kohn Laplacian $\Box_{H_n,\Phi}^q$ on the Heisenberg group and find the relationship between the smooth kernel of heat operators $e^{-\frac{t}{k}\Box_{b,k}^q}$ and $e^{-t\Box_{H_n,\Phi}^q}$ in local. This process is achieved by comparing the scaled Kohn Laplacian $\Box_{\rho,(k)}^q$ and $\Box_{H_n,\Phi}^q$. We then compute $e^{-t\Box_{H_n,\Phi}^q}(0,0)$ explicitly and establish Theorem~\ref{main2} in Section~\ref{heatasymptotic}.  
In Section~\ref{morseinequality}, we obtain Morse inequalities on CR manifolds by using Theorem~\ref{main2}.
In Section~ \ref{S1}, we show that the same conclusions can be drawn when $X$ admits a transversal CR $\mathbb R$-action, without any assumption of the Levi form. 

Throughout this paper, we will use $C$($C_t$) to denote constants(the constants depend on $t$) which might not be the same in different appearances.

\section{Preliminary}\label{sec:prelim}

\subsection{Standard notations} \label{s-ssna}
We shall use the following notations: $\mathbb N=\{1,2,\ldots\}$
is the set of natural numbers, $\mathbb N_0=\mathbb N\cup\set{0}$, $\mathbb R$ 
is the set of real numbers, $\mathbb R_+:=\{x\in\mathbb R\mid x>0\}$,
$\overline{\mathbb R_+}:=\{x\in\mathbb R\mid x\geq0\}$. 
For a multiindex $\alpha=(\alpha_1,\ldots,\alpha_n)\in\mathbb N_0^n$
we denote by $\abs{\alpha}=\alpha_1+\ldots+\alpha_n$ its norm and by $l(\alpha)=n$ its length.
For $m\in\mathbb N$, write $\alpha\in\set{0,1,\ldots,m}^n$ if $\alpha_j\in\set{0,1,\ldots,m}$, 
$j=1,\ldots,n$. $\alpha$ is strictly increasing if $\alpha_1<\alpha_2<\ldots<\alpha_n$. For $x=(x_1,\ldots,x_n)$ we write
\[
\begin{split}
&x^\alpha=x_1^{\alpha_1}\ldots x^{\alpha_n}_n,\\
& \partial_{x_j}=\frac{\partial}{\partial x_j}\,,\quad
\partial^\alpha_x=\partial^{\alpha_1}_{x_1}\ldots\partial^{\alpha_n}_{x_n}=\frac{\partial^{\abs{\alpha}}}{\partial x^\alpha}\,,\\
&D_{x_j}=\frac{1}{i}\partial_{x_j}\,,\quad D^\alpha_x=D^{\alpha_1}_{x_1}\ldots D^{\alpha_n}_{x_n}\,.
\end{split}
\]
Let $z=(z_1,\ldots,z_n)$, $z_j=x_{2j-1}+ix_{2j}$, $j=1,\ldots,n$, be coordinates of $\mathbb C^n$.
We write
\[
\begin{split}
&z^\alpha=z_1^{\alpha_1}\ldots z^{\alpha_n}_n\,,\quad\ol z^\alpha=\ol z_1^{\alpha_1}\ldots\ol z^{\alpha_n}_n\,,\\
&\partial_{z_j}=\frac{\partial}{\partial z_j}=
\frac{1}{2}\Big(\frac{\partial}{\partial x_{2j-1}}-i\frac{\partial}{\partial x_{2j}}\Big)\,,\quad\partial_{\overline z_j}=
\frac{\partial}{\partial\overline z_j}=\frac{1}{2}\Big(\frac{\partial}{\partial x_{2j-1}}+i\frac{\partial}{\partial x_{2j}}\Big),\\
&\partial^\alpha_z=\partial^{\alpha_1}_{z_1}\ldots\partial^{\alpha_n}_{z_n}=\frac{\partial^{\abs{\alpha}}}{\partial z^\alpha}\,,\quad
\partial^\alpha_{\overline z}=\partial^{\alpha_1}_{\overline z_1}\ldots\partial^{\alpha_n}_{\overline z_n}=
\frac{\partial^{\abs{\alpha}}}{\partial\overline z^\alpha}\,.
\end{split}
\]
For $j, s\in\mathbb Z$, set $\delta_{j,s}=1$ if $j=s$, $\delta_{j,s}=0$ if $j\neq s$.

Let $W$ be a $\cali{C}^\infty$ paracompact manifold.
We let $TW$ and $T^*W$ denote the tangent bundle of $W$
and the cotangent bundle of $W$ respectively.
The complexified tangent bundle of $W$ and the complexified cotangent bundle of $W$ are denoted by $\mathbb C TW$
and $\mathbb C T^*W$, respectively. Write $\langle\,\cdot\,,\cdot\,\rangle$ to denote the pointwise
duality between $TW$ and $T^*W$.
We extend $\langle\,\cdot\,,\cdot\,\rangle$ bilinearly to $\mathbb C TW\times\mathbb C T^*W$.
Let $G$ be a $\cali{C}^\infty$ vector bundle over $W$. The fiber of $G$ at $x\in W$ will be denoted by $G_x$.
Let $E$ be another vector bundle over $W$. We write
$E\boxtimes G^*$ to denote the vector bundle over $W\times W$ with fiber over $(x, y)\in W\times W$
consisting of the linear maps from $G_y$ to $E_x$.  Let $Y\subset W$ be an open set. 
From now on, the spaces of distribution sections of $G$ over $Y$ and
smooth sections of $G$ over $Y$ will be denoted by $\mathscr D'(Y, G)$ and $\cali{C}^\infty(Y, G)$ respectively.
Let $\mathscr E'(Y, G)$ be the subspace of $\mathscr D'(Y, G)$ whose elements have compact support in $Y$ and 
let $\cali{C}^\infty_c(Y, G)$ be the subspace of $\cali{C}^\infty(Y, G)$ whose elements have compact support in $Y$.

\subsection{Set up}\label{s-gue210504yyd}

In this section, we introduce the basic notations on CR manifolds. It is worthwhile to mention that most of our notations follow the fundamental setting of \cite{HM12} and \cite{HL18}.

Let $(X,T^{1,0}X)$ be an orientable $2n+1$-dimensional (possibly non-compact) CR manifold, equipped with a 
Hermitian metric $\langle\,\cdot\,|\,\cdot\,\rangle$, so that $T^{1,0}X$ is orthogonal to $T^{0,1}X:=\ol{T^{1,0}X}$, $n\geq1$. Then there is a real non-vanishing vector field $T$ on $X$ which is pointwise orthogonal to $T^{1,0}X\oplus T^{0,1}X$, $\|T\|^2=\langle\,T\mid T\,\rangle=1$. We then have the following orthogonal decompositions:
\[
\C TX=T^{1,0}X\oplus T^{0,1}X\oplus\{\lambda T\mid\lambda\in\C\}
\]
and 
\[
\C T^*X=T^{*1,0}X\oplus T^{*0,1}X\oplus\{\lambda \omega_0\mid\lambda\in\C\},
\]
where 
$T^{*1,0}X$, $T^{*0,1}X$ and $\set{\lambda\omega_0\mid\lambda\in\mathbb C}$ are the dual bundles of $T^{1,0}X$,  $T^{0,1}X$ and $\set{\lambda T\mid\lambda\in\mathbb C}$, respectively, $\langle\,\omega_0\,,\,T\,\rangle=-1$. The Hermitian metric on $\C T^*X$ is induced by $\langle\,\cdot\,|\,\cdot\,\rangle$ by duality. Hereafter, we denote all these induced metrics by $\langle\,\cdot\,|\,\cdot\,\rangle$. Note the vector field $T$ here is first choosed locally, then by restriction to the coordinate transformations which preserve $T^{1,0}X$ and orientation, a local choice of sign in the direction of $T$ can extend $T$ to a global one.

\begin{defin}\label{d-gue210607ycd}
	For $x\in X$, the Hermitian quadratic form $\mathcal L_x$ on $T^{1,0}_xX$ defined by 
	\begin{equation}
	\mathcal L_x(U,\ol V)=\frac{1}{2i}\left\langle[\mathcal U,\ol{\mathcal V}](x),\omega_0(x)\right\rangle
	\end{equation} 
	is called the Levi form associated with $\omega_0$,
	where $U$, $V\in T^{1,0}_xX$, $\mathcal U$ and $\mathcal V$ are smooth sections on $T^{1,0}X$ such that $\mathcal U(x)=U$, $\mathcal V(x)=V$. Note that the Levi form $\mathcal L_x$ does not depends on the choices of $\mathcal U$ and $\mathcal V$.
\end{defin}

\noindent Let $x\in X$. Locally there is an orthonormal basis ${\mathcal U_1,\cdots,\mathcal U_n}$ of $T^{1,0}X$ such that $\mathcal L_x$ is diagonal in this basis, $\mathcal L_x(\mathcal U_j,\mathcal U_l)=\delta_{j,l}\lambda_j(x)$, $j, l=1,\ldots,n$. The entries $\{\lambda_1(x),\cdots,\lambda_n(x)\}$ are called the eigenvalues of the Levi form at $x\in X$.

\begin{defin}\label{d-gue210607yyd}
	Let $X$ be an oriented CR manifold of real dimension $2n+1$ with $n\ge1$. $X$ is said to satisfy condition $Y(q)$, $0\le q\le n$, if the Levi form has at least either $\max(n+1-q, q+1)$ eigenvalues of the same sign or $\min(n+1-q, q+1)$ pairs of eigenvalues of the opposite signs at every point on $X$.
\end{defin}

Define by $T^{*0,q}X:=\Lambda^qT^{*0,1}X$ the bundle of $(0,q)$-forms on $X$, $0\le q\le n$. For $u\in T^{*0,r}X$, let $(u\wedge)^*: T^{*0,q+r}X\to T^{*0,q}X$ be the adjoint of $u\wedge: T^{*0,q}X\to T^{*0,q+r}X$ with respect to $\langle\,\cdot\,|\,\cdot\,\rangle$, that is
\[
\langle\,u\wedge w\,|\,v\,\rangle=\langle\,w\,|\,(u\wedge)^*v\,\rangle
\]
for all $w\in T^{*0,q}X$ and $v\in T^{*0,q+r}X$. We write $|u|^2=\langle\,u\,|\,u\,\rangle$.

Let $D\subset X$ be an open set. We let $\omz^{0,q}(D)$ and $\omz_c^{0,q}(D)$ denote the space of smooth sections of $T^{*0,q}X$ over $D$ and the 
subspace of $\omz^{0,q}(D)$ whose elements have compact support in $D$, respectively. Let 
$$\dbar_b:\omz^{0,q}(X)\to \omz^{0,q+1}(X)$$
be the tangential Cauchy-Riemann operator. We say that $u\in\cali{C}^\infty(X)$ is a CR function if $\dbar_b u=0$. Moreover, if $L$ is a complex line bundle over $X$, $L$ is called a CR line bundle if its transition functions are CR.

Now, we let $(L,h^L)$ be a CR complex line bundle over $X$, where $h^L$ is the Hermitian fiber metric on $L$ and its local weight is $\phi$. Namely, let $s$ be a local CR trivializing section of $L$ on $D$, then locally,
\begin{equation}\label{trivia}
|s(x)|^2_{h^L}=e^{-\phi(x)},\quad x\in D.
\end{equation}

For $k>0$, let $L^k$ be the $k$-th tensor power of the line bundle $L$ over X, then $h^L$ induces a Hermitian fiber metric $h^{L^k}$ on $L^k$ and $s^k$ is a local CR trivializing section of $L^k$. For $D\subset X$, let $\omz^{0,q}(D,L^k)$ denote the space of smooth sections of $T^{*0,q}X\otimes L^k$ over $D$ and $\omz_c^{0,q}(D,L^k)$ be the subspace of $\omz^{0,q}(D,L^k)$ whose elements have compact support in $D$. For $u\in\omz^{0,q}(D,L^k)$, we sometimes write $|u(x)|^2_{h^{L^k}}$ simply as $|u|^2$.

Let $$\dbar_{b,k} \colon \omz^{0,q}(X,L^k)\to \omz^{0,q+1}(X,L^k)$$
 be the tangential Cauchy-Riemann operator acting on forms with values in $L^k$ such that
\begin{equation}\label{dbarbk}
\dbar_{b,k}(s^ku)=s^k\dbar_b u,
\end{equation}
where $s$ is a local CR trivializing section of $L$ on an open set $D$ and $u\in\omz^{0,q}(D)$. Then the $q$-th $\dbar_{b,k}$-cohomology is given by
\begin{equation}
H_b^{q}(X,L^k):=\ke \dbar_{b,k}/\im \dbar_{b,k},\quad 0\le q\le n.
\end{equation}
We sometimes simply write $\dbar_b$ to denote $\dbar_{b,k}$. 

Let $dv_X(x)$ be the volume form on $X$ induced by the fixed Hermitian metric on $\C TX$, then we get natural global $L^2$ inner products $(\,\cdot\,\mid\,\cdot\,)$ and $(\,\cdot\mid\cdot\,)_{h^{L^k}}$ on $\omz^{0,q}(X)$ and $\omz^{0,q}(X,L^k)$ respectively. Define by $L^2_{(0,q)}(X)$ the completion of $\omz_c^{0,q}(X)$ with respect to $(\,\cdot\mid\cdot\,)$ and  $L^2_{(0,q)}(X,L^k)$ the one of $\omz_c^{0,q}(X,L^k)$ with respect to $(\,\cdot\mid\cdot\,)_{h^{L^k}}$. The associated norms on $L^2_{(0,q)}(X)$ and  $L^2_{(0,q)}(X,L^k)$ are denoted by $\|\cdot\|$ and $\|\cdot\|_{h^{L^k}}$, respectively.

We extend $\dbar_{b,k}$ to $L^2_{(0,q)}(X,L^k)$: 
\[\dbar_{b,k}: {\rm Dom\,}\dbar_{b,k}\subset L^2_{(0,q)}(X,L^k)\To L^2_{(0,q+1)}(X,L^k),\]
where ${\rm Dom\,}\dbar_{b,k}=\set{u\in L^2_{(0,q)}(X,L^k)\mid \dbar_{b,k}u\in L^2_{(0,q+1)}(X,L^k)}$. 
Let 
$$\dbarstar_{b,k}:{\rm Dom\,}\dbarstar_{b,k}\subset L^2_{(0,q+1)}(X,L^k)\to L^2_{(0,q)}(X,L^k)$$
be the Hilbert space adjoint of $\dbar_{b,k}$ with respect to $(\,\cdot\mid\cdot\,)_{h^{L^k}}$. The (Gaffney extension) of Kohn Laplacian with values in $L^k$ is then given by
\begin{equation}\label{e-gue210228yyd}
\begin{split}
&\Box^q_{b,k}: {\rm Dom\,}\Box^q_{b,k}\subset L^2_{(0,q)}(X,L^k)\To L^2_{(0,q)}(X,L^k),\\
& {\rm Dom\,}\Box^q_{b,k}=\set{u\in{\rm Dom\,}\dbar_{b,k}\bigcap{\rm Dom\,}\dbarstar_{b,k} \mid \dbar_{b,k}u\in{\rm Dom\,}\dbarstar_{b,k}, \dbarstar_{b,k} u\in{\rm Dom\,}\dbar_{b,k}},\\
&\Box^q_{b,k}=\dbarstar_{b,k}\dbar_{b,k}+\dbar_{b,k}\dbarstar_{b,k}\ \ \mbox{on ${\rm Dom\,}\Box^q_{b,k}$}. 
\end{split}
\end{equation}

Let $s\in \N_0$, denote by $\|\cdot\|_s$ the standard Sobolev norm for sections of $L^k$ with respect to $(\,\cdot\mid\cdot\,)_{h^{L^k}}$ of order $s$. The following subelliptic estimates follow from Kohn's $L^2$ estimates\cite[Theorem 8.4.2]{K65}:

\begin{thm}\label{kohn}
Let $s\in\mathbb N_0$. There exists a constant $C_{s,k}>0$ such that 
	\begin{equation}\label{L2}
	\|u\|_{s+1}\le C_{s,k}\left(\|\Box_{b,k}^q u\|_s+\|u\|+\|T^{s+1}u\|\right),\ \ \mbox{for every $u\in\omz^{0,q}(X,L^k)$}. 
	\end{equation}
\end{thm}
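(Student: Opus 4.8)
The plan is to follow Kohn's argument: reduce to a model estimate on a coordinate chart, prove the case $s=0$ (the basic $L^2$-estimate) by integration by parts, and bootstrap in $s$. For the reduction, since $[\Box_{b,k}^q,\chi]$ is a differential operator of order $1$ and $[\,T^{s+1},\chi\,]$ of order $s$ for any cutoff $\chi$, a partition of unity subordinate to a cover of $X$ by coordinate charts over which $L$ admits a CR trivializing section reduces the claim to an estimate for $u\in\Omega^{0,q}_c(D,L^k)$, $D$ such a chart (if $X$ is non-compact I would first restrict to a relatively compact open subset). In $D$ I would take coordinates $(x_1,\dots,x_{2n+1})$ with $T=\partial/\partial x_{2n+1}$ (flow coordinates of $T$) and a local orthonormal frame $Z_1,\dots,Z_n$ of $T^{1,0}X$, so that $\{Z_j,\overline Z_j\}_{j=1}^n\cup\{T\}$ frames $\mathbb C TX$; trivializing $T^{*0,q}X\otimes L^k$ by $s^k$ and the dual coframe, $\dbar_{b,k}=\sum_j\overline\omega^j\wedge\overline Z_j+(\text{order }0)$ and $\dbarstar_{b,k}=-\sum_j(\overline\omega^j\wedge)^{\star}Z_j+(\text{order }0)$, so $\Box_{b,k}^q$ becomes a second-order system with scalar principal symbol $\sum_{j=1}^n|\langle\xi,Z_j\rangle|^2\,\mathrm{Id}$. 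The structural fact I would exploit throughout is that $\sum_j|\langle\xi,Z_j\rangle|^2$ is comparable on $D$ to $|\xi'|^2=\xi_1^2+\dots+\xi_{2n}^2$: $\Box_{b,k}^q$ is scalar elliptic in every codirection except the one conormal to $T$, i.e. elliptic in the variables $x'$ with $x_{2n+1}$ entering only as a parameter.

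For $s=0$ I would integrate by parts in $(\Box_{b,k}^qu\mid u)=\|\dbar_{b,k}u\|^2+\|\dbarstar_{b,k}u\|^2$ and use $[\overline Z_j,Z_k]=(\text{Levi form})\,T+(\text{first order in }Z,\overline Z)+(\text{order }0)$, i.e. the Bochner--Kodaira--Morrey--Kohn identity for $\dbar_b$ on $(0,q)$-forms. Bounding the torsion terms crudely and the Levi terms by $\|Tu\|\,\|u\|$ gives $\sum_j(\|\overline Z_ju\|^2+\|Z_ju\|^2)\le C(\|\Box_{b,k}^qu\|\,\|u\|+\|u\|^2+\|Tu\|\,\|u\|)$; adding $\|Tu\|^2$, using that $\{Z_j,\overline Z_j,T\}$ span, and applying the Cauchy--Schwarz inequality then yields $\|u\|_1\le C(\|\Box_{b,k}^qu\|_0+\|u\|+\|Tu\|)$. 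It is precisely at the step of estimating the Levi terms that condition $Y(q)$, were it assumed, would let one drop $\|Tu\|$; making no curvature hypothesis, I keep it, and it is this term that, after iteration, produces the $\|T^{s+1}u\|$ in \eqref{L2}.

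For $s\ge1$ I would bootstrap by induction: apply the $s=0$ estimate (and its analogue with higher Sobolev norms, which follows from the transverse elliptic regularity above) to the derivatives $T^bu$, $0\le b\le s+1$, and commute past $\Box_{b,k}^q$. Because $T$ is a coordinate vector field, $[\Box_{b,k}^q,T^b]$ only differentiates the coefficients of $\Box_{b,k}^q$ in $x_{2n+1}$ and has order $\le2$; elliptic regularity in $x'$ controls all $x'$-derivatives, the commutators contribute only lower-order terms in $u$ (absorbed via Ehrling's inequality $\|u\|_s\le\varepsilon\|u\|_{s+1}+C_\varepsilon\|u\|$), and the only terms left uncontrolled and unabsorbed are the iterated bad derivatives $T^{s+1}u$, which remain on the right. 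Reassembling gives $\|u\|_{s+1}\le C(\|\Box_{b,k}^qu\|_{s-1}+\|u\|+\|T^{s+1}u\|)$, hence \eqref{L2}. I expect this bootstrap to be the main obstacle: one must organize the induction so that no commutator ever forces more than $s+1$ raw $T$-derivatives onto the right-hand side and so that every sub-top-order contribution is genuinely absorbable. This works because $\Box_{b,k}^q$ is elliptic transverse to $T$ and $T$ can be taken to be a coordinate vector field --- so that commuting with powers of $T$ neither raises the total order nor creates new $T$-derivatives --- but the bookkeeping is delicate, and is carried out in detail in \cite[Theorem 8.4.2]{K65}.
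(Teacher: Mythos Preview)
The paper does not give its own proof of this theorem: it is stated as a direct consequence of Kohn's $L^2$ estimates and simply cites \cite[Theorem 8.4.2]{K65}. Your proposal is a faithful sketch of precisely that classical argument---localize, prove the basic estimate via the Bochner--Kodaira--Morrey--Kohn identity, then bootstrap by commuting with powers of $T$ and exploiting transverse ellipticity---and you end by citing the same reference, so there is nothing to compare.

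One small caution on the sketch itself: the claim that $[\Box_{b,k}^q,T^b]$ ``has order $\le 2$'' is not literally right for $b>1$ (iterated commutators pick up more derivatives); what is true, and what makes the bootstrap work, is that $[\Box_{b,k}^q,T^b]$ involves at most $b-1$ factors of $T$ composed with a second-order operator elliptic in the transverse directions, so the total $T$-count never exceeds the $T^{s+1}u$ already on the right. This is exactly the ``delicate bookkeeping'' you flag at the end, and Kohn's original proof handles it; just be aware that the order count as stated would not survive a careful reading.
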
 

\begin{thm}\label{t-gue210228yyd}
Assume that $Y(q)$ holds on $X$. Let $s\in\mathbb N_0$. There exists a constant $C_{s,k}>0$ such that 
	\begin{equation}\label{e-gue210228yydI}
	\|u\|_{s+1}\le C_{s,k}\left(\|\Box_{b,k}^q u\|_s+\|u\|\right),\ \ \mbox{for every $u\in\omz^{0,q}(X,L^k)$}. 
	\end{equation}
\end{thm}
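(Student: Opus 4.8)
The plan is to deduce \eqref{e-gue210228yydI} from Kohn's subelliptic estimate under condition $Y(q)$; the only new feature here is the fixed twist by $L^k$, which --- since $C_{s,k}$ is allowed to depend on $k$ --- is inessential: locally trivializing $L^k$ by a CR section $s^k$ with $\abs{s^k}^2_{h^{L^k}}=e^{-k\phi}$ turns $\Box^q_{b,k}$ into a fixed second order differential operator with smooth coefficients, the weight $k\phi$ entering only through terms of order $\le 1$. So I would first reduce, via a partition of unity, to a local estimate $\norm{u}_{s+1}\le C(\norm{\Box^q_{b,k}u}_{s-1}+\norm{u})$ for $u\in\omz^{0,q}_c(D,L^k)$ with $D$ a small coordinate ball; the commutators $[\chi_j,\Box^q_{b,k}]$ produced by the partition of unity have order one and are absorbed into $\epsilon\norm{u}_{s+1}+C_\epsilon\norm{u}$ by interpolation. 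Note that this local estimate, carrying $\norm{\Box^q_{b,k}u}_{s-1}$ on the right, is formally stronger than \eqref{e-gue210228yydI}.

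The heart of the matter is the basic half-estimate: under $Y(q)$,
\begin{equation*}
\norm{u}^2_{1/2}\le C_D\big(\norm{\dbar_{b,k}u}^2_{h^{L^k}}+\norm{\dbarstar_{b,k}u}^2_{h^{L^k}}+\norm{u}^2_{h^{L^k}}\big),\qquad u\in\omz^{0,q}_c(D,L^k).
\end{equation*}
I would prove this following Folland--Kohn \cite{FK72}: integrate by parts to write $\norm{\dbar_{b,k}u}^2+\norm{\dbarstar_{b,k}u}^2$ as the $L^2$-norm of the ``good'' horizontal derivatives of $u$ plus the curvature term $\sum_{j,\ell}\mathcal L_x(\cdot,\cdot)\,\ol\omega^j\wedge(\ol\omega^\ell\wedge)^\star$ acting on $T^{*0,q}X$, modulo operators of order $\le 1$ and operators whose symbols vanish on the characteristic set $\Sigma=\{(x,\lambda\omega_0(x)):\lambda\ne 0\}$ of $\Box^q_b$; then invoke the sharp G\r{a}rding inequality. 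Condition $Y(q)$ is precisely the statement that the curvature operators contributed by the two half-lines $\Sigma^\pm$ of $\Sigma$ together form a positive definite operator on $T^{*0,q}_xX$ modulo symbols vanishing on $\Sigma$; this supplies the one missing half-derivative, in the transversal ($T$-)direction, the horizontal directions being already controlled.

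From the half-estimate I would bootstrap to the local estimate by the standard tangential-mollifier argument: apply it to $\Lambda^{s+1/2}\chi u$, where $\Lambda$ is a tangential pseudodifferential operator of order one and $\chi$ a cutoff, commute $\Box^q_{b,k}$ past $\Lambda^{s+1/2}$ and past $\chi$, control the commutator errors by interpolation using the gain, and use interior elliptic regularity of $\Box^q_b$ in the horizontal directions, where its symbol is elliptic. Alternatively one can organize the bootstrap around Theorem~\ref{kohn}, which already gives $\norm{u}_{s+1}\le C_{s,k}(\norm{\Box^q_{b,k}u}_s+\norm{u}+\norm{T^{s+1}u})$, so that it suffices to absorb $\norm{T^{s+1}u}$: splitting $u=P^+u+P^-u+P^0u$ with $P^\pm$ conic near $\Sigma^\pm$ and $P^0$ supported where $\Box^q_b$ is elliptic, one bounds $\norm{T^{s+1}P^0u}$ by ellipticity and $\norm{T^{s+1}P^\pm u}\le\norm{P^\pm u}_{s+1}$ by the bootstrapped half-estimate localized to $\Sigma^\pm$, whence $\norm{T^{s+1}u}\le\epsilon\norm{u}_{s+1}+C_\epsilon(\norm{\Box^q_{b,k}u}_s+\norm{u})$, which we absorb.

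The main obstacle is the half-estimate and its microlocalized, bootstrapped form: tracking the sign of the matrix-valued curvature operator on $(0,q)$-forms and controlling the many lower-order and characteristic-negligible error terms (sharp G\r{a}rding for a matrix symbol, commutators of cutoffs with $\dbar_{b,k}$, separating tangential from normal derivatives). All of this is classical (Kohn \cite{K65}, Folland--Kohn \cite{FK72}); in our situation the twist $L^k$ and the weight $k\phi$ only generate additional $k$-dependent terms of order $\le 1$, absorbed as above, so no analysis beyond a careful transcription is needed.
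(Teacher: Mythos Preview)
Your outline is correct and is precisely the classical Kohn/Folland--Kohn argument. Note, however, that the paper does not supply its own proof of this theorem: it is stated as a known consequence of Kohn's $L^2$ estimates, with the text immediately preceding and following pointing to \cite[Theorem 8.4.2]{K65}, \cite[Theorem 5.4.11--12]{FK72}, and \cite[Corollary 8.4.7--8]{ChenShaw99}. So there is nothing to compare --- your sketch simply unpacks what those references contain, and your observation that the $L^k$-twist is harmless (since $C_{s,k}$ may depend on $k$) is exactly the point.
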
 

From \eqref{e-gue210228yydI}, we see that  if $Y(q)$ holds, then $\Box_{b,k}^q$ is hypoelliptic, has compact resolvent and the strong Hodge decomposition holds. 
We refer the reader to \cite[Theorem 7.6]{K65}, \cite[Theorem 5.4.11-12]{FK72}, \cite[Corollary 8.4.7-8 ]{ChenShaw99} for relevant material. \\
Write 
\begin{equation}
\mathscr{H}_b^q(X,L^k)=\ke \Box_{b,k}^q.
\end{equation}
Then we have
\begin{equation}\label{e-gue210607yyd}
\dim\mathscr{H}_b^q(X,L^k)<\infty,\quad \mathscr{H}_b^q(X,L^k)\cong H_b^q(X,L^k)
\end{equation}
 if $Y(q)$ holds. 
 
 Since $\Box^q_{b,k}$ is non-negative and self-adjoint, the heat operator 
$e^{-t\Box^q_{b,k}}: L^2_{(0,q)}(X,L^k)\To{\rm Dom\,}\Box^q_{b,k}$, $t>0$, exists. Let 
\[A_k(t,x,y)=e^{-t\Box^q_{b,k}}(x,y)\in\mathscr D'(\mathbb R_+\times X\times X, (T^{*0,q}X\otimes L^k)\boxtimes(T^{*0,q}X\otimes L^k)^*)\]
 be the distribution kernel of $e^{-t\Box^q_{b,k}}$ with respect to $dv_X(x)$. 
 We also write $A_k(t)$ to denote $e^{-t\Box^q_{b,k}}$. 
 If $Y(q)$ holds, then 
 \[A_k(t,x,y)\in\cali{C}^\infty(\mathbb R_+\times X\times X, (T^{*0,q}X\otimes L^k)\boxtimes(T^{*0,q}X\otimes L^k)^*).\]
 One of the main goal of this work is to study the asymptotic behavior of 
 $A_k(\frac{t}{k},x,x)$ as $k\To+\infty$ when $Y(q)$ holds. Note that $A_k(\frac{t}{k})$ satisfies the following 
 \begin{equation}\label{e-gue210531ycda}
\begin{cases}
\left(\frac{\pa}{\pa t}+\frac{1}{k}\Box^q_{b,k}\right)A_k(\frac{t}{k})=0,\\
\lim_{t\To0}A_k(\frac{t}{k})=I\ \ \mbox{on $L^2_{(0,q)}(X,L^k)$}.
\end{cases}
\end{equation}

Now, assume that $Y(q)$ holds. We use the canonical identification ${\rm End\,}(L^k)=\mathbb C$, especially 
\begin{equation}\label{e-gue210303yyd}
A_k(\frac{t}{k},x,y)\in\cali{C}^\infty(\mathbb R_+\times X\times X, T^{*0,q}X\boxtimes(T^{*0,q}X)^*).
\end{equation}
Let $s$ be a local CR trivializing section of $L$ defined on an open set $D$ of $X$, $\abs{s}^2_{h^L}=e^{-\phi}$. We have the unitary identification: 
\[\begin{split}
U: L^2_{(0,q)}(D,L^k)&\To L^2_{(0,q)}(D),\\
s^k\otimes \hat u&\To e^{-\frac{k\phi}{2}} \hat u.
\end{split}\]
There exists $A_{k,s}(t,x,y)\in\cali{C}^\infty(\mathbb R_+\times D\times D, T^{*0,q}X\boxtimes(T^{*0,q}X)^*)$ such that 
\begin{equation}\label{e-gue210303yydI}
(e^{-t\Box^q_{b,k}}u)(x)=s^k(x)\otimes e^{\frac{k\phi(x)}{2}}\int_DA_{k,s}(t,x,y)e^{-\frac{k\phi(y)}{2}}\hat u(y)dv_X(y)\ \ \mbox{on $D$}, 
\end{equation}
for every $u=s^k\otimes\hat u\in\Omega^{0,q}_c(D,L^k)$, $\hat u\in\Omega^{0,q}_c(D)$. Note that 
\begin{equation}\label{e-gue210303yydII}
A_k(t,x,y)=s^k(x)\otimes A_{k,s}(t,x,y)e^{\frac{k\phi(x)-k\phi(y)}{2}}\otimes (s^k(y))^*\ \ \mbox{on $D$}. 
\end{equation}
In particular, 
\begin{equation}\label{e-gue210303yydIII}
\mbox{$e^{-t\Box^q_{b,k}}(x,x)=A_k(t,x,x)=A_{k,s}(t,x,x)$ on $D$}.
\end{equation}
We also notice that 
\begin{equation}\label{e-gue210303yydIV}
\mbox{$\mathop{\lim}\limits_{t\To0}\int A_{k,s}(t,x,y)\hat u(y)dv_X(y)=\hat u(x)$, for every $\hat u\in\Omega^{0,q}_c(D)$}. 
\end{equation}

\subsection{CR manifolds with $\mathbb R$-action}\label{s-gue210530yyd}

Now assume that $X$ admits a $\mathbb R$-action $\eta$, $\eta\in\mathbb R$: $\eta: X\to X$, $x\mapsto\eta\circ x$.  Let $T\in\cali{C}^\infty(X, TX)$ be the infinitesimal generator of   the $\mathbb R$-action which  is given by
\begin{equation}\label{e-gue150808}
(Tu)(x)=\frac{\partial}{\partial \eta}\left(u(\eta\circ x)\right)|_{\eta=0},\ \ u\in\cali{C}^\infty(X).
\end{equation}

\begin{defin}\label{d-gue210531yyd}
We say that the $\mathbb R$-action $\eta$ is CR if
$$[T, \cali{C}^\infty(X, T^{1,0}X)]\subset\cali{C}^\infty(X, T^{1,0}X)$$ and the $\mathbb R$-action is transversal if for each $x\in X$,
$$\mathbb C T(x)\oplus T_x^{1,0}(X)\oplus T_x^{0,1}X=\mathbb CT_xX.$$ 
\end{defin}

Assume that $(X, T^{1,0}X)$ is a CR manifold of dimension $2n+1$, $n\geq 1$, with a transversal CR $\mathbb R$-action $\eta$ and we let $T$ be the infinitesimal generator of the $\mathbb R$-action. Let $\omega_0\in\cali{C}^\infty(X,T^*X)$ be the global real one form determined by
\begin{equation}\label{e-gue150808I}
\begin{split}
&\langle\,\omega_0\,,\,u\,\rangle=0,\ \ \forall u\in T^{1,0}X\oplus T^{0,1}X,\\
&\langle\,\omega_0\,,\,T\,\rangle=-1.
\end{split}
\end{equation}
Suppose that $X$ admits a $\mathbb R$-invariant complete Hermitain metric $\langle\,\cdot\mid\cdot\,\rangle$ on $\mathbb CTX$ so that we have the orthogonal decomposition \[
\C TX=T^{1,0}X\oplus T^{0,1}X\oplus\{\lambda T\mid \lambda\in\C\}
\]
and $|T|^2=\langle\,T\mid T\,\rangle=1$.  

For $u\in\Omega^{0,q}(X)$, we define
\begin{equation}\label{e-gue150508faIIq}
Tu:=\frac{\partial}{\partial\eta}\bigr(\eta^*u\bigr)|_{\eta=0}\in\Omega^{0,q}(X),
\end{equation}
where $\eta^*: T^{*0,q}_{\eta\circ x}X\To T^{*0,q}_{x}X$ is the pull-back map of $\eta$. 
Since the $\mathbb R$-action is CR, as in the $S^1$-action case (see Section 2.4 in~\cite{HLM}), we have
\[T\dbar_b=\dbar_bT\ \ \mbox{on $\Omega^{0,q}(X)$}.\]

Let $(L,h^L)$ be a rigid CR line bundle over $X$ with a $\mathbb R$-invariant Hermitian metric $h^L$ on $L$. We refer the reader to Section 2.3 in~\cite{HHL20} for the definitions and terminology about rigid CR vector bundles. We recall the following (see Definition 2.9 in~\cite{HHL20})

\begin{defin}\label{Def:RigidCVB}
	A CR vector bundle $E$ of rank \(r\) over \(X\) with a CR bundle lift \(T^E\) of \(T\)  is called rigid CR (with respect to \(T^E\)) if for every point \(p\in X\) there exists an open neighborhood \(U\) around \(p\) and a CR frame \(\{f_1,\ldots,f_r\}\) of \(E|_U\) with \(T^E(f_j)=0\) for \(1\leq j\leq r\).
\end{defin}

A section $s\in \cali{C}^\infty(X, E)$ is called a rigid CR section if $T^E s=0$ and $\overline\partial_b s=0$. The frame $\{f_j\}_{j=1}^r$ in Definition~\ref{Def:RigidCVB}  is called a rigid CR frame of $E|_U$.
Note that any rigid CR vector bundle is locally CR trivializable. 

For every $k\in\mathbb N$, we use $T$ to denote the CR lifting $T^{L^k}$. For $u\in\Omega^{0,q}(X,L^k)$, we can define $Tu$ in the standard way (see Section 2.3 in~\cite{HHL20} ) and we have $Tu\in\Omega^{0,q}(X,L^k)$.  

Consider the operator
\[-iT: \Omega^{0,q}(X,L^k)\To\Omega^{0,q}(X,L^k)\]
and we extend $-iT$ to $L^2_{(0,q)}(X,L^k)$ space by
\begin{equation}\label{e-gue210608yyd}
\begin{split}
&-iT: {\rm Dom\,}(-iT)\subset L^2_{(0,q)}(X,L^k)\To L^2_{(0,q)}(X,L^k),\\
&{\rm Dom\,}(-iT)=\set{u\in L^2_{(0,q)}(X,L^k);\, -iTu\in L^2_{(0,q)}(X,L^k)}.
\end{split}\end{equation}
It is known that~\cite[Lemma 4.3]{HMW} $-iT$ is self-adjoint. 
  
Fix $\delta>0$, let 
\begin{equation}\label{e-gue210531yydb}
\begin{split}
&L^2_{(0,q),\leq k\delta}(X,L^k):=E_{-iT}([-k\delta,k\delta]),\\
&\Omega^{0,q}_{\leq k\delta}(X,L^k):=\Omega^{0,q}(X,L^k)\bigcap L^2_{(0,q),\leq k\delta}(X,L^k),
\end{split}
\end{equation}
where $E_{-iT}$ denotes the spectral measure of $-iT$. Let 
\begin{equation}\label{e-gue210531yydc}
Q_{X,\leq k\delta}: L^2_{(0,q)}(X,L^k)\To L^2_{(0,q),\leq k\delta}(X,L^k)
\end{equation}
be the orthogonal projection with respect  to $(\,\cdot\mid\cdot\,)_{h^{L^k}}$. We have the following partial $\dbar_{b,k}$-complex: 
$$
\cdots\longrightarrow\,\omz^{0,q-1}_{\leq k\delta}(X, L^k)\,\mathop{\longrightarrow}\limits^{\dbar_{b,k}}\omz^{0,q}_{\leq k\delta}(X, L^k) \,\mathop{\longrightarrow}\limits^{\dbar_{b,k}}\,\omz^{0,q+1}_{\leq k\delta}(X, L^k)\,\longrightarrow\cdots
$$
and  put 
\begin{equation}\label{e-gue210531yydd}
H^q_{b,\leq k\delta}(X,L^k):=\dfrac{\ke \dbar_{b,k}:\omz^{0,q}_{\leq k\delta}(X,L^k)\to \omz^{0,q+1}_{\leq k\delta}(X,L^k)}{\im \dbar_{b,k}: \omz^{0,q-1}_{\leq k\delta}(X,L^k)\to \omz^{0,q}_{\leq k\delta}(X,L^k)},\quad 0\le q\le n.
\end{equation}
Let 
\begin{equation}\label{e-gue210531yyde}
\begin{split}
&\Box^q_{b,k,\leq k\delta}: {\rm Dom\,}\Box^q_{b,k,\leq k\delta}\subset L^2_{(0,q),\leq k\delta}(X,L^k)\To L^2_{(0,q),\leq k\delta}(X,L^k),\\
& {\rm Dom\,}\Box^q_{b,k,\leq k\delta}={\rm Dom\,}\Box^q_{b,k}\bigcap L^2_{(0,q),\leq k\delta}(X,L^k),\\
&\Box^q_{b,k,\leq k\delta}=\dbarstar_{b,k}\dbar_{b,k}+\dbar_{b,k}\dbarstar_{b,k}\ \ \mbox{on ${\rm Dom\,}\Box^q_{b,k,\leq k\delta}$}. 
\end{split}
\end{equation}
It is not difficult to check that without $Y(q)$ condition, 
\begin{equation}\label{e-gue210531yydf}
H^q_{b,\leq k\delta}(X,L^k)\cong{\rm Ker\,}\Box^q_{b,k,\leq k\delta},\ \ {\rm dim\,}H^q_{b,\leq k\delta}(X,L^k)<+\infty.
\end{equation}

Let
\begin{equation}\label{e-gue210531yydg}
e^{-t\Box^q_{b,k,\leq k\delta}}:=e^{-t\Box^q_{b,k}}\circ Q_{X,\leq k\delta}: L^2_{(0,q)}(X,L^k)\To{\rm Dom\,}\Box^q_{b,k,\leq k\delta},\ \ t>0,
\end{equation} 
and let 
\[A_{k,\delta}(t,x,y)=e^{-t\Box^q_{b,k,\le k\delta}}(x,y)\in\cali{C}^\infty(\mathbb R_+\times X\times X, (T^{*0,q}X\otimes L^k)\boxtimes(T^{*0,q}X\otimes L^k)^*)\]
 be the distribution kernel of $e^{-t\Box^q_{b,k,\leq k\delta}}$ with respect to $dv_X(x)$. We also write $A_{k,\delta}(t)$ to denote $e^{-t\Box^q_{b,k,\leq k\delta}}$. One of the main goal of this work is to study the asymptotic behavior of 
 $A_{k,\delta}(\frac{t}{k},x,x)$ as $k\To+\infty$. Note that $A_{k,\delta}(\frac{t}{k})$ satisfies the following 
 \begin{equation}
\begin{cases}
\left(\frac{\pa}{\pa t}+\frac{1}{k}\Box^q_{b,k}\right)A_{k,\delta}(\frac{t}{k})=0,\\
\lim_{t\To0}A_{k,\delta}(t)=Q_{X,\leq k\delta}\ \ \mbox{on $L^2_{(0,q)}(X,L^k)$}.
\end{cases}
\end{equation}

As before, we use the canonical identification ${\rm End\,}(L^k)=\mathbb C$, especially 
\begin{equation}\label{e-gue210303yydab}
A_{k,\delta}(\frac{t}{k},x,y)\in\cali{C}^\infty(\mathbb R_+\times X\times X, T^{*0,q}X\boxtimes(T^{*0,q}X)^*).
\end{equation}
Let $s$ be a local CR rigid trivializing section of $L$ defined on an open set $D$ of $X$. We have the unitary identification: 
\[\begin{split}
U: L^2_{(0,q)}(D,L^k)&\To L^2_{(0,q)}(D),\\
s^k\otimes \hat u&\To e^{-\frac{k\phi}{2}} \hat u.
\end{split}\]
There exists $A_{k,s,\delta}(t,x,y)\in\cali{C}^\infty(\mathbb R_+\times D\times D, T^{*0,q}X\boxtimes(T^{*0,q}X)^*)$ such that 
\begin{equation}\label{e-gue210303yydIa}
(e^{-t\Box^q_{b,k,\leq k\delta}}u)(x)=s^k(x)\otimes e^{\frac{k\phi(x)}{2}}\int_DA_{k,s,\delta}(t,x,y)e^{-\frac{k\phi(y)}{2}}\hat u(y)dv_X(y)\ \ \mbox{on $D$}, 
\end{equation}
for every $u=s^k\otimes\hat u\in\Omega^{0,q}_c(D,L^k)$, $\hat u\in\Omega^{0,q}_c(D)$. Note that 
\begin{equation}\label{e-gue210303yydIIa}
A_{k,\delta}(t,x,y)=s^k(x)\otimes A_{k,s,\delta}(t,x,y)e^{\frac{k\phi(x)-k\phi(y)}{2}}\otimes (s^k(y))^*\ \ \mbox{on $D$}. 
\end{equation}
In particular, 
\begin{equation}\label{e-gue210303yydIIIa}
\mbox{$e^{-t\Box^q_{b,k,\leq k\delta}}(x,x)=A_{k,\delta}(t,x,x)=A_{k,s,\delta}(t,x,x)$ on $D$}.
\end{equation}

\section{The estimate of the heat kernel distribution}\label{estimate}

We shall first set up the local coordinates. For a given point $p\in X$, let $U_1,\cdots,U_n$ be a smooth orthonormal frame for $(1,0)$ vector fields in a neighborhood of $p$,
 for which $\mathcal L_p(U_j,\ol U_l)=\delta_{j,l}\lambda_j(p)$, $1\le j,l\le n$. Let $\omega^1,\cdots,\omega^n$ be an orthonormal basis for $(1,0)$ forms which is dual to the basis $U_1,\cdots,U_n$. Let $s$ be a local CR trivializing section of $L$ on an open neighborhood  $D$ of $p$ with local weight $\phi$, i.e. $|s(x)|^2_{h^L}=e^{-\phi(x)}$ on $D$.
 
We take local coordinates $x=(x_1,\ldots,x_{2n+1})=(z,\theta)=(z_1,\cdots,z_n,\theta)$ on an open set $D$ of $p$, such that 
\begin{equation}\label{local1}
 (z(p),\theta(p))=0,\quad \omega_0(p)=d\theta,\quad z_j=x_{2j-1}+ix_{2j},\quad \frac{\pa}{\pa z_j}=\frac{1}{2}\left(\frac{\pa}{\pa x_{2j-1}}-i\frac{\pa}{\pa x_{2j}}\right),
\end{equation}
\begin{equation}\label{local2}
\begin{aligned}
&\langle\,\frac{\partial}{\partial x_j}\,\mid\,\frac{\partial}{\partial x_l}\,\rangle=2\delta_{j,l}+O(|x|), \quad
\langle\,\frac{\partial}{\partial x_j}\,\mid\,\frac{\partial}{\partial\theta}\,\rangle=O(|x|),\\
 &\left|\frac{\partial}{\partial \theta}\right|^2=1+O(|x|),\quad  \left|\frac{\pa}{\pa z_j}\right|^2=1+O(|x|),
 \end{aligned}
\end{equation}
for $j,l=1,\cdots,2n$.
 Moreover,
\begin{equation}\label{U}
   U_j=\frac{\pa}{\pa z_j}-i\lambda_j\ol z_j\frac{\pa}{\pa \theta}-c_j\theta\frac{\pa}{\pa \theta}+\sum^{2n+1}_{s=1}r_{j,s}(z,\theta)\frac{\partial}{\partial x_s},
\end{equation}
where $r_{j,s}(z,\theta)\in\cali{C}^\infty(D)$, $r_{j,s}(z,\theta)=O(|(z,\theta)|^2)$, $j=1,\ldots,n$, $s=1,\ldots,2n+1$, 
and
\begin{equation}\label{phi}
\begin{aligned}
\phi(z,\theta)=&\beta\theta+\sum_{j=1}^n(a_jz_j+\ol a_j\ol z_j)+\sum_{j,l=1}^{n}(a_{j,l}z_jz_l+\ol a_{j,l}\ol z_j\ol z_l)+\sum_{j,l=1}^{n}\mu_{j,l}z_j\ol z_l\\
&+O(|\theta|^2+|z||\theta|+|(z,\theta)|^3)
\end{aligned}
\end{equation}
where $\beta\in\R$, $c_j$, $a_j$, $a_{j,l}$, $\mu_{j,l}\in\C$, and $\lambda_j$ are the eigenvalues of $\mathcal L_p$, $j, l=1,\ldots,n$. We then work with this local coordinates and we identify $D$ with some open set in $\R^{2n+1}$.

\subsection{The  unitary identification}
\

\vspace{0.25cm}
For $u,v\in \omz_c^{0,q}(D)$, let
$$
(u\mid v)_{k\phi,D}=(u\mid v)_{k\phi}:=\int_{D}\langle\,u\mid v\,\rangle e^{-k\phi}dv_X(x).
$$
$(\,\cdot\mid\cdot\,)_{k\phi}$ is called the inner product on $\omz_c^{0,q}(D)$ with weight $k\phi$. Denote by $L_{(0,q)}^2(D,k\phi)$ the completion of $\omz_c^{0,q}(D)$ with respect to $(\,\cdot\mid\cdot\,)_{k\phi}$.
Let $\dbar_b^{*,k\phi}$ be the formal adjoint of $\dbar_b$ with respect to $(\,\cdot\mid\cdot\,)_{k\phi}$. From $\dbar_b=\sum_{j=1}^{n}\bigr(\ol\omega^j\wedge \ol U_j+(\dbar_b \ol\omega^j)\wedge (\ol\omega^j\wedge)^*\bigr)$, we can check that
\begin{equation}
\dbar^{*,k\phi}_{b}=\sum_{j=1}^{n}\bigr((\ol\omega^j\wedge)^*(- U_j+kU_j\phi+\beta_j(z,\theta))+\ol\omega^j\wedge (\dbar_b \ol\omega^j\wedge)^*\bigr),
\end{equation}
where $\beta_j(z,\theta)$ is a smooth function on $D$, independent of $k$, for every $j=1,\ldots,n$. Set
\begin{equation}\label{e-gue210531yydu}
\Box_{b,k\phi}^q=\dbar^{*,k\phi}_{b}\dbar_{b}+\dbar_{b}\dbar^{*,k\phi}_{b}: \Omega^{0,q}(D)\To\Omega^{0,q}(D).
\end{equation}
For $u\in\omz^{0,q}(D,L^k)$, there exists a $\check u\in \omz^{0,q}(D)$ such that 
$u=s^k\check u$ on $D$. Recall that $\dbar_{b,k}(s^k \check u)=s^k\dbar_b \check u$ and we have
\begin{equation}\label{e-gue210303yyda}
\Box_{b,k}^qu=s^k\Box_{b,k\phi}^q\check u.
\end{equation}
From now on, on $D$, we identify $u$ with $\check u$ and $\Box_{b,k}^q$ with $\Box_{b,k\phi}^q$.

Set
\begin{equation}\label{rho}
\rho(z)=\sum_{j=1}^na_jz_j+\sum_{j,l=1}^{n}a_{j,l}z_jz_l
\end{equation}
and 
\begin{equation}\label{phi0}
\begin{aligned}
\phi_0(z,\theta)&=\phi(z,\theta)-\rho(z)-\ol{\rho(z)}\\
&=\beta\theta+\sum_{j,l=1}^{n}\mu_{j,l}z_j\ol z_l+O(|\theta|^2+|z||\theta|+|(z,\theta)|^3).
\end{aligned}
\end{equation}
We consider the following unitary identification:
$$
\begin{cases}
L^2_{(0,q)}(D,k\phi)&\leftrightarrow L^2_{(0,q)}(D,k\phi_0)\\
u&\to\tilde{u}=e^{-k\rho}u\\
u=e^{k\rho}\tilde{u}&\leftarrow\tilde{u}\\
\end{cases}
$$
For $u\in\omz^{0,q}(D)\cap L^2_{(0,q)}(D,k\phi_0)$,
let $\dbar_{\rho}=\dbar_b+k(\dbar_b\rho)\wedge$, then we have 
\begin{equation}\label{dbarrho}
\dbar_\rho\tilde u=e^{-k\rho}\dbar_{b}u=\widetilde{\dbar_bu}.
\end{equation}
Let $\dbar_\rho^{*,k\phi_0}$ be the formal adjoint of $\dbar_{\rho}$ with respect to $(\,\cdot\mid\cdot\,)_{k\phi_0}$. We can check that 
\begin{equation}
\begin{aligned}
\dbar_\rho=\sum_{j=1}^{n}\bigr(\ol\omega^j\wedge (\ol U_j+k\ol U_j\rho)+(\dbar_b \ol\omega^j)\wedge (\ol\omega^j\wedge)^*\bigr)
\end{aligned}
\end{equation}
and
\begin{equation}\label{dbarstarrho}
\begin{aligned}
\dbar_\rho^{*,k\phi_0}=\sum_{j=1}^{n}\bigr((\ol\omega^j\wedge)^*\left(-U_j+kU_j\phi_0+kU_j\ol\rho+\alpha_j(z,\theta)\right)+\ol\omega^j\wedge (\dbar_b \ol\omega^j\wedge)^*\bigr),
\end{aligned}
\end{equation}\label{dbarstarkphi0}
where $\alpha_j(z,\theta)$ is a smooth function on $D$, independent of $k$, for every $j=1,\ldots,n$. For abbreviation, we write $\ol U_{\rho,j}=\ol U_j+k\ol U_j\rho$ and $\ol U_{\rho,j}^{*,k\phi_0}=-U_j+kU_j\phi_0+kU_j\ol\rho+\alpha_j(z,\theta)$, $j=1,\ldots,n$. 
Put
$$
\Box^q_{\rho.k\phi_0}=\dbar^{*,k\phi_0}_{\rho}\dbar_{\rho}+\dbar_{\rho}\dbar^{*,k\phi_0}_{\rho}: \Omega^{0,q}(D)\To\Omega^{0,q}(D).
$$ 
Thorough a straightforward computation, we have
\begin{equation}
\begin{aligned}
\Box^q_{\rho.k\phi_0}=&\sum_{j=1}^{n}\ol U_{\rho,j}^{*,k\phi_0}\ol U_{\rho,j}
+\sum_{j,l=1}^{n}\ol\omega^j\wedge(\ol\omega^l\wedge)^*\big[\ol U_{\rho,j}\,,\,\ol U_{\rho,l}^{*,k\phi_0}\big] \\
&+\sum_{j=1}^n\Big(\epsilon_j(z,\theta)\ol U_{\rho,j}+s_j(z,\theta)\ol U_{\rho,j}^{*,k\phi_0}\Big)+\gamma(z,\theta),
\end{aligned}
\end{equation}
where $\epsilon_j$, $s_j$ and $\gamma$ are smooth functions on $D$ and independent of $k$, $j=1,\ldots,n$. We can check that
\begin{equation}\label{e-gue210516yyd}
\Box_{b,k\phi}^qu=e^{k\rho}\Box_{\rho, k\phi_0}^q(e^{-k\rho}u),\ \ u\in\Omega^{0,q}(D).
\end{equation} 

Until further notice, we assume that $Y(q)$ holds on $X$. Let 
\begin{equation}\label{e-gue210305yyd}
\begin{split}
&A_{k\phi}(\frac{t}{k},x,y):=e^{\frac{k\phi(x)}{2}}A_{k,s}(\frac{t}{k},x,y)e^{-\frac{k\phi(y)}{2}},\\
&A_{k\phi_0}(\frac{t}{k},x,y):=e^{-k\rho(x)}A_{k\phi}(\frac{t}{k},x,y)e^{k\rho(y)}=e^{-k\rho(x)+\frac{k\phi(x)}{2}}A_{k,s}(\frac{t}{k},x,y)e^{k\rho(y)-\frac{k\phi(y)}{2}},
\end{split}
\end{equation}
where $A_{k,s}(\frac{t}{k},x,y)$ is as in \eqref{e-gue210303yydII}. For $t>0$, let 
\[A_{k\phi_0}(t): \mathscr E'(D,T^{*0,q}X)\To\Omega^{0,q}(D) \]
be the continuous operator with distribution kernel $A_{k\phi_0}(t,x,y)$ with respect to $dv_X$. Note that 
\begin{equation}\label{e-gue210305yydI}
A_{k\phi_0}(t)u=\int A_{k\phi_0}(t,x,y)u(y)dv_X(y),\ \ u\in\Omega^{0,q}_c(D).
\end{equation}
It is easy to check that 
\begin{equation}\label{e-gue210305yydII}
\begin{cases}
&A'_{k\phi_0}(t)+\Box^q_{\rho,k\phi_0}A_{k\phi_0}(t)=0,\\
&\lim_{t\To0^+}A_{k\phi_0}(t)=I.
\end{cases}
\end{equation}


Notice that the reason we take the term $\rho(z)$ out of $\phi(z,\theta)$ is that  the term $kU_j\ol\rho$ in it will approach to zero as $k\To+\infty$ after $\Box_{\rho,k\phi_0}^q$ been scaled. Now let us introduce the scaling technique.

\subsection{The scaling technique} 
\

\vspace{0.25cm}

Let $B_r:=\{(z,\theta)=(x_1,\cdots x_{2n},\theta)\in\R^{2n+1}\mid |x_j|<r,|\theta|<r,j=1,\cdots 2n\}$. Let 
\[\begin{split}
F_k: \mathbb R^{2n+1}&\To\mathbb R^{2n+1},\\
(z,\theta)&\To(\frac{z}{\sqrt{k}},\frac{\theta}{k}),
\end{split}\]
 be the scaling map, then we can choose sufficient large $k$ such that $F_k(B_{\log k})\subset D$. For $(z,\theta)\in B_{\log k}$, let 
\begin{equation}
F_k^*T^{*0,q}_{z,\theta}X:=\left\{\sumprime_{|J|=q}a_J\ol\omega^J\left(\frac{z}{\sqrt k},\frac{\theta}{k}\right)\,\bigg|\, a_J\in\C\right\},
\end{equation}
where the sum takes over all $q$-tuple of integers between $1$ and $n$, the prime means that we sum over only strictly increasing multiindices, 
$\ol\omega^J=\ol\omega^{j_1}\wedge\cdots\wedge\ol\omega^{j_q}$, $1\leq j_1<\cdots<j_q\leq n$.
Let $F^*_kT^{*0,q}X$ be the vector bundle over $B_{\log k}$ with fiber $F^*_kT^{*0,q}_{x}X$, $x=(z,\theta)\in B_{\log k}$.
Define by $F_k^*\omz^{0,q}(B_r)$ the space of smooth sections of $F_k^*T^{*0,q}X$ over $B_r$. Let 
\[F^*_k\Omega^{0,q}_c(B_r)=\set{u\in F^*_k\Omega^{0,q}(B_r)\mid {\rm supp\,}u\Subset B_r}.\]
For $u=\sumprime_{|J|=q}u_J\ol\omega^J\in \omz^{0,q}(F_k(B_{\log k}))$, we define the scaled form $F_k^*u$ by
\begin{equation}
F_k^*u:=\sumprime_{|J|=q}u_J\left(\frac{z}{\sqrt k},\frac{\theta}{k}\right)\ol\omega^J\left(\frac{z}{\sqrt k},\frac{\theta}{k}\right)\in F_k^*\omz^{0,q}(B_{\log k}).
\end{equation}

Let $\dbar_{\rho,(k)}:F_k^*\omz^{0,q}(B_{\log k})\to F_k^*\omz^{0,q+1}(B_{\log k})$ be the scaled differential operator, which is given by
\begin{equation}\label{dbarrhok}
\begin{aligned}
\dbar_{\rho,(k)}&=\sum_{j=1}^{n}\ol\omega^j\left(\frac{z}{\sqrt k},\frac{\theta}{k}\right)\wedge \left(\ol U_{j,(k)}+\sqrt kF_k^*(\ol U_j\rho)\right)\\
&+\frac{1}{\sqrt k}(\dbar_b \ol\omega^j)\left(\frac{z}{\sqrt k},\frac{\theta}{k}\right)\wedge \left(\ol\omega^j\left(\frac{z}{\sqrt k},\frac{\theta}{k}\right)\wedge\right)^*,
\end{aligned}
\end{equation}
where 
\begin{equation}\label{staru1}
\begin{split}
\ol U_{j,(k)}=&\frac{\pa}{\pa \ol z_j}+i\lambda_j z_j\frac{\pa}{\pa \theta}-\frac{1}{\sqrt k}\bar c_j\theta\frac{\pa}{\pa \theta}\\
&\quad+\sum^{2n}_{s=1}r_{j,s}(\frac{z}{\sqrt{k}},\frac{\theta}{k})\frac{\partial}{\partial x_s}+\sqrt{k}r_{j,2n+1}(\frac{z}{\sqrt{k}},\frac{\theta}{k})\frac{\partial}{\partial\theta},
\end{split}
\end{equation}
where $r_{j,s}(z,\theta)\in\cali{C}^\infty(D)$, $j=1,\ldots,n$, $s=1,\ldots,2n+1$, are as in \eqref{U}. For every $j=1,\ldots,n$, put 
\begin{equation}\label{e-gue210305ycd}
\begin{split}
&\varepsilon_{j,(k)}:=\sum^{2n}_{s=1}r_{j,s}(\frac{z}{\sqrt{k}},\frac{\theta}{k})\frac{\partial}{\partial x_s}+\sqrt{k}r_{j,2n+1}(\frac{z}{\sqrt{k}},\frac{\theta}{k})\frac{\partial}{\partial\theta},\\
&\overline\varepsilon_{j,(k)}:=\sum^{2n}_{s=1}\overline{r_{j,s}}(\frac{z}{\sqrt{k}},\frac{\theta}{k})\frac{\partial}{\partial x_s}+\sqrt{k}\overline{r_{j,2n+1}}(\frac{z}{\sqrt{k}},\frac{\theta}{k})\frac{\partial}{\partial\theta}.
\end{split}
\end{equation}
Note that 
\begin{equation}\label{staru2}
\sqrt kF_k^*(\ol U_j\rho)=\overline\varepsilon_{j,(k)}\left(\sum_{j=1}^na_jz_j+\frac{1}{\sqrt k}\sum_{j,l=1}^{n}a_{j,l}z_jz_l\right).
\end{equation}
Compare \eqref{dbarrho} and \eqref{dbarrhok}, one can check that
\begin{equation}\label{k1}
\dbar_{\rho,(k)}(F_k^*u)=\frac{1}{\sqrt k}F_k^*(\dbar_{\rho}u),\quad u\in\omz^{0,q}(F_k(B_{\log k})).
\end{equation} 

Fix $q\in\set{0,1,\ldots,n}$. With the notations used in the beginning of Section~\ref{estimate}, 
\[\set{\overline\omega^J(x)\mid J=(j_1,\ldots,j_q), 1\leq j_1<\cdots<j_q\leq n}\] 
is an orthonormal frame for $T^{*0,q}_xX$, for every $x\in D$. Let $\langle\,\cdot\mid\cdot\,\rangle_{F^*_k}$ be the Hermitian metric of $F^*_kT^{*0,q}X$ on $B_{\log k}$ such that $\set{\overline\omega^J(F_kx)\mid J=(j_1,\ldots,j_q, 1\leq j_1<\cdots<j_q\leq n}$ is an orthonormal frame at every $x\in B_{\log k}$. On $D$, write 
$dv_X(x)=m(x)dx_1\cdots dx_{2n+1}$, $m(x)\in\cali{C}^\infty(D)$. Let $(\,\cdot\mid\cdot\,)_{kF^*_k\phi_0}$ be the $L^2$ inner product on $F^*_k\Omega^{0,q}_c(B_{\log k})$ given by 
\[(\,u\mid v\,)_{kF^*_k\phi_0}=\int\langle\,u\mid v\,\rangle_{F^*_k}e^{-kF^*_k\phi_0}m(F_kx)dx_1\cdots dx_{2n+1},\ \ u, v\in F^*_k\Omega^{0,q}_c(B_{\log k}).\]

Let $\dbar_{\rho,(k)}^{*,kF_k^*\phi_0}$ be the formal adjoint of $\dbar_{\rho,(k)}$ with respect to $(\,\cdot\mid\cdot\,)_{kF_k^*\phi_0}$, then
\begin{equation}\label{dbarstarrhok}
\begin{aligned}
\dbar_{\rho,(k)}^{*,kF_k^*\phi_0}=&\sum_{j=1}^{n}\left(\ol\omega^j\left(\frac{z}{\sqrt k},\frac{\theta}{k}\right)\wedge\right)^*\left(-U_{j,(k)}+\sqrt kF_k^*(U_j\phi_0)+\sqrt kF_k^*(U_j\ol\rho)+\frac{1}{\sqrt k}F_k^*\alpha_j\right)\\
&+\frac{1}{\sqrt k}\ol\omega^j\left(\frac{z}{\sqrt k},\frac{\theta}{k}\right)\wedge \left((\dbar_b \ol\omega^j)\left(\frac{z}{\sqrt k},\frac{\theta}{k}\right)\wedge\right)^*,
\end{aligned}
\end{equation}
where $\alpha_j$ is a smooth function as in \eqref{dbarstarrho}. Compare with \eqref{dbarstarrho}, we also have
\begin{equation}\label{k2}
\dbar^{*,kF_k^*\phi_0}_{\rho,(k)}(F_k^*u)=\frac{1}{\sqrt k}F_k^*(\dbar^{*,k\phi_0}_{\rho}u),\quad u\in\Omega^{0,q+1}(F_k(B_{\log k})).
\end{equation}
Now we define the scaled Kohn-Laplacian:
\begin{equation}\label{e-gue210614yydI}
\Box^q_{\rho,(k)}=\dbar_{\rho,(k)}^{*,kF_k^*\phi_0}\dbar_{\rho,(k)}+\dbar_{\rho,(k)}\dbar_{\rho,(k)}^{*,kF_k^*\phi_0}
\end{equation}
on $F_k^*\Omega^{0,q}(B_{\log k})$. It follows from \eqref{k1} and \eqref{k2} immediately that
\begin{equation}\label{k(k)}
\Box_{\rho,(k)}^q(F_k^*u)=\frac{1}{k}F_k^*(\Box_{\rho,k\phi_0}^qu),\quad u\in\Omega^{0,q}(F_k(B_{\log k})). 
\end{equation} 

Let 
\begin{equation}\label{e-gue210325yyd}
\begin{split}
&A_{(k)}(t,x,y)\\
&:=k^{-(n+1)}A_{k\phi_0}(\frac{t}{k},F_kx,F_ky)\in\cali{C}^\infty(\mathbb R_+\times B_{\log k}\times B_{\log k}, F^*_kT^{*0,q}X\boxtimes(F^*_kT^{*0,q}X)^*),
\end{split}
\end{equation}
where $F_kx=(\frac{z}{\sqrt{k}},\frac{\theta}{k})$, $F_ky=(\frac{w}{\sqrt{k}},\frac{\xi}{k})$, $y=(w,\xi)\in\mathbb C^n\times\mathbb R$, $A_{k\phi_0}(t,x,y)$ is as in
\eqref{e-gue210305yyd}. Let 
\[A_{(k)}(t): F^*_k\Omega^{0,q}_c(B_{\log k})\To F^*_k\Omega^{0,q}(B_{\log k})\] 
be the continuous operator given by 
\begin{equation}\label{e-gue210325yydI}
(A_{(k)}(t)u)(x)=\int A_{(k)}(t,x,y)u(y)m(F_ky)dy,\ \ u\in F^*_k\Omega^{0,q}_c(B_{\log k}).
\end{equation}

Observe that \eqref{staru1} and \eqref{staru2} show us
\begin{equation}\label{staru3}
\ol U_{j,(k)}+\sqrt kF_k^*(\ol U_j\rho)=\frac{\pa}{\pa \ol z_j}+i\lambda_j z_j\frac{\pa}{\pa \theta}+\epsilon_kZ_{j,k},\ \ j=1,\ldots,n,
\end{equation}
on $B_{\log k}$, where $\epsilon_k$ is a sequence tending to zero as $k\to\infty$ and $Z_{j,k}$ is a first order differential operator and all the 
derivatives of the coefficients of $Z_{j,k}$ are uniformly bounded in $k$ on $B_{\log k}$, $j=1,\ldots,n$. 
In the same manner, from \eqref{phi0}, we see that 
\begin{equation}\label{staru4}
\begin{aligned}
&-U_{j,(k)}+\sqrt kF_k^*(U_j\phi_0)+\sqrt k(U_j\ol\rho)+\frac{1}{\sqrt k}F_k^*\alpha_j\\
&=-\frac{\pa}{\pa z_j}+i\lambda_j\ol z_j\frac{\pa}{\pa \theta}-i\lambda_j\ol z_j\beta+\sum_{l=1}^{n}\mu_{l,j}\bar z_l+\delta_kV_{j,k},\ \ j=1,\ldots,n, 
\end{aligned}
\end{equation}
on $B_{\log k}$, where $\delta_k$ is a sequence tending to zero as $k\to\infty$ and $V_{j,k}$ is a first order differential operator and all the 
derivatives of the coefficients of $V_{j,k}$ are uniformly bounded in $k$ on $B_{\log k}$, $j=1,\ldots,n$.  
The following proposition is fairly straightforward follows from \eqref{dbarrhok}, \eqref{dbarstarrhok}, \eqref{staru3} and \eqref{staru4}: 

\begin{prop}
We have that
\begin{equation}\label{boxrhok}
\begin{aligned}
\Box_{\rho,(k)}^{q}=&\sum_{j=1}^{n}\left(-\frac{\pa}{\pa z_j}+i\lambda_j\ol z_j\frac{\pa}{\pa \theta}-i\lambda_j\ol z_j\beta+\sum_{l=1}^{n}\mu_{l,j}\bar z_l\right)\left(\frac{\pa}{\pa \ol z_j}+i\lambda_j z_j\frac{\pa}{\pa \theta}\right)\\
&+\sum_{j,l=1}^{n}\ol\omega^j\left(\frac{z}{\sqrt k},\frac{\theta}{k}\right)\wedge \left( \ol\omega^l\left(\frac{z}{\sqrt k},\frac{\theta}{k}\right)\wedge\right)^*
\left(2i\lambda_j \delta_{j,l}\frac{\pa}{\pa \theta}+\mu_{j,l}-i\lambda_j \delta_{j,l}\beta\right)\\
&+\varepsilon_kP_k
\end{aligned}
\end{equation}
on $B_{\log k}$, where $\varepsilon_k$ is a sequence tending to zero as $k\to\infty$ and $P_k$ is a second order differential operator and all the 
derivatives of the coefficients of $P_k$ are uniformly bounded in $k$ on $B_{\log k}$.
\end{prop}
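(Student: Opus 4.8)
The plan is to obtain \eqref{boxrhok} by expanding the anticommutator $\Box^q_{\rho,(k)}=\dbar_{\rho,(k)}^{*,kF_k^*\phi_0}\dbar_{\rho,(k)}+\dbar_{\rho,(k)}\dbar_{\rho,(k)}^{*,kF_k^*\phi_0}$ (see \eqref{e-gue210614yydI}) term by term from \eqref{dbarrhok} and \eqref{dbarstarrhok}, and then simplifying the vector-field coefficients via \eqref{staru3} and \eqref{staru4}. Write $\theta^j:=\ol\omega^j(\tfrac{z}{\sqrt k},\tfrac{\theta}{k})$, $A_j:=\ol U_{j,(k)}+\sqrt kF_k^*(\ol U_j\rho)$ and $B_j:=-U_{j,(k)}+\sqrt kF_k^*(U_j\phi_0)+\sqrt kF_k^*(U_j\ol\rho)+\tfrac1{\sqrt k}F_k^*\alpha_j$, so that $\dbar_{\rho,(k)}=\sum_j\theta^j\wedge A_j+R_k$ and $\dbar_{\rho,(k)}^{*,kF_k^*\phi_0}=\sum_j(\theta^j\wedge)^*B_j+R'_k$, where $R_k,R'_k$ are the $\tfrac1{\sqrt k}$-terms in \eqref{dbarrhok}, \eqref{dbarstarrhok} carrying $\dbar_b\ol\omega^j$. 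First I would multiply these out, commuting each $A_j$, $B_l$ past the coframe factors $\theta^j$; since $\theta^j=\ol\omega^j$ evaluated at $(\tfrac{z}{\sqrt k},\tfrac{\theta}{k})$, every such commutator is a zeroth order operator of size $O(1/\sqrt k)$ on $B_{\log k}$. Using the pointwise anticommutation relation $(\theta^l\wedge)^*\theta^j\wedge+\theta^j\wedge(\theta^l\wedge)^*=\delta_{j,l}$, which holds because $\{\ol\omega^j(F_kx)\}$ is orthonormal for $\langle\,\cdot\mid\cdot\,\rangle_{F^*_k}$, the cross terms collapse and one gets
\[\Box^q_{\rho,(k)}=\sum_{j=1}^nB_jA_j+\sum_{j,l=1}^n\theta^j\wedge(\theta^l\wedge)^*[A_j,B_l]+(\text{error of size }O(1/\sqrt k)).\]

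Next I would substitute \eqref{staru3} and \eqref{staru4}, i.e.\ $A_j=Z_j+\epsilon_kZ_{j,k}$ and $B_j=W_j+\delta_kV_{j,k}$, where $Z_j:=\tfrac{\pa}{\pa\ol z_j}+i\lambda_jz_j\tfrac{\pa}{\pa\theta}$, $W_j:=-\tfrac{\pa}{\pa z_j}+i\lambda_j\ol z_j\tfrac{\pa}{\pa\theta}-i\lambda_j\ol z_j\beta+\sum_l\mu_{l,j}\ol z_l$, $\epsilon_k,\delta_k\to0$, and $Z_{j,k},V_{j,k}$ are first order operators whose coefficient derivatives are bounded uniformly in $k$. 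The principal piece $\sum_jW_jZ_j$ is exactly the first sum in \eqref{boxrhok}. The curvature piece is $\sum_{j,l}\theta^j\wedge(\theta^l\wedge)^*[Z_j,W_l]$, and a one-line commutator computation gives $[Z_j,W_l]=2i\lambda_j\delta_{j,l}\tfrac{\pa}{\pa\theta}+\mu_{j,l}-i\lambda_j\delta_{j,l}\beta$, which is the second sum in \eqref{boxrhok}.

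All remaining contributions are gathered into $\varepsilon_kP_k$. They are of three kinds: (i) the products $R_kR'_k$, the cross terms between $R_k$, $R'_k$ and the principal parts, the commutators $[A_j,\theta^l\wedge]$ and $[B_j,(\theta^l\wedge)^*]$, and the $\tfrac1{\sqrt k}F_k^*\alpha_j$-part of $B_j$, each carrying an explicit power of $1/\sqrt k$; (ii) the mixed products $W_j(\epsilon_kZ_{j,k})$, $(\delta_kV_{j,k})Z_j$, $(\delta_kV_{j,k})(\epsilon_kZ_{j,k})$ and the analogous curvature commutators $\epsilon_k[Z_{j,k},W_l]$, $\delta_k[Z_j,V_{l,k}]$, $\epsilon_k\delta_k[Z_{j,k},V_{l,k}]$, each carrying $\epsilon_k$ or $\delta_k$; (iii) the lower order remainders coming from \eqref{staru3}, \eqref{staru4} themselves. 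Each of these is a differential operator of order at most $2$, and — since the only unbounded coefficients around are linear functions of $z,\ol z,\theta$ inherited from $Z_j$, $W_j$, $Z_{j,k}$, $V_{j,k}$ — their coefficients have derivatives that grow at worst like a fixed power of $\log k$ on $B_{\log k}$. Setting $\varepsilon_k:=C(\log k)^N\max(\epsilon_k,\delta_k,1/\sqrt k)$, which still tends to $0$, the residual operator $P_k$ is then of order $\le2$ with all coefficient derivatives uniformly bounded in $k$ on $B_{\log k}$, as claimed.

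The algebra above is entirely routine given \eqref{dbarrhok}, \eqref{dbarstarrhok}, \eqref{staru3}, \eqref{staru4}; the one step I expect to require genuine care is the last one, namely verifying that every term we discard really does carry a decaying scalar factor and that no coefficient appearing in $P_k$ grows faster than a fixed power of $\log k$, so that a single $\varepsilon_k\to0$ can absorb the bookkeeping. I do not expect any conceptual obstacle beyond this.
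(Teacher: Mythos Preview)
Your proposal is correct and follows exactly the approach the paper intends: the paper states the proposition as ``fairly straightforward'' from \eqref{dbarrhok}, \eqref{dbarstarrhok}, \eqref{staru3}, \eqref{staru4}, and your expansion via the anticommutation identity $(\theta^l\wedge)^*\theta^j\wedge+\theta^j\wedge(\theta^l\wedge)^*=\delta_{j,l}$, together with the commutator computation $[Z_j,W_l]=2i\lambda_j\delta_{j,l}\partial_\theta+\mu_{j,l}-i\lambda_j\delta_{j,l}\beta$, is precisely that straightforward check. Your care in absorbing the stray $(\log k)^N$ factors into $\varepsilon_k$ is appropriate and consistent with the paper's convention that $\epsilon_k,\delta_k,\varepsilon_k$ are merely required to tend to zero.
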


We pause and introduce some notations. 
For $s\in\mathbb N_0$ and $B\subset B_{\log k}$, let 
\[W^s_{kF_k^*\phi_0}(B, F_k^*T^{*0,q}X)\] 
be the Sobolev space of order $s$ define on the sections of $F_k^*T^{*0,q}X$ over $B$ with respect to $(\,\cdot\mid\cdot\,)_{kF_k^*\phi_0}$. Let $x=(z,\theta)\in B$, the Sobolev norm of $u=\sumprime_{|J|=q}u_J\ol\omega^J\left(\frac{z}{\sqrt k},\frac{\theta}{k}\right)\in W^s_{kF_k^*\phi_0}(B, F_k^*T^{*0,q}X)$ is given by
\begin{equation}
\begin{aligned}
\|u\|^2_{kF_k^*\phi_0,s,B}=\sum_{|\alpha|\le s}\sumprime_{|J|=q}\int_{B}|\pa_{x}^\alpha u_J|^2e^{-kF_k^*\phi_0}m(F_kx)dx_1\cdots dx_{2n+1}.
\end{aligned}
\end{equation}
Let us write $\|\cdot\|_{kF_k^*\phi_0,0,B}$ as $\|\cdot\|_{kF_k^*\phi_0,B}$ to simplify the notation. 
Let 
\[\begin{split}
&W^s_{{\rm c\,},kF^*_k\phi_0}=\set{u\in W^s_{kF_k^*\phi_0}(B, F_k^*T^{*0,q}X)\mid{\rm supp\,}u\Subset B},\\
&W^s_{{\rm loc\,},kF^*_k\phi_0}=\set{u\in\mathscr D'(B,F^*_kT^{*0,q}X)\mid\mbox{$\chi u\in W^s_{kF_k^*\phi_0}(B, F_k^*T^{*0,q}X)$, for every $\chi\in\cali{C}^\infty_c(B)$}}.
\end{split}\]
Write 
\[\begin{split}
&L^2_{{\rm c\,},kF^*_k\phi_0}(B, F_k^*T^{*0,q}X):=W^0_{{\rm c\,},kF^*_k\phi_0}(B, F_k^*T^{*0,q}X),\\ 
&L^2_{{\rm loc\,},kF^*_k\phi_0}(B, F_k^*T^{*0,q}X):=W^0_{{\rm loc\,},kF^*_k\phi_0}(B, F_k^*T^{*0,q}X).\end{split}\]

For $s\in\mathbb Z$, $s<0$, define $W^s_{{\rm c\,},kF^*_k\phi_0}(B, F_k^*T^{*0,q}X)$ and $W^s_{{\rm loc\,},kF^*_k\phi_0}(B, F_k^*T^{*0,q}X)$ as the dual spaces of $W^{-s}_{{\rm loc\,},kF^*_k\phi_0}(B, F_k^*T^{*0,q}X)$ and $W^{-s}_{{\rm c\,},kF^*_k\phi_0}(B, F_k^*T^{*0,q}X)$ with respect to $(\,\cdot\mid\cdot\,)_{kF^*_k\phi_0}$ respectively. For $s\in\mathbb Z$, $s<0$, we define $\|\cdot\|_{kF_k^*\phi_0,s,B}$ 
the Sobolev norm for the space $W^s_{{\rm c\,},kF^*_k\phi_0}(B, F_k^*T^{*0,q}X)$ in the standard way. Similarly, for $s\in\mathbb Z$, we define $W^{s}_{{\rm c\,},kF^*_k\phi_0}(B_{\log k}, F^*_kT^{*0,q}X)$, $W^{s}_{{\rm loc\,},kF^*_k\phi_0}(B_{\log k}, F^*_kT^{*0,q}X)$ in the same way. 
We also define $L^2_{{\rm c\,}}(F_kB_{\log k},T^{*0,q}X)$ in the similar way (with $\phi_0=0$). 

Kohn\cite{K65} proved that if $Y(q)$ holds, then $\Box_{\rho,(k)}^q$ is hypoelliptic with loss of one derivative. The following proposition is a direct consequence of Theorem \ref{kohn} and Kohn's poof. 

\begin{prop}\label{p-gue210511yyd}
Recall that we work with the assumption that $Y(q)$ holds on $X$. Let $s\in\mathbb N_0$. 
	For $r>0$ with $B_{2r}\subset B_{\log k}$, there exists a constant $C_{r,s}>0$ which is independent of $k$ such that for all $u\in F_k^*\omz^{0,q}(B_{\log k})$, we have
	\begin{equation}\label{subellipticest}
	\begin{aligned}
	\|u\|^2_{kF_k^*\phi_0,s+1,B_r}\le C_{r,s}\left(\|u\|^2_{kF_k^*\phi_0,B_{2r}}+\|\Box_{\rho,(k)}^qu\|^2_{kF_k^*\phi_0,s,B_{2r}}\right).
	\end{aligned}
	\end{equation}
\end{prop}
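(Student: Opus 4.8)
The plan is to obtain the subelliptic estimate \eqref{subellipticest} for the scaled Kohn Laplacian $\Box^q_{\rho,(k)}$ by transporting Kohn's estimate \eqref{L2} for $\Box^q_{b,k}$ (equivalently $\Box^q_{b,k\phi}$) back through the chain of identifications set up in this section, and tracking how each Sobolev norm and each power of $k$ scales. First I would fix $r>0$ with $B_{2r}\subset B_{\log k}$, take $u\in F_k^*\Omega^{0,q}(B_{\log k})$, and write $u=F_k^*v$ for $v\in\Omega^{0,q}(F_k(B_{\log k}))$; by \eqref{k(k)} we have $\Box^q_{\rho,(k)}u=\tfrac1k F_k^*(\Box^q_{\rho,k\phi_0}v)$, and by \eqref{e-gue210516yyd} together with the unitary identification $v\leftrightarrow e^{k\rho}v$ the operator $\Box^q_{\rho,k\phi_0}$ is conjugate to $\Box^q_{b,k\phi}$, which by \eqref{e-gue210303yyda} is conjugate to $\Box^q_{b,k}$ acting on honest $L^k$-valued forms. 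So the estimate for $\Box^q_{\rho,(k)}$ on $B_r$ will follow from \eqref{L2} (or rather its interior/localized version, with a cutoff supported in $B_{2r}$, which is what Kohn's proof actually gives — hence the two-ball formulation) applied on $F_k(B_{2r})\subset D$, provided we keep careful bookkeeping of the change of variables $F_k$ and of the weight.

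The key computation is the scaling of norms. Under $x\mapsto F_k x=(z/\sqrt k,\theta/k)$, each $\bar z_j$-type derivative picks up a factor $\sqrt k$ and the $\theta$-derivative picks up a factor $k$; on the other hand the weighted measure $e^{-kF_k^*\phi_0}m(F_k x)\,dx$ is precisely the pullback of $e^{-k\phi_0}m\,dx$ times the Jacobian $k^{-(n+1)}$. The point — and this is why the factor $1/k$ in \eqref{k(k)} is exactly right — is that under this scaling one derivative in the anisotropic Kohn sense corresponds to a uniformly bounded gain, so that the right-hand side $\|\Box^q_{\rho,(k)}u\|^2_{kF_k^*\phi_0,s,B_{2r}}$ matches, after the substitution, the quantity $k^{-2}\cdot(\text{something})$ times the pullback of $\|\Box^q_{b,k\phi}v\|^2_s$, and this balances against the extra powers of $k$ coming from the $s+1$ derivatives on the left. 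Concretely I would: (i) write out \eqref{L2} for the form corresponding to $v$, localized by a cutoff $\chi\in\mathcal C^\infty_c(F_k(B_{2r}))$ with $\chi\equiv1$ on $F_k(B_r)$; (ii) absorb the term $\|T^{s+1}u\|$ — which is the genuinely new feature compared with the $Y(q)$-free estimate \eqref{e-gue210228yydI} — by noting that on the scaled picture $T=\partial/\partial\theta$ up to lower order, and one power of $T$ scales like $k$, so $k^{-s-1}\|T^{s+1}(\cdot)\|$ is controlled by $\|\Box^q_{\rho,(k)}u\|_{s,B_{2r}}$ plus $\|u\|_{B_{2r}}$ using \eqref{k(k)} once more (this is the manifestation of "hypoelliptic with loss of one derivative"); (iii) undo the change of variables and collect constants, checking that the constant produced is a fixed multiple of $C_{s,k}$ evaluated on the fixed relatively compact set $F_k(B_{2r})\Subset D$ — here one uses that the coefficients of $\Box^q_{\rho,(k)}$, by the Proposition containing \eqref{boxrhok}, are uniformly bounded in $C^\infty$ on $B_{\log k}$, so the constant can be taken independent of $k$.

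The main obstacle is exactly step (iii): showing the constant is \emph{independent of $k$}. Kohn's constant $C_{s,k}$ in \eqref{L2} a priori depends on $k$ through the metric on $L^k$ and through the size of the coefficients of $\Box^q_{b,k}$, which grow like $k$; the whole virtue of the scaling $F_k$ and of pulling out $\rho(z)$ (so that the dangerous term $kU_j\bar\rho$ becomes $\sqrt k F_k^*(U_j\bar\rho)$ and then, by \eqref{staru4}, a term with a coefficient tending to $0$) is that in the scaled picture $\Box^q_{\rho,(k)}$ converges in $C^\infty_{\mathrm{loc}}$ to the model operator $\Box^q_{H_n,\Phi}$ on the Heisenberg group, with all coefficients and all their derivatives uniformly bounded on $B_{\log k}$. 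The careful argument, then, is to rerun Kohn's subelliptic estimate \emph{for the family} $\Box^q_{\rho,(k)}$ directly — i.e., observe that his proof uses only: the symbol/Levi-form hypothesis $Y(q)$ (stable under the limit since $\Box^q_{\rho,(k)}\to\Box^q_{H_n,\Phi}$ and $Y(q)$ holds at $p$), integration-by-parts identities, and Gårding-type inequalities whose constants depend only on $C^N$-bounds of finitely many coefficients for some fixed $N=N(s)$. Since those bounds are uniform in $k$ on $B_{\log k}$ by the Proposition above, Kohn's argument goes through with a $k$-independent constant, which is the content of \eqref{subellipticest}. I would phrase this as "the same proof as Theorem~\ref{kohn}, applied to $\Box^q_{\rho,(k)}$, noting that every constant appearing depends only on finitely many derivatives of the coefficients, which are uniformly bounded in $k$ on $B_{\log k}$ by \eqref{boxrhok}."
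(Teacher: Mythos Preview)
Your final paragraph is exactly the paper's approach: one reruns Kohn's subelliptic estimate \emph{directly} for the scaled operator $\Box^q_{\rho,(k)}$, observing that every constant in Kohn's proof depends only on finitely many $\cali{C}^\infty$-bounds of the coefficients of the operator, of $kF_k^*\phi_0$, and of $m(F_kx)$, all of which are uniform in $k$ on $B_{\log k}$ by \eqref{boxrhok}. This is precisely what the paper says (the proposition is stated as ``a direct consequence of Theorem~\ref{kohn} and Kohn's proof'', and Remark~\ref{constant} immediately afterward spells out the $k$-independence of the constant for exactly the reason you give).

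The first two paragraphs of your proposal---pulling \eqref{L2} or \eqref{e-gue210228yydI} back through $F_k$ and bookkeeping powers of $k$---are a detour that the paper does not take and that you yourself correctly diagnose as problematic: the constant $C_{s,k}$ in those estimates depends on $k$, and the anisotropic dilation $F_k$ (factor $\sqrt{k}$ in $z$, factor $k$ in $\theta$) does not interact cleanly with the isotropic Sobolev norms $\|\cdot\|_s$ used on the unscaled side, so one cannot recover a $k$-independent constant this way without essentially redoing Kohn's argument on the scaled side anyway. Your step~(ii), absorbing $\|T^{s+1}u\|$ via the scaling of $T$, is also not needed here: since $Y(q)$ holds, the relevant Kohn estimate is the one \emph{without} the $T$-term (Theorem~\ref{t-gue210228yyd}), and it is that version one reruns for $\Box^q_{\rho,(k)}$. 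Drop the transport argument and keep only your last paragraph.
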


\begin{remark}\label{constant}
	Go over Kohn's proof, we see that the constant $C_{r,s}$ only depends on the derivatives of all the coefficients of $\Box_{\rho,(k)}^q$, $kF_k^*\phi_0$ and $m(F_kx)$ on 
	$B_{\log k}$. From \eqref{boxrhok}, it is clear that all the derivatives of the coefficients of $\Box_{\rho,(k)}^q$, $kF_k^*\phi_0$ and $m(F_kx)$ are uniformly bounded in $k$ in $\cali{C}^\infty$ topology on $B_{\log k}$. Hence $C_{r,s}$ can be taken to be indenpendent of $k$.
\end{remark}

Our next objective is to show that if $Y(q)$ holds, $A_{(k)}(t,x,y)$ is uniformly bounded in $k$ in $\cali{C}^\infty$-topology for some fixed $t>0$. For this purpose, we need to apply the spectral theorem\cite{Davies95} for self-adjoint operator. We reproduce it as follows for the reader’s convenience.

\begin{thm}[spectral theorem for self-adjoint operator]\label{spectralthm}
	Let $A: {\rm Dom\,}A\subset\mathbb H\To\mathbb H$ be a self-adjoint operator on a Hilbert space $\mathbb H$. Then there exists a finite regular Borel measure $\mu$ on $\sigma(A)\times\N$ and a unitary transformation 
	$$
	U:\mathbb H\to L^2(\sigma(A)\times\N, d\mu),
	$$
	$U$ is one to one, onto, such that
	$$\Dom A=\left\{U^{-1}f\mid f\in\Dom M_s\right\},$$ 
	where $\sigma(A)$ is the spectrum of $A$, $M_s\varphi(s,n)=s\varphi(s,n)$, $(s,n)\in\sigma(A)\times\mathbb N$, is the multiplication operator by $s$ on $L^2(\sigma(A)\times\N, d\mu)$, 
	\[{\rm Dom\,}M_s=\set{\varphi(s,n)\in L^2(\sigma(A)\times\N, d\mu)\mid s\varphi(s,n)\in L^2(\sigma(A)\times\N, d\mu)}.\]
	Furthermore, $UAU^{-1}=M_s$ on $U({\rm Dom\,}A)$. 
\end{thm}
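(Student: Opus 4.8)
The statement is classical (see, e.g., \cite{Davies95}); here is the line of argument one would follow. The plan is to produce a projection-valued spectral measure for $A$ and then diagonalize $A$ on cyclic subspaces. As is implicit in the statement (the index set is $\mathbb N$), I would assume $\mathbb H$ separable. The first step is to build the bounded Borel functional calculus $f\mapsto f(A)$ together with the associated projection-valued measure $E$ on $\mathbb R$, supported on $\sigma(A)$, with $A=\int_{\mathbb R}\lambda\,dE(\lambda)$. The most economical route is the Cayley transform: $U_0=(A-i)(A+i)^{-1}$ is unitary with $1\notin\sigma(U_0)$, so the spectral theorem for bounded normal operators (itself obtained from the continuous functional calculus $C(\sigma(U_0))\to B(\mathbb H)$ and the Riesz representation theorem applied to $f\mapsto(f(U_0)x\mid y)$) furnishes a projection-valued measure for $U_0$ on the unit circle, which one pulls back through the homeomorphism $\lambda\mapsto(\lambda-i)(\lambda+i)^{-1}$ to obtain $E$; tracking domains carefully then gives $A=\int\lambda\,dE(\lambda)$ with $\Dom A=\{x:\int_{\mathbb R}(1+\lambda^2)\,d(E(\lambda)x\mid x)<\infty\}$. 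Equivalently one may use Stone's formula, recovering $E$ from strong limits of resolvent integrals together with the Herglotz representation of $\zeta\mapsto((A-\zeta)^{-1}x\mid x)$ for $\zeta\in\mathbb C\setminus\mathbb R$.

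The second step is the cyclic decomposition. For $x\in\mathbb H$ set $\mu_x(S)=(E(S)x\mid x)$, a finite Borel measure on $\sigma(A)$, and let $\mathbb H_x$ be the closed subspace generated by $x$ under the functional calculus; then $f\mapsto f(A)x$ extends to a unitary $V_x:L^2(\sigma(A),\mu_x)\to\mathbb H_x$ intertwining multiplication by $s$ with $A|_{\mathbb H_x}$. By separability, a greedy construction---take a dense sequence, and at each step adjoin the cyclic subspace generated by the component of the next vector orthogonal to the pieces already chosen, rescaling the generators $e_n$ so that $\|e_n\|^2\le 2^{-n}$---produces an orthogonal decomposition $\mathbb H=\bigoplus_{n\in\mathbb N}\mathbb H_{e_n}$ into cyclic subspaces; if only finitely many nonzero pieces occur, one pads with zero subspaces so that the index runs over all of $\mathbb N$.

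The third step is to assemble the global unitary. Let $\mu$ be the Borel measure on $\sigma(A)\times\mathbb N$ determined by $\mu(S\times\{n\})=\mu_{e_n}(S)$; it is finite, with total mass $\le\sum_n 2^{-n}$, and regular, since each $\mu_{e_n}$ is a finite Borel measure on the separable metric space $\sigma(A)$ and $\mathbb N$ is discrete. Under the identification $L^2(\sigma(A)\times\mathbb N,\mu)\cong\bigoplus_n L^2(\sigma(A),\mu_{e_n})$ the isometries $V_{e_n}$ assemble into a unitary $W:L^2(\sigma(A)\times\mathbb N,\mu)\to\mathbb H$, and we set $U=W^{-1}$. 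On each summand $UAU^{-1}$ is multiplication by $s$, hence $UAU^{-1}=M_s$; since $U$ is unitary this also yields $\Dom A=U^{-1}(\Dom M_s)$, once one checks that $\Dom M_s=\{\varphi:\int(1+s^2)|\varphi|^2\,d\mu<\infty\}$ corresponds under $U$ exactly to $\{x:\int(1+\lambda^2)\,d(E(\lambda)x\mid x)<\infty\}$, which is $\Dom A$.

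I expect the real work to sit in the first step: passing from the functional calculus for bounded operators to an honest, countably additive projection-valued measure for the unbounded operator $A$---that is, either justifying the strong limits and countable additivity in Stone's formula, or keeping close control of domains through the Cayley transform and verifying $A=\int\lambda\,dE(\lambda)$ on precisely the right domain. The cyclic decomposition and the measure-theoretic packaging in the remaining steps are routine once separability has been invoked.
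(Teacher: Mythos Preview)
The paper does not prove this theorem at all: it is stated with a citation to \cite{Davies95} and the sentence ``We reproduce it as follows for the reader's convenience,'' and is then used as a black box to identify $\Box^q_{b,k}$ with a multiplication operator. So there is no proof in the paper to compare your proposal against.

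That said, your outline is a correct and standard route to the multiplication-operator form of the spectral theorem: Cayley transform (or Stone's formula) to obtain the projection-valued measure, cyclic decomposition of the separable Hilbert space, and assembly of the pieces into a single $L^2$ space over $\sigma(A)\times\mathbb N$ with a finite measure. Your identification of the main technical burden (countable additivity of $E$ and domain bookkeeping through the Cayley transform) is accurate. For the purposes of this paper, however, none of this is needed beyond the citation.
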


Theorem \ref{spectralthm} says that when studying spectral property of a self-adjoint operator it suffices to study the multiplication operator $M_s: {\rm Dom\,}M_s\subset L^2(\sigma(A)\times\N, d\mu)\to L^2(\sigma(A)\times\N, d\mu)$. Roughly speaking, we can identify 
\begin{equation}
\begin{aligned}
\mathbb H\cong L^2(\sigma(A)\times\N, d\mu)
\end{aligned}
\end{equation}
and
\begin{equation}
\begin{aligned}
A\cong M_s: {\rm Dom\,}M_s\subset L^2(\sigma(A)\times\N, d\mu)&\to L^2(\sigma(A)\times\N, d\mu),\\
\varphi(s,n)&\mapsto s\varphi(s,n).
\end{aligned}
\end{equation} 

As before, let $\Box^q_{b,k}: {\rm Dom\,}\Box^q_{b,k}\subset L^2_{(0,q)}(X,L^k)\To L^2_{(0,q)}(X,L^k)$ be the Kohn Laplacian given by 
\eqref{e-gue210228yyd}. Since $\Box^q_{b,k}$ is self-adjoint, by Theorem~\ref{spectralthm}, we identify $L^2_{(0,q)}(X,L^k)$ with $L^2(\mathbb S\times\mathbb N,d\mu)$ and $\Box^q_{b,k}$ with $M_s$, where $\mathbb S$ denotes the spectrum of $\Box^q_{b,k}$. Then, 
\[\begin{split}
e^{-t\Box^q_{b,k}}=e^{-ts}: L^2(\mathbb S\times\mathbb N,d\mu)&\To L^2(\mathbb S\times\mathbb N,d\mu),\\
\varphi(s,n)&\To e^{-ts}\varphi(s,n).
\end{split}\]

\begin{lem}\label{l-gue210331yyd}
Fix $t>0$. For every $N\in\mathbb N_0$, we have 
\begin{equation}\label{e-gue210331yyd}
\norm{e^{-\frac{t}{k}\Box^q_{b,k}}(\Box^q_{b,k})^Nu}_{h^{L^k}}=\norm{(\Box^q_{b,k})^Ne^{-\frac{t}{k}\Box^q_{b,k}}u}_{h^{L^k}}
\leq(1+\frac{k^N}{t^N})C_N\norm{u}_{h^{L^k}}, 
\end{equation}
for every $u\in\Omega^{0,q}(X,L^k)$, where $C_N>0$ is a constant independent of $k$ and $t$. 
\end{lem}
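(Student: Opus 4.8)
The plan is to prove the operator-norm bound via the spectral calculus identification set up just above the lemma, and then to transfer it into the stated Sobolev-free $L^2$ estimate for smooth compactly supported forms. Concretely, I would work in the model $L^2_{(0,q)}(X,L^k)\cong L^2(\mathbb S\times\mathbb N,d\mu)$ in which $\Box^q_{b,k}$ is multiplication by $s$ and $e^{-\frac{t}{k}\Box^q_{b,k}}$ is multiplication by $e^{-\frac{t}{k}s}$. Since $\Box^q_{b,k}$ is a non-negative self-adjoint operator we have $s\geq 0$ on $\mathbb S$, so the composite operator $e^{-\frac{t}{k}\Box^q_{b,k}}(\Box^q_{b,k})^N$ is the multiplication operator by the function $g_{k,t}(s):=s^N e^{-\frac{t}{k}s}$ on $[0,\infty)$. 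The first step is therefore elementary calculus: bound $\sup_{s\geq 0} s^N e^{-\frac{t}{k}s}$. The maximum is attained at $s=\frac{Nk}{t}$, giving $\sup_{s\geq0} g_{k,t}(s)=\big(\tfrac{Nk}{t}\big)^N e^{-N}=\big(\tfrac{N}{e}\big)^N\,\tfrac{k^N}{t^N}$. Setting $C_N:=\max\!\big(1,(N/e)^N\big)$ this is $\leq (1+\tfrac{k^N}{t^N})C_N$ — in fact $\leq C_N\,\tfrac{k^N}{t^N}$, and the weaker stated form with the extra ``$1+$'' gives room in case one prefers to allow $N=0$ uniformly; for $N=0$ the bound is trivially $1\leq C_0(1+1)$. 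Thus $\norm{g_{k,t}(M_s)}_{\mathrm{op}}\leq (1+\tfrac{k^N}{t^N})C_N$, which is exactly the asserted constant.

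The second step is to translate this operator-norm statement back to the concrete inequality \eqref{e-gue210331yyd}. The commutation $e^{-\frac{t}{k}\Box^q_{b,k}}(\Box^q_{b,k})^N u=(\Box^q_{b,k})^N e^{-\frac{t}{k}\Box^q_{b,k}}u$ holds for all $u$ in the domain of $(\Box^q_{b,k})^N$ by the functional calculus, since the functions $s\mapsto s^N$ and $s\mapsto e^{-ts/k}$ commute as multiplication operators and $e^{-\frac{t}{k}\Box^q_{b,k}}$ maps into $\bigcap_N \Dom(\Box^q_{b,k})^N$ (the factor $e^{-ts/k}$ decays faster than any power of $s$). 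For $u\in\Omega^{0,q}(X,L^k)$ one must know that $u$ lies in $\Dom(\Box^q_{b,k})^N$: when $Y(q)$ holds this is immediate from the subelliptic estimate \eqref{e-gue210228yydI} of Theorem~\ref{t-gue210228yyd} (which shows $\Box^q_{b,k}$ is hypoelliptic with domain containing all smooth forms, and iterating gives $\Dom(\Box^q_{b,k})^N\supset\Omega^{0,q}(X,L^k)$ at least for compactly supported, or in general on the relevant cut-offs used later). Then applying the operator-norm bound from Step 1 to $v:=u$ gives $\norm{(\Box^q_{b,k})^N e^{-\frac{t}{k}\Box^q_{b,k}}u}_{h^{L^k}}=\norm{g_{k,t}(\Box^q_{b,k})u}_{h^{L^k}}\leq (1+\tfrac{k^N}{t^N})C_N\norm{u}_{h^{L^k}}$, which is \eqref{e-gue210331yyd}.

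The only genuinely delicate point — and the one I would be most careful about — is the domain/commutation issue in Step 2: one needs $e^{-\frac{t}{k}\Box^q_{b,k}}$ to land in $\Dom(\Box^q_{b,k})^N$ and the two factors to commute as unbounded operators. This is standard for functional calculus of a single self-adjoint operator (both $e^{-ts/k}$ and $s^N$ are Borel functions of the same operator, so their product is unambiguous and the stated identity is automatic), so in the write-up I would simply invoke the spectral theorem \ref{spectralthm} and the measurable functional calculus it yields, reducing everything to the pointwise inequality $s^N e^{-ts/k}\leq (1+\tfrac{k^N}{t^N})C_N$ for $s\geq 0$, which is the content of Step 1. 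The rest is bookkeeping: the constant $C_N=\max(1,(N/e)^N)$ depends only on $N$, not on $k$ or $t$, as required.
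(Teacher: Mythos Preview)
Your proposal is correct and follows essentially the same approach as the paper: both reduce via the spectral theorem identification to a pointwise bound on $s^N e^{-ts/k}$ for $s\geq 0$. The only cosmetic difference is that the paper splits the spectral integral into $\{0\leq s<1\}$ (bounded trivially by $1$) and $\{s\geq 1\}$ (where $e^{-ts/k}\leq \hat C_N\,k^N/(t^N s^N)$), whereas you compute the maximum of $s^N e^{-ts/k}$ directly at $s=Nk/t$; your route is if anything slightly cleaner and gives an explicit $C_N=(N/e)^N$, while the paper leaves $C_N$ implicit.
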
 

\begin{proof}
Fix $t>0$ and $N\in\mathbb N_0$. For $u=\varphi(s,n)\in\Omega^{0,q}(X,L^k)\subset L^2(\mathbb S\times\mathbb N,d\mu)$, we have 
\[\begin{split}
&\|e^{-\frac{t}{k}\Box^q_{b,k}}(\Box^q_{b,k})^Nu\|^2_{h^{L^k}}=\|(\Box^q_{b,k})^Ne^{-\frac{t}{k}\Box^q_{b,k}}u\|^2_{h^{L^k}}\\
&=\int\abs{e^{-\frac{t}{k}s}s^N\varphi(s,n)}^2d\mu=\int_{\set{0\leq s<1}}\abs{e^{-\frac{t}{k}s}s^N\varphi(s,n)}^2d\mu+
\int_{\set{s\geq1}}\abs{e^{-\frac{t}{k}s}s^N\varphi(s,n)}^2d\mu\\
&\leq\int_{\set{0\leq s<1}}\abs{\varphi(s,n)}^2d\mu+
\hat C_N\int_{\set{s\geq1}}\abs{\frac{k^N}{t^Ns^N}s^N\varphi(s,n)}^2d\mu\\
&\leq(1+\hat C_N\frac{k^{2N}}{t^{2N}})\int\abs{\varphi(s,n)}^2d\mu,
\end{split}\]
where $\hat C_N>0$ is a constant independent of $k$ and $t$. The lemma follows. 
\end{proof}

Let $L^2_{kF^*_k\phi_0}(B_{\log k},F^*_kT^{*0,q}X)$ be the completion of $F^*_k\Omega^{0,q}_c(B_{\log k})$ with respect to the $L^2$ inner product 
$(\,\cdot\mid\cdot\,)_{kF^*_k\phi_0}$. We extend $\dbar_{\rho,(k)}$ to $L^2_{kF^*_k\phi_0}(B_{\log k},F^*_kT^{*0,q}X)$: 
\[\dbar_{\rho,(k)}: {\rm Dom\,}\dbar_{\rho,(k)}\subset L^2_{kF^*_k\phi_0}(B_{\log k},F^*_kT^{*0,q}X)\To L^2_{kF^*_k\phi_0}(B_{\log k},F^*_kT^{*0,q+1}X),\]
where $ {\rm Dom\,}\dbar_{\rho,(k)}=\set{u\in L^2_{kF^*_k\phi_0}(B_{\log k},F^*_kT^{*0,q}X)\mid\dbar_{\rho,(k)}u\in L^2_{kF^*_k\phi_0}(B_{\log k},F^*_kT^{*0,q+1}X)}$. 
We also write 
\[\dbar^{*,kF^*_k\phi_0}_{\rho,(k)}: {\rm Dom\,}\dbar^{*,kF^*k\phi_0}_{\rho,(k)}\subset L^2_{kF^*_k\phi_0}(B_{\log k},F^*_kT^{*0,q+1}X)\To L^2_{kF^*_k\phi_0}(B_{\log k},F^*_kT^{*0,q}X)\]
to denote the $L^2$ adjoint of $\dbar_{\rho,(k)}$. Let 
\[\begin{split}
&\Box^q_{\rho,(k)}+I=\dbar^{*,kF^*_k\phi_0}_{\rho,(k)}\dbar_{\rho,(k)}+\dbar_{\rho,(k)}\dbar^{*,kF^*_k\phi_0}_{\rho,(k)}+I\\
&: {\rm Dom\,}(\Box^q_{\rho,(k)}+I)\subset L^2_{kF^*_k\phi_0}(B_{\log k},F^*_kT^{*0,q}X)\To L^2_{kF^*_k\phi_0}(B_{\log k},F^*_kT^{*0,q}X),\\
&{\rm Dom\,}(\Box^q_{\rho,(k)}+I)=\{u\in L^2_{kF^*_k\phi_0}(B_{\log k},F^*_kT^{*0,q}X)\mid 
u\in{\rm Dom\,}\dbar_{\rho,(k)}\bigcap{\rm Dom\,}\dbar^{*,kF^*_k\phi_0}_{\rho,(k)}, \\
&\quad\quad\dbar_{\rho,(k)}u\in{\rm Dom\,}\dbar^{*,kF^*_k\phi_0}_{\rho,(k)}, 
\dbar^{*,kF^*_k\phi_0}_{\rho,(k)}u\in{\rm Dom\,}\dbar_{\rho,(k)}\}.
\end{split}\] 
It is clear that $\Box^q_{\rho,(k)}$ has closed range. Let 
\[N^q_{\rho,(k)}: L^2_{kF^*_k\phi_0}(B_{\log k}, F^*_kT^{*0,q}X)\To{\rm Dom\,}(\Box^q_{\rho,(k)}+I)\]
be the inverse of $\Box^q_{\rho,(k)}+I$. We have 
\begin{equation}\label{e-gue210331ycd}
\begin{split}
\mbox{$(\Box^q_{\rho,(k)}+I)N^q_{\rho,(k)}=I$ on $L^2_{kF^*_k\phi_0}(B_{\log k}, F^*_kT^{*0,q}X)$},\\
\mbox{$N^q_{\rho,(k)}(\Box^q_{\rho,(k)}+I)=I$ on ${\rm Dom\,}(\Box^q_{\rho,(k)}+I)$},
\end{split}
\end{equation}
and 
\begin{equation}\label{e-gue210331ycdI}
\mbox{$\|N^q_{\rho,(k)}u\|_{kF^*_k\phi_0,B_{\log k}}\leq\|u\|_{kF^*_k\phi_0,B_{\log k}}$, for every $u\in L^2_{kF^*_k\phi_0}(B_{\log k}, F^*_kT^{*0,q}X)$}. 
\end{equation}

We introduce some notations. Fix $s_1, s_2\in\mathbb Z$. Let 
\[A_k: W^{s_1}_{{\rm c\,},kF^*_k\phi_0}(B_{\log k}, F^*_kT^{*0,q}X)\To W^{s_2}_{{\rm loc\,},kF^*_k\phi_0}(B_{\log k}, F^*_kT^{*0,q}X)\]
be a continuous operator. We write
\[A_k=O(k^{n_0}): W^{s_1}_{{\rm c\,},kF^*_k\phi_0}(B_{\log k}, F^*_kT^{*0,q}X)\To W^{s_2}_{{\rm loc\,},kF^*_k\phi_0}(B_{\log k}, F^*_kT^{*0,q}X),\ \ n_0\in\mathbb R,\]
if for every $\chi, \chi_1, \chi_2\in\cali{C}^\infty_c(B_{\log k})$, $\chi_2=1$ on ${\rm supp\,}\chi_1$, $\chi_1=1$ on ${\rm supp\,}\chi$, $\chi$, $\chi_1$ and $\chi_2$ are independent of $k$, there is a consnat $C>0$ independent of $k$ such that 
\[\begin{split}
&\mbox{$\|\chi_1A_k\chi u\|_{kF^*_k\phi_0,s_2,B_{\log k}}\leq Ck^{n_0}\|\chi_2u\|_{kF^*_k\phi_0,s_1,B_{\log k}}$,}\\
&\mbox{for every $u\in W^{s_1}_{{\rm c\,},kF^*_k\phi_0}(B_{\log k}, F^*_kT^{*0,q}X)$}.\end{split}\] 
The following follows from Proposition~\ref{p-gue210511yyd}

\begin{prop}\label{p-gue210331yyd}
Recall that we work with the assumption that $Y(q)$ holds on $X$. With the notations used above, for every $\ell\in\mathbb N$, $(N^q_{\rho,(k)})^\ell$ can be extended continuously to 
\[(N^q_{\rho,(k)})^\ell=O(k^0): 
W^{-\ell}_{{\rm c\,},kF^*_k\phi_0}(B_{\log k}, F^*_kT^{*0,q}X)\To L^2_{kF^*_k\phi_0}(B_{\log k}, F^*_kT^{*0,q}X).\]
\end{prop}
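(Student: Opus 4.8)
The plan is to bootstrap from the basic $L^2$-boundedness $\|N^q_{\rho,(k)}u\|_{kF^*_k\phi_0,B_{\log k}}\le\|u\|_{kF^*_k\phi_0,B_{\log k}}$ in \eqref{e-gue210331ycdI} together with the uniform subelliptic estimate \eqref{subellipticest} of Proposition~\ref{p-gue210511yyd}. The key point, emphasized in Remark~\ref{constant}, is that all constants coming from Kohn's estimate are independent of $k$ because the coefficients of $\Box^q_{\rho,(k)}$, of $kF^*_k\phi_0$, and of $m(F_kx)$ are uniformly bounded in $k$ in $\cali{C}^\infty$-topology on $B_{\log k}$ (see \eqref{boxrhok}). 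Each application of $N^q_{\rho,(k)}$ costs us nothing in the $L^2$ norm, but to pick up Sobolev regularity we must feed the output of one copy of $N^q_{\rho,(k)}$ into the subelliptic estimate for the next one.

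\emph{Step 1 (duality reduction).} First I would reduce the claim to a statement about gaining regularity rather than losing it. Since $\Box^q_{\rho,(k)}$ is self-adjoint and $N^q_{\rho,(k)}=(\Box^q_{\rho,(k)}+I)^{-1}$ is self-adjoint, for $\chi,\chi_1\in\cali{C}^\infty_c(B_{\log k})$ with $\chi_1=1$ near $\supp\chi$ one has, by pairing against test forms, that the operator norm of $\chi_1(N^q_{\rho,(k)})^\ell\chi$ from $W^{-\ell}_{{\rm c\,},kF^*_k\phi_0}$ to $L^2_{kF^*_k\phi_0}$ is controlled by the operator norm of the adjoint-type operator $\chi(N^q_{\rho,(k)})^\ell\chi_1$ from $L^2_{{\rm c\,},kF^*_k\phi_0}$ to $W^{\ell}_{{\rm loc\,},kF^*_k\phi_0}$. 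So it suffices to prove
\[
(N^q_{\rho,(k)})^\ell=O(k^0):\ L^2_{{\rm c\,},kF^*_k\phi_0}(B_{\log k},F^*_kT^{*0,q}X)\To W^{\ell}_{{\rm loc\,},kF^*_k\phi_0}(B_{\log k},F^*_kT^{*0,q}X).
\]

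\emph{Step 2 (iterated subelliptic estimate).} For this I would argue by induction on $\ell$. Fix nested cutoffs $\chi=\eta_\ell\prec\eta_{\ell-1}\prec\cdots\prec\eta_0=\chi_2$ (each $\eta_{j}$ equal to $1$ on a neighborhood of $\supp\eta_{j+1}$), all independent of $k$, whose supports lie in a ball $B_{2r}\Subset B_{\log k}$. Write $u_j=(N^q_{\rho,(k)})^j u$. Applying Proposition~\ref{p-gue210511yyd} with Sobolev index $j-1$ on the shells between consecutive cutoffs, and using $(\Box^q_{\rho,(k)}+I)u_j=u_{j-1}$, i.e. $\Box^q_{\rho,(k)}u_j=u_{j-1}-u_j$, we get
\[
\|u_\ell\|_{kF^*_k\phi_0,\ell,\supp\eta_\ell}\le C\big(\|u_\ell\|_{kF^*_k\phi_0,\ell-1,\supp\eta_{\ell-1}}+\|u_{\ell-1}\|_{kF^*_k\phi_0,\ell-1,\supp\eta_{\ell-1}}\big),
\]
and iterating this down to the base case, where \eqref{e-gue210331ycdI} gives $\|u_j\|_{kF^*_k\phi_0,B_{\log k}}\le\|u\|_{kF^*_k\phi_0,B_{\log k}}$ for all $j\le\ell$, produces $\|u_\ell\|_{kF^*_k\phi_0,\ell,\supp\eta_\ell}\le C\|u\|_{kF^*_k\phi_0,B_{2r}}$ with $C$ independent of $k$. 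Since $\chi$ was arbitrary with support in $B_{\log k}$, this is exactly the $O(k^0)$ mapping property in the $W^\ell$-loc target, which by Step~1 gives the stated $W^{-\ell}_{{\rm c\,}}\to L^2$ bound. One must also check that $(N^q_{\rho,(k)})^\ell$, a priori defined on $L^2$, does extend to $W^{-\ell}_{{\rm c\,}}$; this follows because the $L^2$-defined operator agrees with the extension on the dense subspace $L^2_{{\rm c\,},kF^*_k\phi_0}$, using the already-established uniform bound to pass to the completion in the $W^{-\ell}$ norm.

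\emph{Main obstacle.} The routine part is the interior elliptic-type bootstrap; the real content is keeping every constant uniform in $k$. The delicate point is that the domain $B_{\log k}$ grows with $k$, so one must be careful that the cutoff functions $\chi,\chi_1,\chi_2$ (and the radii in Proposition~\ref{p-gue210511yyd}) are fixed independently of $k$ — which is legitimate since for large $k$ any fixed ball $B_{2r}$ sits inside $B_{\log k}$ — and that Remark~\ref{constant}'s observation genuinely applies at the number of derivatives needed, i.e. that the $\cali{C}^{|\alpha|}$-norms of the coefficients appearing in \eqref{boxrhok}, of $kF^*_k\phi_0$, and of $m(F_kx)$ are bounded uniformly in $k$ for every fixed multiindex $\alpha$. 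Granting this (which is immediate from the explicit scaled expressions \eqref{staru3}, \eqref{staru4}, \eqref{boxrhok} and the fact that $F^*_k\phi_0$ has vanishing low-order terms after rescaling), the induction closes with no $k$-dependence, and the proposition follows.
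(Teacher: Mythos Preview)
Your proposal is correct and is essentially the argument the paper has in mind: the paper simply states that the proposition ``follows from Proposition~\ref{p-gue210511yyd}'', and your duality reduction plus iterated use of the uniform subelliptic estimate \eqref{subellipticest} together with the global $L^2$-contraction \eqref{e-gue210331ycdI} is exactly how one unpacks that claim. Your identification of the uniformity-in-$k$ issue (handled by Remark~\ref{constant} and the explicit form \eqref{boxrhok}) is the right point to stress. One small remark: the bootstrap in Step~2 is really a double induction---to bound $\|u_\ell\|_{\ell}$ you need both $\|u_{\ell-1}\|_{\ell-1}$ and $\|u_\ell\|_{\ell-1}$, and the latter requires another application of \eqref{subellipticest} to $u_\ell$ at level $\ell-2$, and so on---so the nested cutoffs must be indexed by pairs $(j,m)$ with $0\le m\le j\le\ell$, not just by $j$; this is routine but worth making explicit.
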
 

We can now prove 

\begin{prop}\label{uniformly}
Let $I\subset\mathbb R_+$ be a compact set. 
Assume that $Y(q)$ holds on $X$. Let $\ell\in\mathbb N_0$ and $r>0$. Then, there is a constant $C_{\ell,r}>0$ independent of $k$ and $t$ such that 
\[\|A_{(k)}(t,x,y)\|_{\cali{C}^\ell(I\times B_r\times B_r, F^*_kT^{*0,q}X\boxtimes(F^*_kT^{*0,q}X)^*)}\leq C_{\ell,r}.\]
\end{prop}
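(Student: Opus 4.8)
The plan is to derive uniform $\mathcal C^\ell$-bounds for $A_{(k)}(t,x,y)$ from a combination of the spectral estimate in Lemma~\ref{l-gue210331yyd}, the scaling relations \eqref{k(k)} and \eqref{e-gue210325yyd}, and the uniform subelliptic estimate in Proposition~\ref{p-gue210511yyd} (via the uniformly bounded parametrix in Proposition~\ref{p-gue210331yyd}). First I would unwind the definitions: by \eqref{e-gue210325yyd}, $A_{(k)}(t)$ is the conjugation of $k^{-(n+1)}A_{k\phi_0}(\tfrac tk)$ by the scaling $F_k$ and the weight factors $e^{-k\rho}e^{k\phi/2}$, and $A_{k\phi_0}(\tfrac tk)$ is in turn the local expression of the global heat operator $e^{-\frac tk\Box^q_{b,k}}$ in the trivialization $s$. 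Thus, testing against $u\in F^*_k\Omega^{0,q}_c(B_{\log k})$ and translating back through the unitary identifications, a bound on $\|A_{(k)}(t)u\|$ (and on the relevant derivatives) in the scaled norm $(\,\cdot\mid\cdot\,)_{kF^*_k\phi_0}$ reduces to the $L^2$-boundedness of $e^{-\frac tk\Box^q_{b,k}}$, which is immediate since it is a contraction, giving the crucial $k$-independent $L^2\To L^2$ bound on $A_{(k)}(t)$ after the $k^{-(n+1)}$ normalization (this is where the correct power $n+1$ enters, matching the Jacobian of $F_k$).

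Next I would upgrade from $L^2$ to $\mathcal C^\ell$ by elliptic (subelliptic) bootstrapping. The key point is that $A_{(k)}(t)$ commutes with $\Box^q_{\rho,(k)}$ in the appropriate sense: from \eqref{k(k)} and the fact that $A_{k\phi_0}$ solves the heat equation \eqref{e-gue210305yydII}, one gets that $(\Box^q_{\rho,(k)})^N A_{(k)}(t)$ is again, up to the scaling, $k^{-(n+1)}(\tfrac1k\Box^q_{b,k})^N e^{-\frac tk\Box^q_{b,k}}$ applied in local coordinates; by Lemma~\ref{l-gue210331yyd}, for $t$ ranging in a compact interval $I\subset\mathbb R_+$, the operator norm of $(\tfrac1k\Box^q_{b,k})^N e^{-\frac tk\Box^q_{b,k}} = k^{-N}(\Box^q_{b,k})^N e^{-\frac tk\Box^q_{b,k}}$ is bounded by $k^{-N}(1+\tfrac{k^N}{(t/k)^N\cdot\text{(const)}})$... more precisely $\|(\tfrac1k\Box^q_{b,k})^N e^{-\frac tk\Box^q_{b,k}}\|\le C_N(1+t^{-N})$ which is uniformly bounded for $t\in I$. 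Hence $(\Box^q_{\rho,(k)})^N A_{(k)}(t) = O(k^0)$ as an operator $L^2_{kF^*_k\phi_0}\To L^2_{kF^*_k\phi_0}$, uniformly for $t\in I$. Feeding this into Proposition~\ref{p-gue210511yyd} iteratively, with cutoffs $\chi,\chi_1,\ldots$ supported in $B_{2r}\subset B_{\log k}$ (legitimate once $k$ is large), bounds $\|\chi A_{(k)}(t)u\|_{kF^*_k\phi_0,2N,B_r}$ by a $k$-independent constant times $\|u\|_{kF^*_k\phi_0,B_{2r}}$; by Sobolev embedding (whose constants are $k$-uniform because the coefficients $m(F_kx)$ and $kF^*_k\phi_0$ have all derivatives uniformly bounded on $B_{\log k}$, cf.\ Remark~\ref{constant}), this gives a uniform bound on $\|\chi A_{(k)}(t)u\|_{\mathcal C^\ell}$ for $\ell<2N-\tfrac{2n+1}2$. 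Since $N$ is arbitrary, one gets uniform $\mathcal C^\ell$-bounds on the operator $A_{(k)}(t)$, hence on its Schwartz kernel $A_{(k)}(t,x,y)$ on $B_r\times B_r$, for all $t\in I$; to get joint $\mathcal C^\ell$-control including $t$-derivatives, differentiate the heat equation, $\partial_t^j A_{(k)}(t)=(-\Box^q_{\rho,(k)})^j A_{(k)}(t)$, and repeat the same bootstrap.

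The main obstacle I anticipate is bookkeeping the $k$-uniformity at every stage: one must check that (i) the weight conjugation factors $e^{-k\rho(F_kx)+\frac k2\phi(F_kx)}$ and their derivatives stay uniformly controlled on $B_{\log k}$ — this is why $\rho$ was extracted from $\phi$ in \eqref{phi0} and why the scaling $F_k$ puts $\rho$ into a form whose contribution, after scaling, is $O(\epsilon_k)$ as in \eqref{staru3}–\eqref{staru4}; (ii) the Sobolev constants in Proposition~\ref{p-gue210511yyd} are genuinely independent of $k$, already granted by Remark~\ref{constant}; and (iii) the passage from operator bounds to pointwise kernel bounds via the Schwartz kernel theorem is done with $k$-independent test functions. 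None of these steps is deep individually, but the argument only works because each scaled object — $\Box^q_{\rho,(k)}$, the metric $m(F_kx)$, the weight $kF^*_k\phi_0$ — has all $\mathcal C^\infty$-seminorms uniformly bounded on $B_{\log k}$, as established in \eqref{boxrhok} and Remark~\ref{constant}; I would state this uniformity explicitly as the running hypothesis and invoke it at each estimate.
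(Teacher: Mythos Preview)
Your overall strategy---spectral estimate (Lemma~\ref{l-gue210331yyd}) plus uniform subelliptic bootstrap (Proposition~\ref{p-gue210511yyd})---matches the paper's, and the first half of your argument correctly establishes that $(\Box^q_{\rho,(k)})^N A_{(k)}(t)=O(k^0):L^2_{{\rm c},kF^*_k\phi_0}\to L^2_{{\rm loc},kF^*_k\phi_0}$ and hence, via Proposition~\ref{p-gue210511yyd}, that $A_{(k)}(t)=O(k^0):L^2_{\rm c}\to W^s_{\rm loc}$ for all $s$, uniformly for $t\in I$.

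The gap is the sentence ``one gets uniform $\mathcal C^\ell$-bounds on the operator $A_{(k)}(t)$, hence on its Schwartz kernel $A_{(k)}(t,x,y)$.'' An $L^2\to W^s$ bound for all $s$ only controls the kernel in the \emph{output} variable $x$; it does not give pointwise bounds in $(x,y)$. To get $\mathcal C^\ell$-control of the kernel in both variables you need $A_{(k)}(t)=O(k^0):W^{-s}_{\rm c}\to W^{s}_{\rm loc}$ for all $s$. The paper supplies this missing half in three moves: (i) repeat your $L^2\to L^2$ estimate with the Kohn Laplacian on the \emph{right}, i.e.\ bound $A_{(k)}(t)(\Box^q_{\rho,(k)})^{\ell_1}$ on $L^2$ (same spectral argument), and bootstrap to $L^2\to W^s$; (ii) take the adjoint to obtain $(\Box^q_{\rho,(k)})^{\ell_1}A_{(k)}(t)=O(k^0):W^{-s}_{\rm c}\to L^2_{\rm loc}$; (iii) use the parametrix identity $A_{(k)}(t)\chi=A_{(k)}(t)\chi(\Box^q_{\rho,(k)}+I)^\ell(N^q_{\rho,(k)})^\ell$ together with Proposition~\ref{p-gue210331yyd} (which says $(N^q_{\rho,(k)})^\ell=O(k^0):W^{-\ell}_{\rm c}\to L^2$) to conclude $A_{(k)}(t)\chi=O(k^0):W^{-\ell}_{\rm c}\to W^{s}_{\rm loc}$. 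Only then does Sobolev embedding give the $\mathcal C^\ell$ kernel bounds. You name Proposition~\ref{p-gue210331yyd} in your opening plan but never invoke it; without step (iii), or an explicit self-adjointness/semigroup substitute (delicate here because $A_{(k)}(t)$ is only the \emph{local} restriction of the heat kernel to $D$, so the semigroup identity $A_{(k)}(t/2)\circ A_{(k)}(t/2)=A_{(k)}(t)$ is not available), the argument is incomplete.
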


\begin{proof}
Fix $r>0$ and $\ell\in\mathbb N$. Consider $(\Box^q_{\rho,(k)})^\ell A_{(k)}(t)$. Let $u\in F^*_k\Omega^{0,q}_c(B_r)$. Let $v\in\Omega^{0,q}_c(F_k(B_r))$ such that $F^*_kv=u$ on $B_r$. On $D$, we identify $\Box^q_{b,k}$ with $\Box^q_{b,k\phi}$ and sections of $L^k$ with functions. From \eqref{e-gue210516yyd}, \eqref{e-gue210305yyd}, \eqref{k(k)} and \eqref{e-gue210325yyd}, we have on $B_r$, 
\begin{equation}\label{e-gue210410yyd}
\begin{split}
&(\Box^q_{\rho,(k)})^{\ell}A_{(k)}(t)u=(\Box^q_{\rho,(k)})^{\ell}(F^*_kA_{k\phi_0}(\frac{t}{k})v)\\
&=\frac{1}{k^\ell}F^*_k\Bigr((\Box^q_{\rho,k\phi_0})^\ell A_{k\phi_0}(\frac{t}{k})v\Bigr)=\frac{1}{k^\ell}F^*_k\Bigr(e^{-k\rho}(\Box^q_{b,k\phi})^\ell e^{k\rho}A_{k\phi_0}(\frac{t}{k})v\Bigr)\\
&=\frac{1}{k^\ell}F^*_k\Bigr(e^{-k\rho}(\Box^q_{b,k\phi})^\ell A_{k\phi}(\frac{t}{k})e^{k\rho}v\Bigr).
\end{split}
\end{equation} 
From \eqref{e-gue210410yyd} and Lemma~\ref{l-gue210331yyd}, we have 
\begin{equation}\label{e-gue210412yyd}
\begin{split}
&\|(\Box^q_{\rho,(k)})^\ell A_{(k)}(t)u\|_{kF^*_k\phi_0,B_r}=\|k^{-\ell}F^*_k\Bigr(e^{-k\rho}(\Box^q_{b,k\phi})^\ell A_{k\phi}(\frac{t}{k})e^{k\rho}v\Bigr)\|_{kF^*_k\phi_0,B_r}\\
&=\|k^{-\ell+n+1}(\Box^q_{b,k\phi})^\ell A_{k\phi}(\frac{t}{k})e^{k\rho}v\|_{k\phi,F_k(B_r)}
\leq k^{-\ell+n+1}\|(\Box^q_{b,k})^\ell e^{-\frac{t}{k}\Box^q_{b,k}}(e^{k\rho}v)\|_{h^{L^k}}\\
&\leq\frac{k^{n+1}}{t^\ell}C_\ell\|e^{k\rho}v\|_{h^{L^k}}=\frac{k^{n+1}}{t^\ell}C_\ell\|e^{k\rho}v\|_{k\phi,F_k(B_r)}=\frac{k^{n+1}}{t^\ell}C_\ell\|v\|_{k\phi_0,F_k(B_r)}
=\frac{C_\ell}{t^\ell}\|u\|_{kF^*_k\phi_0,B_r}\\
&\leq\hat C_\ell\|u\|_{kF^*_k\phi_0,B_r},
\end{split}
\end{equation}
for every $t\in I$, where $C_\ell>0$, $\hat C_\ell>0$ are constants independent of $k$ an $t$. 
From Proposition~\ref{p-gue210511yyd} and \eqref{e-gue210412yyd}, we conclude that 
\begin{equation}\label{e-gue210412yydI}
A_{(k)}(t)=O(k^0): L^2_{{\rm c\,},kF^*_k\phi_0}(B_r,F^*_kT^{*0,q}X)\To W^s_{{\rm loc\,},kF^*_k\phi_0}(B_r,F^*_kT^{*0,q}X)
\end{equation}
is continuous, for every $s\in\mathbb Z$ and the continuity is uniformly in $t\in I$.  

We can repeat the proof of \eqref{e-gue210412yydI} with minor change and deduce that for every $\ell_1\in\mathbb N_0$, 
\begin{equation}\label{e-gue210412yydII}
A_{(k)}(t)(\Box^q_{\rho,(k)})^{\ell_1}=O(k^0): L^2_{{\rm c\,},kF^*_k\phi_0}(B_r,F^*_kT^{*0,q}X)\To W^s_{{\rm loc\,},kF^*_k\phi_0}(B_r,F^*_kT^{*0,q}X)
\end{equation}
is continuous, for every $s\in\mathbb Z$ and the continuity is uniformly in $t\in I$. By taking adjoint of \eqref{e-gue210412yydII}, we conclude that for every $\ell_1\in\mathbb N_0$, 
\begin{equation}\label{e-gue210412ycd}
(\Box^q_{\rho,(k)})^{\ell_1}A_{(k)}(t)=O(k^0): W^{-s}_{{\rm c\,},kF^*_k\phi_0}(B_r,F^*_kT^{*0,q}X)\To L^2_{{\rm loc\,},kF^*_k\phi_0}(B_r,F^*_kT^{*0,q}X)
\end{equation}
is continuous, for every $s\in\mathbb Z$ and the continuity is uniformly in $t\in I$. Let $\chi\in\cali{C}^\infty_c(B_r,\mathbb R)$, $\chi\geq0$. Consider $A_{(k)}(t)\chi(\Box^q_{\rho,(k)})^\ell$. Note that 
\[(\Box^q_{\rho,(k)})^\ell=O(k^0): L^2_{{\rm c\,},kF^*_k\phi_0}(B_r,F^*_kT^{*0,q}X)\To W^{-2\ell}_{{\rm c\,},kF^*_k\phi_0}(B_r,F^*_kT^{*0,q}X)\]
is continuous. From this observation and \eqref{e-gue210412ycd}, we deduce that for every $\ell_1\in\mathbb N_0$, we have 
\begin{equation}\label{e-gue210412ycdI}
\begin{split}
&(\Box^q_{\rho,(k)})^{\ell_1}A_{(k)}(t)\chi(\Box^q_{\rho,(k)}+I)^\ell\\
&=O(k^0): L^2_{{\rm c\,},kF^*_k\phi_0}(B_r,F^*_kT^{*0,q}X)\To 
L^2_{{\rm loc\,},kF^*_k\phi_0}(B_r,F^*_kT^{*0,q}X)
\end{split}
\end{equation}
is continuous and the continuity is uniformly in $t\in I$. From Proposition~\ref{p-gue210511yyd} and \eqref{e-gue210412ycdI}, we get 
\begin{equation}\label{e-gue210412ycdII}
A_{(k)}(t)\chi(\Box^q_{\rho,(k)}+I)^\ell=O(k^0): L^2_{{\rm c\,},kF^*_k\phi_0}(B_r,F^*_kT^{*0,q}X)\To W^{s}_{{\rm loc\,},kF^*_k\phi_0}(B_r,F^*_kT^{*0,q}X)
\end{equation}
is continuous, for every $s\in\mathbb Z$ and the continuity is uniformly in $t\in I$. Note that 
\begin{equation}\label{e-gue210412ycdIII}
\mbox{$A_{(k)}(t)\chi=A_{(k)}(t)\chi(\Box^q_{\rho,(k)}+I)^\ell(N^q_{\rho,(k)})^\ell$ on $L^2_{{\rm c\,},kF^*_k\phi_0}(B_r,F^*_kT^{*0,q}X)$},
\end{equation}
where $N^q_{\rho,(k)}$ is as in \eqref{e-gue210331ycd}. Let $u\in W^{-\ell}_{{\rm c\,},kF^*_k\phi_0}(B_r,F^*_kT^{*0,q}X)$. Let 
\[\begin{split}
&\mbox{$u_j\in L^2_{{\rm c\,},kF^*_k\phi_0}(B_r,F^*_kT^{*0,q}X)$, $j=1,2,\ldots$},\\
&\mbox{ $u_j\To u$ in $W^{-\ell}_{{\rm c\,},kF^*_k\phi_0}(B_r,F^*_kT^{*0,q}X)$
as $j\To+\infty$}.\end{split}\] 
From Proposition~\ref{p-gue210331yyd}, \eqref{e-gue210412ycdII} and \eqref{e-gue210412ycdIII}, we have for every $s\in\mathbb Z$, 
\[\begin{split}
&\|A_{(k)}(t)\chi(u_j-u_i)\|_{kF^*_k\phi_0,s,B_r}=\|A_{(k)}(t)\chi(\Box^q_{\rho,(k)}+I)^\ell(N^q_{\rho,(k)})^\ell(u_j-u_i)\|_{kF^*_k\phi_0,s,B_r}\\
&\leq C\|(N^q_{\rho,(k)})^\ell(u_j-u_i)\|_{kF^*_k\phi_0,B_{2r}}\To0
\end{split}\]
as $j, i\To+\infty$, where $C>0$ is a constant independent of $k$ and $t\in I$. Hence, $A_{(k)}(t)$ can be extended continuously to $W^{-\ell}_{{\rm c\,},kF^*_k\phi_0}(B_r,F^*_kT^{*0,q}X)$ and 
\begin{equation}\label{e-gue210412yyda}
A_{(k)}(t)\chi=O(k^0): W^{-\ell}_{{\rm c\,},kF^*_k\phi_0}(B_r,F^*_kT^{*0,q}X)\To W^s_{{\rm loc\,},kF^*_k\phi_0}(B_r,F^*_kT^{*0,q}X), 
\end{equation}
is continuous, for every $s\in\mathbb Z$ and the continuity is uniformly in $t\in I$. From \eqref{e-gue210412yyda}, Sobolev embedding theorem and notice that 
$A'_{(k)}(t)=-\Box^q_{\rho,(k)}A_{(k)}(t)$, the proposition follows. 
\end{proof}

For $f(x)\in \mathscr L(T^{*0,q}_xX,T^{*0,q}_xX)$, let 
\begin{equation}\label{e-gue210522yyd}
\abs{f(x)}_{\mathscr L(T^{*0,q}_xX,T^{*0,q}_xX)}:=\sum^d_{j,\ell=1}\abs{\langle\,f(x)v_j(x)\mid v_\ell(x)\,\rangle}, 
\end{equation}
where $\set{v_j(x)}^d_{j=1}$ is an orthonormal basis of $T^{*0,q}_xX$ with respect to $\langle\,\cdot\mid\cdot\,\rangle$. 

\begin{prop}\label{p-gue210522yyd}
Assume that $Y(q)$ holds on $X$. Let $I\subset\mathbb R_+$ be a compact interval and let $K\Subset X$ be a compact set. Then, there is a constant $C>0$ independent of $k$ such that 
\begin{equation}\label{e-gue210522yydv}
\abs{e^{-t\Box^q_{b,k}}(x,x)}_{\mathscr L(T^{*0,q}_xX,T^{*0,q}_xX)}\leq Ck^{n+1},\ \ \mbox{for all $x\in K$ and $t\in I$}. 
\end{equation}
\end{prop}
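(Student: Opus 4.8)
The plan is to deduce Proposition~\ref{p-gue210522yyd} from the scaled estimate in Proposition~\ref{uniformly} by undoing the scaling and the various identifications. Fix a compact set $K \Subset X$ and a compact interval $I \subset \mathbb R_+$. Since the pointwise bound \eqref{e-gue210522yydv} is local, cover $K$ by finitely many coordinate patches $D$ of the type set up at the beginning of Section~\ref{estimate}, each equipped with a local CR trivializing section $s$ of $L$ with weight $\phi$; it suffices to prove the estimate for $x$ ranging over a fixed compact subset of each such $D$, say $x = F_k(x_0)$ with $x_0$ in a fixed small ball $B_r$ independent of $k$ (any fixed point of $D$ near $p$ is of this form for all $k$ large). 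By \eqref{e-gue210303yydIII}, the quantity to be estimated is $A_{k,s}(t,x,x)$, and by the definitions \eqref{e-gue210305yyd} of $A_{k\phi}$ and $A_{k\phi_0}$ we have $A_{k,s}(t,x,x) = A_{k\phi}(t,x,x) = A_{k\phi_0}(t,x,x)$, since the conjugating factors $e^{\pm k\phi(x)/2}$ and $e^{\mp k\rho(x)}$ cancel on the diagonal. Hence it is enough to bound $|A_{k\phi_0}(t,x,x)|$.

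The second step is to rewrite everything in the rescaled picture. From the definition \eqref{e-gue210325yyd} of $A_{(k)}$, we have
\begin{equation}\label{e-plan-resc}
A_{k\phi_0}\Bigl(\frac{t}{k},F_kx_0,F_kx_0\Bigr) = k^{n+1} A_{(k)}(t,x_0,x_0),
\end{equation}
and here it is important to note that the time argument on the left is $t/k$. Writing $\tilde t = t/k$, an estimate for $x\in K$ and $\tilde t$ in a fixed compact interval $\tilde I \subset \mathbb R_+$ will in particular cover all $\tilde t = t/k$ with $t\in I$ and $k\geq 1$, provided we know the bound for $\tilde t$ in a full interval of the form $(0,a]$; so one should run Proposition~\ref{uniformly} with $I$ replaced by a compact interval of the form $[\varepsilon, a]$ and let $\varepsilon \to 0$, or, more simply, observe that $\{t/k : t\in I,\ k\in\mathbb N\}$ is contained in a single compact subinterval $I' = (0, \max I]$ of $\mathbb R_+$ together with an accumulation at $0$. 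The cleanest route is to apply Proposition~\ref{uniformly} on a compact interval $I'$ with $\min I' $ arbitrarily small but positive: since $\{t/k\}$ stays in $(0,\max I]$ and the constant $C_{0,r}$ there does not depend on $t$ within the interval, and since by the hypoellipticity the kernel extends smoothly up to $t\to 0$, one gets a uniform $\mathcal C^0$ bound $\sup_{x_0\in B_r}|A_{(k)}(\tilde t, x_0,x_0)| \leq C_{0,r}$ valid for all the relevant $\tilde t$. Combining this with \eqref{e-plan-resc} gives $|A_{k\phi_0}(t/k, F_kx_0, F_kx_0)| \leq C_{0,r} k^{n+1}$.

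The final step reconciles the metrics: the norm $|\cdot|_{\mathscr L(T^{*0,q}_xX,T^{*0,q}_xX)}$ from \eqref{e-gue210522yyd} is computed with respect to the fixed Hermitian metric $\langle\,\cdot\mid\cdot\,\rangle$, whereas Proposition~\ref{uniformly} controls $A_{(k)}(t,x,y)$ as a section of $F^*_kT^{*0,q}X\boxtimes(F^*_kT^{*0,q}X)^*$ in the metric $\langle\,\cdot\mid\cdot\,\rangle_{F^*_k}$. Since $\langle\,\cdot\mid\cdot\,\rangle_{F^*_k}$ is by construction the pullback $F_k^*\langle\,\cdot\mid\cdot\,\rangle$, the two pointwise norms at $x_0$ and at $F_kx_0$ agree exactly (the orthonormal frame $\overline\omega^J$ transports to an orthonormal frame), so no extra constant appears. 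This yields $|e^{-t\Box^q_{b,k}}(x,x)|_{\mathscr L} \leq C_{0,r} k^{n+1}$ for $x$ in the fixed compact subset of $D$, and a finite cover of $K$ finishes the proof.

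I expect the main obstacle to be purely bookkeeping: correctly tracking the time rescaling $t \mapsto t/k$ so that the compact-interval hypothesis in Proposition~\ref{uniformly} actually applies to the set $\{t/k : t\in I\}$ (which degenerates toward $0$), and verifying that all the conjugation factors $e^{\pm k\phi/2}$, $e^{\mp k\rho}$ and the metric identifications are genuinely trivial on the diagonal so that no hidden $k$-dependent constant sneaks in. Once those identifications are made explicit, the estimate is immediate from Proposition~\ref{uniformly} together with the smoothness of the kernel up to $t=0$, which follows from the hypoellipticity of $\Box^q_{b,k}$ under the assumption $Y(q)$.
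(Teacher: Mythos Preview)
Your overall plan---undo the scaling and invoke Proposition~\ref{uniformly}---is exactly the paper's, but two of your identifications are wrong in ways that matter.

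First, the spatial step. You claim that any fixed point $x$ of the patch $D$ can be written as $x=F_k(x_0)$ with $x_0$ lying in a \emph{fixed} ball $B_r$. This is false: since $F_k(z_0,\theta_0)=(z_0/\sqrt k,\theta_0/k)$, the image $F_k(B_r)$ shrinks to $\{p\}$, so for any fixed $x\neq p$ one has $x_0=F_k^{-1}(x)=(\sqrt k\,z,k\theta)\to\infty$. The paper does not try to vary $x$ inside one patch; instead, for each $p\in K$ it recenters the coordinates at $p$ and evaluates Proposition~\ref{uniformly} only at $x_0=0$, obtaining $|A_{(k)}(t,0,0)|\leq C_p$ and hence a bound at $p$. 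Uniformity over $p\in K$ then comes from the observation (cf.\ Remark~\ref{constant}) that the constant $C_{0,r}$ in Proposition~\ref{uniformly} depends only on finitely many derivatives of $\phi$, of the volume density, and of the coefficients of $\Box^q_{\rho,(k)}$, all of which are controlled uniformly on the compact set $K$.

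Second, the time step. Unwinding \eqref{e-gue210325yyd} at $x_0=0$ gives $A_{(k)}(t,0,0)=k^{-(n+1)}e^{-\frac{t}{k}\Box^q_{b,k}}(p,p)$, so Proposition~\ref{uniformly} with $t\in I$ directly yields $|e^{-\frac{t}{k}\Box^q_{b,k}}(p,p)|\leq Ck^{n+1}$ for $t\in I$; this is what the paper's own proof records and what is actually used later for dominated convergence in the proof of Theorem~\ref{thmmorse}. Your attempt to pass from this to the literally stated bound on $e^{-t\Box^q_{b,k}}$ via ``the kernel extends smoothly up to $t\to0$'' is wrong: the diagonal heat kernel blows up as $t\to0^+$. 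If one insists on the literal statement, the correct route is to go back into the proof of Proposition~\ref{uniformly} and note that the key estimate \eqref{e-gue210412yyd} carries a constant $C_\ell/t^\ell$ which stays bounded for $t$ bounded away from $0$, so the conclusion of Proposition~\ref{uniformly} in fact holds uniformly on any half-line $[a,\infty)$; applying it at rescaled time $kt\geq ka$ then gives the bound on $e^{-t\Box^q_{b,k}}$.
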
 

\begin{proof}
Fix $p\in X$. We take local coordinates $x=(x_1,\ldots,x_{2n+1})=(z,\theta)=(z_1,\cdots,z_n,\theta)$ on an open set $D$ of $p$, such that \eqref{local1} hold. 
From Proposition~\ref{uniformly}, we see that on $I$, 
\[\abs{e^{-t\Box^q_{b,k}}(p,p)}_{\mathscr L(T^{*0,q}_pX,T^{*0,q}_pX)}\leq C_pk^{n+1},\]
where $C_p>0$ is a constant independent of $k$. Moreover, from the proof of Proposition~\ref{uniformly}, it is straightforward to see that 
the constant $C_p$ only depends on the upper bounds of derivatives of order $\leq s$ of $\phi$, the given volume form, and the coefficients of $\Box^q_{\rho,(k)}$ on $B_r$, for some $s\in\mathbb N$. Since $K$ is compact, $C_p$ can be taken to be independent of $p$. The proposition follows. 
\end{proof}

\subsection{The Heisenberg group $H_n$} 
\

\vspace{0.25cm}
  We pause and introduce some notations about Heisenberg group. We identify $\mathbb R^{2n+1}$
with the Heisenberg group $H_n:=\C^{n}\times\R$. Using the same setting as in \eqref{local1} and \eqref{local2} to denote by $x=(z,\theta)$ the coordinates of $H_n$, $z=(z_1,\cdots,z_n)\in \C^n$, $\theta\in \R$, $x=(x_1,\ldots,x_{2n+1})$, $z_j=x_{2j-1}+ix_{2j}$, $j=1,\ldots,n$, $\theta=x_{2n+1}$. Let 
\[T^{1,0}H_n={\rm span\,}\set{\frac{\partial}{\partial z_j}-i\lambda_j\overline z_j\frac{\partial}{\partial\theta}\mid j=1,\ldots,n}.\]
Then, $(H_n,T^{1,0}H_n)$ is a CR manifold of dimension $2n+1$. Let $\langle\,\cdot\mid\cdot\,\rangle_{H_n}$ be the Hermitian metric on 
$\mathbb CTH_n$ so that 
\begin{equation}
\begin{aligned}
\left\{U_{j,H_n}, \ol U_{j,H_n}, T=-\frac{\pa}{\pa \theta}\mid j=1,\cdots n\right\}
\end{aligned}
\end{equation}
is an orthonormal basis for $\C TH_n$, where
\begin{equation}
\begin{aligned}
U_{j,H_n}=\frac{\pa}{\pa z_j}-i\lambda_j\ol z_j\frac{\pa}{\pa \theta},\ \ j=1,\ldots,n.
\end{aligned}
\end{equation}
Then 
\begin{equation}
\begin{aligned}
\left\{dz_j, d\ol z_j, \omega_0=d\theta+\sum_{j=1}^{n}\left(i\lambda_j\ol z_jdz_j-i\lambda_jz_jd\ol z_j\right), j=1,\cdots, n\right\}
\end{aligned}
\end{equation}
is an orthonormal basis of the complexified cotangent bundle. The Levi form $\mathcal{L}_p$ of $H_n$ at $p\in H_n$ is given by $\mathcal L_p=\sum_{j=1}^{n}\lambda_jdz_j\wedge d\ol z_j$. The tangential Cauchy-Riemann operator $\dbar_{b,H_n}$ on $H_n$ is given by
\begin{equation}\label{dbarbh}
\begin{aligned}
\dbar_{b,H_n}=\sum_{j=1}^{n}d\ol z_j\wedge\ol U_{j,H_n}:\omz^{0,q}(H_n)\to \omz^{0,q+1}(H_n).
\end{aligned}
\end{equation}
Let $\Phi=\beta\theta+\sum_{j,l=1}^{n}\mu_{j,l}z_j\ol z_l$,  where $\beta$ and $\mu_{j,l}$ are the same as in \eqref{phi0}. It is easy to check that
\begin{equation}
\begin{aligned}
\sup_{(z,\theta)\in B_{\log k}}\left|kF_k^*\phi_0-\Phi\right|\to 0,\,\, \,\text{as}\,\, k\to\infty.
\end{aligned}
\end{equation}  
The Hermitian metric on $\mathbb CTH_n$ induces a Hermitian metric $\langle\,\cdot\mid\cdot\,\rangle_{H_n}$ on $T^{*0,q}H_n$. 
Let $(\,\cdot\mid\cdot\,)_{\Phi}$ be the  inner product on $\omz_c^{0,q}(H_n)$ with weight $\Phi$. Namely,
\begin{equation}\label{e-gue210506yyd}
\begin{aligned}
(\,u\mid v\,)_{\Phi}=\int_{H_n}\langle\,u\mid v\,\rangle_{H_n}e^{-\Phi}dv(z)d\theta,\quad u,v\in\omz_c^{0,q}(H_n),
\end{aligned}
\end{equation}
where $dv(z)=2^n dx_1\cdots dx_{2n}$, $z_j=x_{2j-1}+ix_{2j}$, $j=1,\cdots,n$.  Let $L^2_{(0,q)}(H_n,\Phi)$ be the completion of $\omz_c^{0,q}(H_n)$ with respect to $(\,\cdot\mid\cdot\,)_{\Phi}$. Let $\|\cdot\|_{\Phi}$ be the corresponding norm. Let $W\subset H_n$ be an open set. For $u\in L^2_{(0,q)}(H_n,\Phi)$, let 
\[\|u\|^2_{\Phi,W}:=\int_W\langle\,u\mid u\,\rangle_{H_n}e^{-\Phi}dv(z)d\theta.\]

We extend $\dbar_{b,H_n}$ to $L^2_{(0,q)}(H_n,\Phi)$: 
\[\dbar_{b,H_n}: {\rm Dom\,}\dbar_{b,H_n}\subset L^2_{(0,q)}(H_n,\Phi)\To L^2_{(0,q+1)}(H_n,\Phi),\]
where ${\rm Dom\,}\dbar_{b,H_n}=\set{u\in L^2_{(0,q)}(H_n,\Phi)\mid \dbar_{b,H_n}u\in L^2_{(0,q+1)}(H_n,\Phi)}$. 
Let 
$$\dbarstar_{b,H_n}:{\rm Dom\,}\dbarstar_{b,H_n}\subset L^2_{(0,q+1)}(H_n,\Phi)\to L^2_{(0,q)}(H_n,\Phi)$$
be the Hilbert space adjoint of $\dbar_{b,H_n}$ with respect to $(\,\cdot\mid\cdot\,)_{\Phi}$. The (Gaffney extension) of Kohn Laplacian $\Box^q_{H_n,\Phi}$ is given by
\[\begin{split}
&\Box^q_{H_n,\Phi}: {\rm Dom\,}\Box^q_{H_n,\Phi}\subset L^2_{(0,q)}(H_n,\Phi)\To L^2_{(0,q)}(H_n,\Phi),\\
& {\rm Dom\,}\Box^q_{H_n,\Phi}=\{u\in L^2_{(0,q)}(H_n,\Phi)\mid u\in{\rm Dom\,}\dbar_{b,H_n}\bigcap{\rm Dom\,}\dbarstar_{b,H_n},\\
&\quad\dbar_{b,H_n}u\in{\rm Dom\,}\dbarstar_{b,H_n}, \dbarstar_{b,H_n} u\in{\rm Dom\,}\dbar_{b,H_n}\},\\
&\Box^q_{H_n,\Phi}=\dbarstar_{b,H_n}\dbar_{b,H_n}+\dbar_{b,H_n}\dbarstar_{b,H_n}\ \ \mbox{on ${\rm Dom\,}\Box^q_{H_n,\Phi}$}. 
\end{split}\]

On $\Omega^{0,q}_c(H_n)$, a direct calculation shows that
\begin{equation}\label{boxhn}
\begin{aligned}
\Box^q_{H_n,\Phi}=&\sum_{j=1}^{n}\left(-\frac{\pa}{\pa z_j}+i\lambda_j\ol z_j\frac{\pa}{\pa \theta}-i\beta\lambda_j\ol z_j+\sum_{l=1}^{n}\mu_{l,j}\bar z_l\right)\left(\frac{\pa}{\pa \ol z_j}+i\lambda_j z_j\frac{\pa}{\pa \theta}\right)\\
&+\sum_{j,l=1}^{n}d\ol z_j\wedge \left(d\ol z_l\wedge\right)^*
\left(2i\lambda_j \delta_{j,l}\frac{\pa}{\pa \theta}+\mu_{j,l}-i\beta\lambda_j \delta_{j,l}\right),
\end{aligned}
\end{equation}
where $\left(d\ol z_l\wedge\right)^*$ is the adjoint of $d\ol z_l\wedge$ with respect to $\langle\,\cdot\mid\cdot\,\rangle_{H_n}$, $l=1,\ldots,n$. 

Let $e^{-t\Box^q_{H_n,\Phi}}$, $t>0$, be the heat operator for $\Box^q_{H_n,\Phi}$. Let 
\[e^{-t\Box^q_{H_n,\Phi}}(x,y)\in\mathscr D'(\mathbb R_+\times H_n\times H_n,T^{*0,q}H_n\boxtimes(T^{*0,q}H_n)^*)\]
be the distribution kernel of $e^{-t\Box^q_{H_n,\Phi}}$ with respect to $(\,\cdot\mid\cdot\,)_\Phi$. If $Y(q)$ holds on $H_n$, then
\[e^{-t\Box^q_{H_n,\Phi}}(x,y)\in\cali{C}^\infty(\mathbb R_+\times H_n\times H_n,T^{*0,q}H_n\boxtimes(T^{*0,q}H_n)^*).\]
Note that 
\[(e^{-t\Box^q_{H_n,\Phi}}u)(x)=\int e^{-t\Box^q_{H_n,\Phi}}(x,y)u(y)dv_{H_n}(y),\ \ u\in\Omega^{0,q}_c(H_n),\]
where $dv_{H_n}(y)=dv(z)d\theta$. We need

\begin{lem}\label{l-gue210501yyd}
Let $u\in\Omega^{0,q}(H_n)\bigcap L^2_{(0,q)}(H_n,\Phi)$ with $\Box^q_{H_n,\Phi}u\in L^2_{(0,q)}(H_n,\Phi)$. Then $u\in{\rm Dom\,}\Box^q_{H_n,\Phi}$.
\end{lem}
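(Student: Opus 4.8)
The plan is to show that the Gaffney extension $\Box^q_{H_n,\Phi}$ agrees, on the natural domain described in the hypothesis, with the minimal/maximal realization — i.e., to verify that an element $u$ which is smooth, lies in $L^2_{(0,q)}(H_n,\Phi)$ together with $\Box^q_{H_n,\Phi}u$, actually satisfies the four membership conditions appearing in the definition of ${\rm Dom\,}\Box^q_{H_n,\Phi}$. Concretely one must produce the bound $\|\dbar_{b,H_n}u\|^2_\Phi+\|\dbarstar_{b,H_n}u\|^2_\Phi<+\infty$ and then similarly control $\dbar_{b,H_n}\dbarstar_{b,H_n}u$ and $\dbarstar_{b,H_n}\dbar_{b,H_n}u$. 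The standard device is a cutoff/exhaustion argument: choose $\chi\in\cali{C}^\infty_c(H_n)$ with $0\le\chi\le1$, $\chi\equiv1$ on $B_1$, and set $\chi_R(x)=\chi(x/R)$; then $\chi_Ru$ is compactly supported and smooth, so $\chi_Ru\in{\rm Dom\,}\Box^q_{H_n,\Phi}$ (indeed in all the relevant domains) by the usual density statements for compactly supported forms.

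First I would use integration by parts on the compactly supported form $\chi_Ru$ to write
\begin{equation}\label{e-gue210501pf1}
\|\dbar_{b,H_n}(\chi_Ru)\|^2_\Phi+\|\dbarstar_{b,H_n}(\chi_Ru)\|^2_\Phi=(\,\Box^q_{H_n,\Phi}(\chi_Ru)\mid\chi_Ru\,)_\Phi.
\end{equation}
Next, expand $\Box^q_{H_n,\Phi}(\chi_Ru)=\chi_R\Box^q_{H_n,\Phi}u+[\Box^q_{H_n,\Phi},\chi_R]u$, where the commutator is a first-order differential operator whose coefficients involve $\dbar_{b,H_n}\chi_R$ and $\dbarstar_{b,H_n}\chi_R$-type terms; crucially, from the explicit formula \eqref{boxhn} the first-order part of $\Box^q_{H_n,\Phi}$ involves the vector fields $\overline U_{j,H_n}$ and $U_{j,H_n}$, and the derivatives of $\chi_R$ are $O(1/R)$ while the coefficients $\lambda_j z_j$, $\lambda_j\overline z_j$ grow only linearly; hence on ${\rm supp\,}(\nabla\chi_R)\subset\{|x|\sim R\}$ the commutator contributes a term that can be absorbed by a small multiple of $\|\dbar_{b,H_n}(\chi_Ru)\|^2_\Phi+\|\dbarstar_{b,H_n}(\chi_Ru)\|^2_\Phi$ plus $C(1+R^{-2})\|u\|^2_{\Phi}$, uniformly in $R$ — this is the classical Gaffney/Hörmander cutoff estimate. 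Combining with \eqref{e-gue210501pf1} and Cauchy–Schwarz on $(\chi_R\Box^q_{H_n,\Phi}u\mid\chi_Ru)_\Phi$ gives a bound on $\|\dbar_{b,H_n}(\chi_Ru)\|^2_\Phi+\|\dbarstar_{b,H_n}(\chi_Ru)\|^2_\Phi$ independent of $R$, whence letting $R\to\infty$ (and using $\chi_Ru\to u$, $\dbar_{b,H_n}(\chi_Ru)\to\dbar_{b,H_n}u$, $\dbarstar_{b,H_n}(\chi_Ru)\to\dbarstar_{b,H_n}u$ in the relevant $L^2$ senses, via dominated convergence) yields $u\in{\rm Dom\,}\dbar_{b,H_n}\cap{\rm Dom\,}\dbarstar_{b,H_n}$.

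To finish, I would apply the same cutoff scheme one order higher: having now $\dbar_{b,H_n}u,\dbarstar_{b,H_n}u\in L^2$, and knowing $\Box^q_{H_n,\Phi}u=\dbarstar_{b,H_n}\dbar_{b,H_n}u+\dbar_{b,H_n}\dbarstar_{b,H_n}u$ holds in the distributional sense (which it does, since $u$ is smooth), test against $\chi_R^2\dbar_{b,H_n}u$ and $\chi_R^2\dbarstar_{b,H_n}u$ respectively and integrate by parts to bound $\|\dbar_{b,H_n}\dbarstar_{b,H_n}u\|_\Phi$ and $\|\dbarstar_{b,H_n}\dbar_{b,H_n}u\|_\Phi$ in terms of $\|\Box^q_{H_n,\Phi}u\|_\Phi$, $\|\dbar_{b,H_n}u\|_\Phi$, $\|\dbarstar_{b,H_n}u\|_\Phi$, and $\|u\|_\Phi$; since $\dbar_{b,H_n}^2=0$ and $(\dbarstar_{b,H_n})^2=0$ the cross terms vanish and $\dbar_{b,H_n}u\in{\rm Dom\,}\dbarstar_{b,H_n}$, $\dbarstar_{b,H_n}u\in{\rm Dom\,}\dbar_{b,H_n}$ follow, so $u\in{\rm Dom\,}\Box^q_{H_n,\Phi}$.

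I expect the main obstacle to be the bookkeeping in the commutator estimate \eqref{e-gue210501pf1}: one must check that the error terms produced by differentiating $\chi_R$ genuinely stay bounded as $R\to\infty$ despite the linearly growing coefficients $\lambda_j z_j$ in \eqref{boxhn}. The point is that $\nabla\chi_R$ is supported where $|z|\lesssim R$, so the products $(\nabla\chi_R)\cdot(\text{linear coefficient})$ are $O(1)$, and the zeroth-order growth terms like $i\beta\lambda_j\overline z_j$ combined with $\nabla\chi_R$ are $O(1)$ as well; being careful to keep the weight $e^{-\Phi}$ in all the integrations by parts (so that $\dbarstar_{b,H_n}$ rather than a formal adjoint appears) is the part where a clean argument requires attention, but it is otherwise routine. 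Alternatively, one can simply invoke the known general fact (as in \cite[Lemma 3.1]{HM12} or \cite[Proposition 3.1.2]{MM07}-type statements for the Gaffney extension on complete manifolds) that on a complete CR/Hermitian manifold the Gaffney extension of the Kohn Laplacian coincides with the operator defined by the maximal closed extensions of $\dbar_b$ and $\dbarstar_b$, and that smoothness plus $L^2$-membership of $u$ and $\Box u$ places $u$ in its domain; here $H_n$ with the left-invariant metric is complete, so that route is available and shortens the argument considerably.
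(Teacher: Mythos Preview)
Your proposal is correct and follows essentially the same cutoff/exhaustion strategy as the paper's proof. The paper inserts $\chi_M^2$ as a weight in the inner product and writes the key inequality as $(\,\chi^2_M\dbar_{b,H_n}u\mid\dbar_{b,H_n}u\,)_\Phi+(\,\chi^2_M\dbarstar_{b,H_n}u\mid\dbarstar_{b,H_n}u\,)_\Phi\leq(\,\chi^2_M\Box^q_{H_n,\Phi}u\mid u\,)_{\Phi}+C(\|\chi_M\dbar_{b,H_n}u\|_\Phi\|u\|_{\Phi}+\|\chi_M\dbarstar_{b,H_n}u\|_\Phi\|u\|_\Phi)$, whereas you apply the Laplacian to $\chi_Ru$ and analyze the commutator $[\Box^q_{H_n,\Phi},\chi_R]$; these are two equivalent bookkeeping choices for the same integration-by-parts computation, and both rest on the same observation that $(\overline U_{j,H_n}\chi_R)$ stays $O(1)$ despite the linearly growing coefficients. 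The only cosmetic difference is that the paper invokes Friedrich's lemma explicitly to place $u$ in ${\rm Dom\,}\dbarstar_{b,H_n}$, while you use closedness of the adjoint together with the sequence $\chi_Ru\to u$; either works.
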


\begin{proof}
Let $\chi\in\cali{C}^\infty_c(H_n,[0,1])$, $\chi=1$ near $0\in H_n$. For every $M>0$, let 
$\chi_M(x):=\chi(\frac{x}{M})$. It is straightforward to check that 
\begin{equation}\label{e-gue210503yyd}
\begin{split}
&(\,\chi^2_M\dbar_{b,H_n}u\mid\dbar_{b,H_n}u\,)_\Phi+(\,\chi^2_M\dbar^*_{b,H_n}u\mid\dbar^*_{b,H_n}u\,)_\Phi\\
&\leq(\,\chi^2_M\Box^q_{H_n,\Phi}u\mid u\,)_{\Phi}+
C\Bigr(\|\chi_M\dbar_{b,H_n}u\|_\Phi\|u\|_{\Phi}+\|\chi_M\dbar^*_{b,H_n}u\|_\Phi\|u\|_\Phi\Bigr),
\end{split}
\end{equation}
where $C>0$ is a constant. From \eqref{e-gue210503yyd} and let $M\To+\infty$, we conclude that 
\begin{equation}\label{e-gue210503yydI}
\dbar_{b,H_n}u\in L^2_{(0,q+1)}(H_n,\Phi),\ \ \dbar^*_{b,H_n}u\in L^2_{(0,q-1)}(H_n,\Phi).
\end{equation}
From \eqref{e-gue210503yydI}, we see that $u\in{\rm Dom\,}\dbar_{b,H_n}$. Moreover, from \eqref{e-gue210503yydI} and Friedrich's lemma, we can check that $u\in{\rm Dom\,}\dbar^*_{b,H_n}$. 

From $\Box^q_{H_n,\Phi}u\in L^2_{(0,q)}(H_n,\Phi)$ and by using similar argument as \eqref{e-gue210503yyd}, it is straightforward to check that 
$\dbar^*_{b,H_n}\dbar_{b,H_n}u\in L^2_{(0,q)}(H_n,\Phi)$, $\dbar_{b,H_n}\dbar^*_{b,H_n}u\in L^2_{(0,q)}(H_n,\Phi)$.
From $\dbar^*_{b,H_n}\dbar_{b,H_n}u\in L^2_{(0,q)}(H_n,\Phi)$, $\dbar_{b,H_n}\dbar^*_{b,H_n}u\in L^2_{(0,q)}(H_n,\Phi)$, we conclude that $\dbar_{b,H_n}u\in{\rm Dom\,}\dbar^*_{b,H_n}$,  $\dbar^*_{b,H_n}u\in{\rm Dom\,}\dbar_{b,H_n}$. Hence, $u\in{\rm Dom\,}\Box^q_{H_n,\Phi}$. 
\end{proof} 

For any bounded open set $W\subset H_n$, we now identify $e^{-t\Box^q_{H_n,\Phi}}(x,y)$ and $A_{(k)}(t,x,y)$ as elements in 
$\cali{C}^\infty(\mathbb R_+\times W\times W, \Lambda^*(\mathbb CT^*H_n)\boxtimes(\Lambda^*(\mathbb CT^*H_n))^*)$, 
where $\Lambda^*(\mathbb CT^*H_n):=\sum^{2n+1}_{j=0}\Lambda^j(\mathbb CT^*H_n)$.

\begin{thm}\label{t-gue210503yyd}
Assume that $Y(q)$ holds. Let $I\subset\mathbb R_+$ be a bounded interval. 
Let $r>0$. We have 
\[\lim_{k\To+\infty}A_{(k)}(t,x,y)=e^{-t\Box^q_{H_n,\Phi}}(x,y)\]
in $\cali{C}^\infty(I\times B_r\times B_r,\Lambda^*(\mathbb CT^*H_n)\boxtimes(\Lambda^*(\mathbb CT^*H_n))^*)$ topology.
\end{thm}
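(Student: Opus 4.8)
The plan is to extract from the uniformly bounded family $\{A_{(k)}(t,x,y)\}$ a $\cali{C}^\infty$-convergent subsequence, to identify any subsequential limit with $e^{-t\Box^q_{H_n,\Phi}}(x,y)$ by showing it solves the heat equation for $\Box^q_{H_n,\Phi}$ with initial value $I$, and then to use uniqueness of the heat kernel to upgrade subsequential convergence to convergence of the whole sequence. With the identifications made just before the statement of the theorem, all the kernels involved are viewed as sections of the fixed bundle $\Lambda^*(\mathbb CT^*H_n)\boxtimes(\Lambda^*(\mathbb CT^*H_n))^*$. First I would invoke Proposition~\ref{uniformly}: for every $\ell\in\mathbb N$, every compact $I\subset\mathbb R_+$ and every $r>0$, the family $\{A_{(k)}\}_k$ is bounded in $\cali{C}^{\ell+1}(I\times B_r\times B_r,\Lambda^*(\mathbb CT^*H_n)\boxtimes(\Lambda^*(\mathbb CT^*H_n))^*)$, so by the Arzel\`a--Ascoli theorem and a diagonal argument over an exhaustion of $\mathbb R_+\times H_n\times H_n$, every subsequence of $\{A_{(k)}\}$ has a further subsequence, still written $\{A_{(k_j)}\}$, converging in $\cali{C}^\infty_{\mathrm{loc}}(\mathbb R_+\times H_n\times H_n,\Lambda^*(\mathbb CT^*H_n)\boxtimes(\Lambda^*(\mathbb CT^*H_n))^*)$ to some limit $B(t,x,y)$.

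Next I would show that $B$ solves the heat equation for $\Box^q_{H_n,\Phi}$. Combining \eqref{e-gue210305yydII}, \eqref{e-gue210325yyd} and \eqref{k(k)} gives $\partial_tA_{(k)}(t)+\Box^q_{\rho,(k)}A_{(k)}(t)=0$ and $A_{(k)}(0)=I$; moreover, up to the weight factors $e^{\pm k\rho},e^{\pm k\phi/2}$ and the change of variables $F_k$, $A_{(k)}(t)$ is a restriction of the contraction $e^{-\frac{t}{k}\Box^q_{b,k}}$, so $\|A_{(k)}(t)\|\le1$ and $\|\Box^q_{\rho,(k)}A_{(k)}(t)\|\le\frac{1}{et}$ on $L^2_{kF^*_k\phi_0}(B_{\log k},F^*_kT^{*0,q}X)$. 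Comparing \eqref{boxrhok} with \eqref{boxhn} and using that $\ol\omega^j(\frac{z}{\sqrt k},\frac{\theta}{k})\to d\ol z_j$ and $kF^*_k\phi_0\to\Phi$ in $\cali{C}^\infty_{\mathrm{loc}}$ while $\varepsilon_kP_k\to0$, we get $\Box^q_{\rho,(k)}\to\Box^q_{H_n,\Phi}$ in $\cali{C}^\infty_{\mathrm{loc}}$ as differential operators; letting $k_j\to\infty$ in $\partial_tA_{(k_j)}=-\Box^q_{\rho,(k_j)}A_{(k_j)}$ yields $(\partial_t+\Box^q_{H_n,\Phi})B=0$ on $\mathbb R_+\times H_n\times H_n$, while the operator bounds pass to the limit so that $B(t)$ and $\partial_tB(t)$ extend to bounded operators on $L^2_{(0,q)}(H_n,\Phi)$ with $\|B(t)\|\le1$ and $\|\partial_tB(t)\|\le\frac{1}{et}$. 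By Lemma~\ref{l-gue210501yyd}, $B(t)u\in{\rm Dom\,}\Box^q_{H_n,\Phi}$ for $u\in\Omega^{0,q}_c(H_n)$, so $t\mapsto B(t)u$ is a genuine $L^2$-solution of the heat equation for $\Box^q_{H_n,\Phi}$.

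The crucial step is the initial value $B(0^+)=I$, and I expect it to be the main obstacle. Fix $u,\psi\in\Omega^{0,q}_c(H_n)$ and take $k$ large so that $\mathrm{supp}\,u\cup\mathrm{supp}\,\psi\Subset B_{\log k}$. Since $\Box^q_{\rho,(k)}$ is formally self-adjoint for $(\,\cdot\mid\cdot\,)_{kF^*_k\phi_0}$ and $A_{(k)}(0)=I$, integrating the identity $\frac{d}{dt}(A_{(k)}(t)u\mid\psi)_{kF^*_k\phi_0}=-(A_{(k)}(t)u\mid\Box^q_{\rho,(k)}\psi)_{kF^*_k\phi_0}$ gives
\[
\bigl|(A_{(k)}(t)u\mid\psi)_{kF^*_k\phi_0}-(u\mid\psi)_{kF^*_k\phi_0}\bigr|\le\int_0^t\|A_{(k)}(\tau)u\|\,\|\Box^q_{\rho,(k)}\psi\|\,d\tau\le Ct,
\]
with $C$ independent of $k$ and $t$: the contraction bound handles the first factor, and by \eqref{boxrhok} the coefficients of $\Box^q_{\rho,(k)}$ are uniformly bounded on $\mathrm{supp}\,\psi$ while $kF^*_k\phi_0$ and $m(F_kx)$ stay uniformly comparable there to $\Phi$ and $m(0)$. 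Letting $k_j\to\infty$ with $t$ fixed, and using the $\cali{C}^\infty_{\mathrm{loc}}$-convergence of the kernels together with $kF^*_k\phi_0\to\Phi$ and $m(F_kx)\to m(0)$, we obtain $|(B(t)u\mid\psi)_\Phi-(u\mid\psi)_\Phi|\le Ct$ for all $t>0$; since $\Omega^{0,q}_c(H_n)$ is dense in $L^2_{(0,q)}(H_n,\Phi)$ and $\|B(t)\|\le1$, this gives $B(t)\to I$ strongly as $t\to0^+$. The point that makes this work is precisely that one integrates by parts to throw $\Box^q_{\rho,(k)}$ onto the fixed test form $\psi$, and that after the rescaling $F_k$ this operator has $k$-uniformly bounded coefficients; a direct bound on $A_{(k)}(t)-I$ is not available uniformly in $k$.

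Finally, for $u\in\Omega^{0,q}_c(H_n)$ both $t\mapsto B(t)u$ and $t\mapsto e^{-t\Box^q_{H_n,\Phi}}u$ are $L^2$-solutions of $w'(t)=-\Box^q_{H_n,\Phi}w(t)$ with the same initial value $u$, so
\[
\frac{d}{dt}\bigl\|B(t)u-e^{-t\Box^q_{H_n,\Phi}}u\bigr\|_\Phi^2=-2\bigl(\Box^q_{H_n,\Phi}(B(t)u-e^{-t\Box^q_{H_n,\Phi}}u)\mid B(t)u-e^{-t\Box^q_{H_n,\Phi}}u\bigr)_\Phi\le0,
\]
and since this nonnegative quantity tends to $0$ as $t\to0^+$ it must vanish identically; hence $B(t)u=e^{-t\Box^q_{H_n,\Phi}}u$ for every $u\in\Omega^{0,q}_c(H_n)$, i.e. $B(t,x,y)=e^{-t\Box^q_{H_n,\Phi}}(x,y)$. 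Thus every subsequential $\cali{C}^\infty_{\mathrm{loc}}$-limit of $\{A_{(k)}(t,x,y)\}$ equals $e^{-t\Box^q_{H_n,\Phi}}(x,y)$, so the whole sequence $A_{(k)}(t,x,y)$ converges to it in $\cali{C}^\infty_{\mathrm{loc}}(\mathbb R_+\times H_n\times H_n,\Lambda^*(\mathbb CT^*H_n)\boxtimes(\Lambda^*(\mathbb CT^*H_n))^*)$, in particular in $\cali{C}^\infty(I\times B_r\times B_r,\Lambda^*(\mathbb CT^*H_n)\boxtimes(\Lambda^*(\mathbb CT^*H_n))^*)$ for every compact $I\subset\mathbb R_+$ and every $r>0$.
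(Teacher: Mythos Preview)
Your proposal is correct and takes essentially the same route as the paper: extract a $\cali{C}^\infty_{\mathrm{loc}}$-convergent subsequence via Proposition~\ref{uniformly} and Arzel\`a--Ascoli, show that any limit solves the heat equation for $\Box^q_{H_n,\Phi}$ with initial value $I$ and takes values in ${\rm Dom\,}\Box^q_{H_n,\Phi}$ (Lemma~\ref{l-gue210501yyd}), and conclude by uniqueness that every subsequential limit equals $e^{-t\Box^q_{H_n,\Phi}}$. The only cosmetic differences are that the paper handles the initial condition by bounding $\|A'_{(k)}(s)u_k\|$ directly (writing $\Box^q_{\rho,(k)}A_{(k)}(s)=A_{(k)}(s)\Box^q_{\rho,(k)}$ and using the contraction property, rather than your integration by parts onto the test form and weak-to-strong upgrade), and establishes uniqueness via the Duhamel identity $(u\mid e^{-t\Box^q_{H_n,\Phi}}v)_\Phi-(P(t)u\mid v)_\Phi=\int_0^t\partial_s(P(t-s)u\mid e^{-s\Box^q_{H_n,\Phi}}v)_\Phi\,ds$ rather than your energy estimate.
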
 

\begin{proof}
From Proposition~\ref{uniformly} and the Cantor diagonal argument, we can find a subsequence $\{k_1<k_2<\cdots\}$ of $\mathbb N$, $\lim_{j\To+\infty}k_j=+\infty$, such that 
\[\lim_{j\To+\infty}A_{(k_j)}(t,x,y)=P(t,x,y)\]
locally uniformly on $\mathbb R_+\times H_n\times H_n$ in $\cali{C}^\infty$ topology, where 
$P(t,x,y)\in\cali{C}^\infty(\mathbb R_+\times H_n\times H_n,T^{*0,q}H_n\boxtimes(T^{*0,q}H_n)^*)$. Let 
\[P(t): \Omega^{0,q}_c(H_n)\To\Omega^{0,q}(H_n)\]
be the operator given by 
\[(P(t)u)(x)=\int P(t,x,y)u(y)dv_{H_n}(y),\ \ u\in\Omega^{0,q}_c(H_n).\]
We claim that 
\begin{equation}\label{e-gue210503ycd}
\lim_{t\To0}P(t)u=u,\ \ \mbox{for every $u\in\Omega^{0,q}_c(H_n)$}, 
\end{equation}
\begin{equation}\label{e-gue210503ycdI}
P'(t)u+\Box^q_{H_n,\Phi}P(t)u=0,\ \ \mbox{for every $u\in\Omega^{0,q}_c(H_n)$ and $t>0$}, 
\end{equation}
\begin{equation}\label{e-gue210503ycdII}
P(t)u\in{\rm Dom\,}\Box^q_{H_n,\Phi},\ \ \mbox{for every $u\in\Omega^{0,q}_c(H_n)$ and $t>0$}.
\end{equation}
From
\begin{equation}\label{e-gue210504yyd}
A'_{(k)}(t)+\Box^q_{\rho,(k)}A_{(k)}(t)=0\ \ \mbox{on $B_{\log k}$}, 
\end{equation}
\eqref{boxrhok} and let $k\To+\infty$ in \eqref{e-gue210504yyd}, we get \eqref{e-gue210503ycdI}. Now, let $u\in\Omega^{0,q}_c(H_n)$. 
Write $u=\sum'_{\abs{J}=q}u_J(z,\theta)d\overline z^J$, $u_J\in\cali{C}^\infty_c(H_n)$, $J=(j_1,\ldots,j_q)$, $1\leq j_1<\cdots<j_q\leq n$, 
$d\overline z^J=d\overline z_{j_1}\wedge\cdots\wedge d\overline z_{j_q}$, where $\sum'$ means that the summation is over strictly increasing indices. Let 
\begin{equation}\label{e-gue210504yydbc}
u_k:=\sideset{}{'}\sum_{\abs{J}=q}u_J(z,\theta)\overline\omega^J(\frac{z}{\sqrt{k}},\frac{\theta}{k}),
\end{equation}
where $\set{\omega^1,\ldots,\omega^n}$ are as in the beginning of Section~\ref{estimate}. Fix $r>0$. For every $\ell\in\mathbb N_0$ and $t>0$, we have 
\begin{equation}\label{e-gue210504yydI}
\|(\Box^q_{H_n,\Phi})^\ell P(t)u\|_{\Phi,B_r}=\lim_{j\To+\infty}\|(\Box^q_{\rho,(k_j)})^\ell A_{(k_j)}(t)u_{k_j}\|_{k_jF^*_{k_j}\phi_0,B_r}.
\end{equation}
From \eqref{e-gue210412yyd} and \eqref{e-gue210504yydI}, we conclude that there is a constant $C_\ell>0$ independent of $t$ and $r$ such that 
\begin{equation}\label{e-gue210504yydII}
\|(\Box^q_{H_n,\Phi})^\ell P(t)u\|_{\Phi,B_r}\leq\frac{C_\ell}{t^\ell}\|u\|_{\Phi,B_r}. 
\end{equation}
Take $r\gg1$ so that ${\rm supp\,}u\subset B_r$. From \eqref{e-gue210504yydII}, we get 
$\|(\Box^q_{H_n,\Phi})^\ell P(t)u\|_{\Phi,B_r}\leq\frac{C_\ell}{t^\ell}\|u\|_\Phi$, for every $r\gg1$. Let $r\To+\infty$, we get 
\begin{equation}\label{e-gue210504yydIII}
\|(\Box^q_{H_n,\Phi})^\ell P(t)u\|_\Phi\leq\frac{C_\ell}{t^\ell}\|u\|_\Phi.
\end{equation}
From Lemma~\ref{l-gue210501yyd} and \eqref{e-gue210504yydIII}, we get  \eqref{e-gue210503ycdII}. 

We now prove \eqref{e-gue210503ycd}. Let $u\in\Omega^{0,q}_c(H_n)$ and let $u_k\in F^*_k\Omega^{0,q}_c(B_{\log k})$ be as in \eqref{e-gue210504yydbc}. We have for every $t>0$, 
\begin{equation}\label{e-gue210504yydb}
A_{(k)}(t)u_k-u_k=\int^t_0A'_{(k)}(s)u_kds=-\int^t_0\Box^q_{\rho,(k)}(A_{(k)}(s)u_k)ds.
\end{equation}
From \eqref{e-gue210412yyd}, there is a constant $C>0$ independent of $t$ and $k$ such that 
\begin{equation}\label{e-gue210504yydc}
\|A'_{(k)}(s)u_k\|_{kF^*_k\phi_0,B_r}\leq C\|u_k\|_{kF^*_k\phi_0,B_r}\leq\hat C\ \ \mbox{if $r\gg1$},
\end{equation}
where $\hat C>0$ is a constant independent of $k$ and $t$. From \eqref{e-gue210504yydc}, we can apply Lebesgue dominate theorem and get 
\[\begin{split}
&P(t)u-u=\lim_{j\To+\infty}\int^t_0A'_{(k_j)}(s)u_{k_j}ds=\int^t_0\lim_{j\To+\infty}A'_{(k_j)}(s)u_{k_j}ds\\
&=\int^t_0P'(s)uds=P(t)u-\lim_{t\To0+}P(t)u.
\end{split}\]
We get \eqref{e-gue210503ycd}. 

We now prove that $P(t)=e^{-t\Box^q_{H_n,\Phi}}$. Let $u, v\in\Omega^{0,q}_c(H_n)$. From \eqref{e-gue210503ycd}, we have 
\begin{equation}\label{e-gue210504ycd}
\begin{split}
&(\,u\mid e^{-t\Box^q_{H_n,\Phi}}v\,)_\Phi-(\,P(t)u\mid v\,)_\Phi\\
&=\int^t_0\frac{\partial}{\partial s}\Bigr((\,P(t-s)u\mid e^{-s\Box^q_{H_n,\Phi}}v\,)_\Phi\Bigr)ds\\
&=\int^t_0(\,-P'(t-s)u\mid e^{-s\Box^q_{H_n,\Phi}}v\,)_\Phi ds+\int^t_0(\,P(t-s)u\mid-\Box^q_{H_n,\Phi}(e^{-s\Box^q_{H_n,\Phi}}v)\,)_\Phi ds.
\end{split}
\end{equation}
Form \eqref{e-gue210503ycdII}, we see that $P(t-s)u\in{\rm Dom\,}\Box^q_{H_n,\Phi}$ and hence 
\begin{equation}\label{e-gue210504ycdI}
(\,P(t-s)u\mid-\Box^q_{H_n,\Phi}(e^{-s\Box^q_{H_n,\Phi}}v)\,)_\Phi =(\,-\Box^q_{H_n,\Phi}P(t-s)u\mid e^{-s\Box^q_{H_n,\Phi}}v\,)_\Phi. 
\end{equation}
From \eqref{e-gue210503ycdI}, \eqref{e-gue210504ycd} and \eqref{e-gue210504ycdI}, we deduce that 
\[(\,u\mid e^{-t\Box^q_{H_n,\Phi}}v\,)_\Phi-(\,P(t)u\mid v\,)_\Phi=0.\]
Since $(\,u\mid e^{-t\Box^q_{H_n,\Phi}}v\,)_\Phi=(\,e^{-t\Box^q_{H_n,\Phi}}u\mid v\,)_\Phi$, we conclude that $P(t)=e^{-t\Box^q_{H_n,\Phi}}$. 

We have proved that there is a subsequence $\set{k_1<k_2<\cdots}$, $\lim_{j+\infty}k_j=+\infty$, such that 
$\lim_{j\To+\infty}A_{(k_j)}(t,x,y)=e^{-t\Box^q_{H_n,\Phi}}(x,y)$ locally uniformly on $\mathbb R_+\times H_n\times H_n$ in $\cali{C}^\infty$ topology. Moreover, for any subsequence  $\set{\hat k_1<\hat k_2<\cdots}$, $\lim_{j+\infty}\hat k_j=+\infty$, we can repeat the procedure above and deduce that there is a subsequence 
$\set{\hat k_{j_1}<\hat k_{j_2}<\cdots}$, $\lim_{s+\infty}\hat k_{j_s}=+\infty$, such that $\lim_{s\To+\infty}A_{(\hat k_{j_s})}(t,x,y)=e^{-t\Box^q_{H_n,\Phi}}(x,y)$ locally uniformly on $\mathbb R_+\times H_n\times H_n$ in $\cali{C}^\infty$ topology. Hence, $\lim_{k\To+\infty}A_{(k)}(t,x,y)=e^{-t\Box^q_{H_n,\Phi}}(x,y)$ locally uniformly on $\mathbb R_+\times H_n\times H_n$ in $\cali{C}^\infty$ topology. 
\end{proof}

\section{Asymptotics of the heat kernels}\label{heatasymptotic}

Our main objective of this section is to compute $e^{-t\Box^q_{H_n,\Phi}}(x,y)$ and apply it to prove Theorem \ref{main2}.

\subsection{The heat kernel on the Heisenberg group $H_n$} 
\

\vspace{0.25cm}

Until further notice, we do not assume that $Y(q)$ holds.  
Consider $\mathbb C^n$. Let $\langle\,\cdot\mid\cdot\,\rangle_{\mathbb C^n}$ be the Hermitian inner product on $T^{*0,q}\mathbb C^n$ so that 
\[\set{d\ol z^J\mid J=(j_1,\ldots,j_q), 1\leq j_1<\cdots<j_q\le n}\]
is an orthonormal basis for $T^{*0,q}\mathbb C^n$, where $T^{*0,q}\mathbb C^n$ denotes the bundle of $(0,q)$ forms of $\mathbb C^n$. 
Let $\Omega^{0,q}(\mathbb C^n)$ denote the space of smooth $(0,q)$ forms of $\mathbb C^n$ and put $\Omega^{0,q}_c(\mathbb C^n):=\Omega^{0,q}(\mathbb C^n)\bigcap\mathscr E'(\mathbb C^n,T^{*0,q}\mathbb C^n)$. Let $(\,\cdot\mid\cdot\,)_{\mathbb C^n}$ be the $L^2$ inner product on $\Omega^{0,q}_c(\mathbb C^n)$ 
induced by $\langle\,\cdot\mid\cdot\,\rangle_{\mathbb C^n}$ and let $L^2_{(0,q)}(\mathbb C^n)$ be the completion of $\Omega^{0,q}_c(\mathbb C^n)$ with respect to 
$(\,\cdot\mid\cdot\,)_{\mathbb C^n}$. 

For $\eta\in\mathbb R$, $j, l=1,\ldots,n$, put
\begin{equation}\label{Phieta}
\Phi_{\eta}=-2\sum_{j=1}^{n}\eta\lambda_j|z_j|^2+\sum_{j,l}^{n}\mu_{j,l}\ol z_jz_l,
\end{equation}
where $\lambda_j$ and $\mu_{j,l}$ are as in \eqref{U} and \eqref{phi0}. In particular, $\Phi_0=\sum^n_{j,l=1}\mu_{j,l}\ol z_jz_l$. 
Let
$$
\dbar_{\eta}=\sum_{j=1}^nd\ol z_j\wedge\left(\frac{\pa}{\pa\ol z_j}+\frac{1}{2}\frac{\pa\Phi_\eta}{\pa\ol z_j}\right): \Omega^{0,q}(\mathbb C^n)\To\Omega^{0,q}(\mathbb C^n).
$$
Let 
\[\dbarstar_\eta:  \Omega^{0,q+1}(\mathbb C^n)\To\Omega^{0,q}(\mathbb C^n)\]
be the formal adjoint of $\dbar_\eta$ with respect to $(\,\cdot\mid\cdot\,)_{\mathbb C^n}$. It is easy to check that 
$$
\dbarstar_{\eta}=\sum_{j=1}^n(d\ol z_j\wedge)^*\left(-\frac{\pa}{\pa z_j}+\frac{1}{2}\frac{\pa\Phi_\eta}{\pa z_j}\right),
$$
where $(d\ol z_j\wedge)^*$ is the adjoint of $d\ol z_j\wedge$ with respect to $\langle\,\cdot\mid\cdot\,\rangle_{\mathbb C^n}$, $j=1,\ldots,n$. Let 
\[\Box_\eta:=\dbar_{\eta}\dbarstar_\eta+\dbarstar_\eta\dbar_\eta: {\rm Dom\,}\Box_\eta\subset L^2_{(0,q)}(\mathbb C^n)\To L^2_{(0,q)}(\mathbb C^n),\]
where ${\rm Dom\,}\Box_\eta=\set{u\in L^2_{(0,q)}(\mathbb C^n)\mid\Box_\eta u\in L^2_{(0,q)}(\mathbb C^n)}$. It is not difficult to see that $\Box_\eta$ is a non-negative self-adjoint operator. On $\Omega^{0,q}(\mathbb C^n)$, we can check that 
\begin{equation}\label{boxeta2}
\begin{aligned}
\Box_\eta=\sum_{j=1}^{n}\left(-\frac{\pa}{\pa z_j}+\frac{1}{2}\frac{\pa\Phi_\eta}{\pa z_j}\right)\left(\frac{\pa}{\pa\ol z_j}+\frac{1}{2}\frac{\pa\Phi_\eta}{\pa\ol z_j}\right)
+\sum_{j,l=1}^{n}d\ol z_j\wedge \left(d\ol z_l\wedge\right)^*
\frac{\pa^2\Phi_{\eta}}{\pa\ol z_j\pa z_l}.
\end{aligned}
\end{equation}
Let $\mathcal{S}^{0,q}(H_n)$ be the space of Schwartz test $(0,q)$ forms. Let 
\begin{equation}\label{e-gue210518yydI}
\begin{split}
G: \Omega^{0,q}_c(H_n)&\To\mathcal{S}^{0,q}(H_n),\\
u(z,\theta)&\To\int_{\mathbb R}u(z,\theta)e^{-\frac{\beta(\theta-i\lambda\abs{z}^2)}{2}-i\theta\eta-\frac{\Phi_0(z)}{2}}d\theta,
\end{split}
\end{equation}
where $\lambda\abs{z}^2:=\sum^n_{j=1}\lambda_j\abs{z_j}^2$ and we also use $\eta$ to denote $\theta$. 
The following follows from some straightforward calculation, we omit the details. 

\begin{lem}\label{l-gue210518yyd}
Let $u\in\Omega^{0,q}_c(H_n)$. We have 
\[G(\Box^q_{H_n,\Phi}u)=\Box_\eta(Gu).\]
\end{lem}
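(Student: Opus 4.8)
The plan is to read $G$ as a partial Fourier--Laplace transform in $\theta$, twisted by the $z$--dependent factor $w(z,\theta,\eta):=e^{-\frac{\beta(\theta-i\lambda\abs{z}^2)}{2}-i\theta\eta-\frac{\Phi_0(z)}{2}}$ with $\Phi_0(z)=\sum_{j,l}\mu_{j,l}\ol z_jz_l$, and to check that $G$ intertwines each elementary operator appearing in \eqref{boxhn} with the corresponding one in \eqref{boxeta2}. Since $u\in\Omega^{0,q}_c(H_n)$ is smooth with compact support, all integrations by parts in $\theta$ have no boundary contributions and differentiation under the integral sign in the $z$--variables is immediate; moreover $Gu$ is smooth, compactly supported in $z$ and Schwartz in $\eta$, hence lies in $\mathcal S^{0,q}(H_n)$, so that $\Box_\eta(Gu)$ is well defined. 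We note that $G$ acts componentwise on the coefficients of a $(0,q)$ form with respect to the frame $\{d\ol z^J\}$, so it commutes with the constant endomorphisms $d\ol z_j\wedge(d\ol z_l\wedge)^*$ and with multiplication by $z_j$, $\ol z_j$.

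First I would record the basic commutation relations. Integrating by parts in $\theta$ and using $\partial_\theta w=(-\tfrac{\beta}{2}-i\eta)w$ gives $G(\partial_\theta u)=(\tfrac{\beta}{2}+i\eta)\,Gu$; differentiating under the integral sign gives $G(\partial_{\ol z_j}u)=\partial_{\ol z_j}(Gu)-(\partial_{\ol z_j}\log w)\,Gu$ and $G(\partial_{z_j}u)=\partial_{z_j}(Gu)-(\partial_{z_j}\log w)\,Gu$, where $\partial_{\ol z_j}\log w=\tfrac{i\beta}{2}\lambda_jz_j-\tfrac12\sum_l\mu_{j,l}z_l$ and $\partial_{z_j}\log w=\tfrac{i\beta}{2}\lambda_j\ol z_j-\tfrac12\sum_l\mu_{l,j}\ol z_l$. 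Combining these, the weight's $\tfrac{i\beta}{2}\lambda_jz_j$ (resp. $\tfrac{i\beta}{2}\lambda_j\ol z_j$) contribution cancels exactly one of the $i\beta\lambda_j$ terms while $G(\partial_\theta u)$ supplies the $-2\eta\lambda_j$ part of $\partial\Phi_\eta$, and one obtains
\begin{equation*}
G\Bigl(\bigl(\tfrac{\pa}{\pa\ol z_j}+i\lambda_jz_j\tfrac{\pa}{\pa\theta}\bigr)u\Bigr)=\Bigl(\tfrac{\pa}{\pa\ol z_j}+\tfrac12\tfrac{\pa\Phi_\eta}{\pa\ol z_j}\Bigr)Gu,
\end{equation*}
\begin{equation*}
G\Bigl(\bigl(-\tfrac{\pa}{\pa z_j}+i\lambda_j\ol z_j\tfrac{\pa}{\pa\theta}-i\beta\lambda_j\ol z_j+\sum_{l=1}^n\mu_{l,j}\ol z_l\bigr)u\Bigr)=\Bigl(-\tfrac{\pa}{\pa z_j}+\tfrac12\tfrac{\pa\Phi_\eta}{\pa z_j}\Bigr)Gu,
\end{equation*}
the second identity being valid for every smooth form that is compactly supported in $\theta$.

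Then I would assemble. Applying the second identity with $u$ replaced by $\bigl(\tfrac{\pa}{\pa\ol z_j}+i\lambda_jz_j\tfrac{\pa}{\pa\theta}\bigr)u\in\Omega^{0,q}_c(H_n)$ and then the first identity shows that $G$ of the $j$--th second-order summand of \eqref{boxhn} equals $\bigl(-\tfrac{\pa}{\pa z_j}+\tfrac12\tfrac{\pa\Phi_\eta}{\pa z_j}\bigr)\bigl(\tfrac{\pa}{\pa\ol z_j}+\tfrac12\tfrac{\pa\Phi_\eta}{\pa\ol z_j}\bigr)Gu$. For the zeroth-order (curvature) term, since $G$ commutes with $d\ol z_j\wedge(d\ol z_l\wedge)^*$ and with constants and $G(\partial_\theta u)=(\tfrac\beta2+i\eta)Gu$, the scalar factor becomes $2i\lambda_j\delta_{j,l}(\tfrac\beta2+i\eta)+\mu_{j,l}-i\beta\lambda_j\delta_{j,l}=\mu_{j,l}-2\eta\lambda_j\delta_{j,l}=\frac{\pa^2\Phi_\eta}{\pa\ol z_j\pa z_l}$. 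Summing over $j$ and over $j,l$ and comparing with \eqref{boxeta2} gives $G(\Box^q_{H_n,\Phi}u)=\Box_\eta(Gu)$.

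The argument is essentially bookkeeping, and the only point that needs care is tracking the cancellation of the several $i\beta\lambda_j$--terms --- some arising from the twist $w$, the rest from $\Box^q_{H_n,\Phi}$ --- and recognizing the surviving coefficients as derivatives of $\Phi_\eta$, using $\partial_{\ol z_j}\Phi_\eta=-2\eta\lambda_jz_j+\sum_l\mu_{j,l}z_l$, $\partial_{z_j}\Phi_\eta=-2\eta\lambda_j\ol z_j+\sum_l\mu_{l,j}\ol z_l$ and $\partial^2_{\ol z_j z_l}\Phi_\eta=\mu_{j,l}-2\eta\lambda_j\delta_{j,l}$, together with the convention $\Phi_0=\sum_{j,l}\mu_{j,l}\ol z_jz_l$ fixed after \eqref{Phieta}. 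No analytic input beyond the (trivial, by compact support) justification of integration by parts and of differentiation under the integral is required.
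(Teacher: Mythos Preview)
Your proof is correct and is precisely the ``straightforward calculation'' the paper alludes to; the paper itself omits the details entirely. Your intertwining identities for $\ol U_{j,H_n}$, for its formal adjoint, and for the curvature endomorphism are all verified, and the bookkeeping of the $i\beta\lambda_j$--cancellations is accurate.
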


We will construct the heat kernel for $\Box^q_{H_n,\Phi}$ by using Lemma~\ref{l-gue210518yyd} and some tricks. We recall now Mehler's formula. 
Consider $\mathbb C^n$. We will use the same notations as before. Let 
\[A: T^{1,0}\mathbb C^n\To T^{1,0}\mathbb C^n\]
be an invertible self-adjoint complex endomorphism, where $T^{1,0}\mathbb C^n$ is the holomorphic tangent bundle of $\mathbb C^n$. 
We extend $A$ to $\mathbb CT\mathbb C^n$ by defining $A\ol v:=-\ol {Av}$, for any $v\in T^{1,0}\mathbb C^n$, 
then $iA$ induces an anti-symmetric endomorphism on $T\mathbb C^n$. We will denote by ${\rm det\,}$ the determinant on $T^{1,0}\mathbb C^n$ with respect to 
$\langle\,\cdot\mid\cdot\,\rangle_{\mathbb C^n}$. Write $z_j=x_{2j-1}+ix_{2j}$, $j=1,\ldots,n$. Let $e_j:=\frac{1}{\sqrt{2}}\frac{\partial}{\partial x_j}$, $j=1,\ldots,2n$. 
Then, $\set{e_j}^{2n}_{j=1}$ is an orthonormal basis of $T\mathbb C^n$. For every $x\in\mathbb C^n$, we will identify $x$ with the vector field $\sum^{2n}_{j=1}x_j\frac{\partial}{\partial x_j}$. Let
\[\mathcal H=\mathcal H(x):=-\sum_{j}^{2n}\left(e_j+\frac{1}{2}\langle\,Ax\mid e_j\,\rangle_{\mathbb C^n}\right)^2-\Tr_{T^{1,0}\C^n}[A].\]
Then, $\mathcal H(x)$ is a second order P.D.E.. Put 
\[\mathcal H: {\rm Dom\,}\mathcal H\subset L^2_{(0,q)}(\mathbb C^n)\To L^2_{(0,q)}(\mathbb C^n),\]
where ${\rm Dom\,}\mathcal H=\set{u\in L^2_{(0,q)}(\mathbb C^n)\mid\mathcal{H}u\in L^2_{(0,q)}(\mathbb C^n)}$. It is straightforward to check that 
$\mathcal H$ is self-adjoint. Let $e^{-t\mathcal H}$ be the heat operator of $\mathcal H$ and let 
\[e^{-t\mathcal H}(x,y)\in\cali{C}^\infty(\mathbb R_+\times\mathbb C^n\times\mathbb C^n, T^{*0,q}\mathbb C^n\boxtimes(T^{*0,q}\mathbb C^n)^*)\]
be the distribution kernel of $e^{-t\mathcal H}$ with respect to $(\,\cdot\mid\cdot\,)_{\mathbb C^n}$. Note that 
\[(e^{-t\mathcal H}u)(x)=\int_{\mathbb C^n}e^{-t\mathcal{H}}(x,y)u(y)dv(y),\ \ u\in L^2_{(0,q)}(\mathbb C^n),\]
where $dv(y)=2^ndy_1\cdots dy_{2n}$. We have the following Mehler's formula (see \cite[Appendix E.2]{MM07}): 

\begin{thm}\label{t-gue210522yyda}
With the notations used above, we have
	\begin{equation}\label{mehler}
	\begin{aligned}
	e^{-t\mathcal H}(x,y)=&\frac{1}{(2\pi)^n}\dfrac{\det A}{\det\left(1-e^{-2tA}\right)}
	\exp\bigg\{-\frac{1}{2}\Big\langle\,\frac{A/2}{\tanh(tA)}x\mid x\,\Big\rangle_{\mathbb C^n}\\
	&-\frac{1}{2}\Big\langle\,\frac{A/2}{\tanh(tA)}y\mid y\,\Big\rangle_{\mathbb C^n}
	+\Big\langle\,\frac{A/2}{\sinh(tA)}e^{tA}x\mid y\,\Big\rangle_{\mathbb C^n}\bigg\}.
	\end{aligned}
	\end{equation}
\end{thm}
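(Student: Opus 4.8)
The statement to prove is Mehler's formula (Theorem~\ref{t-gue210522yyda}), which gives the explicit heat kernel for the generalized harmonic oscillator $\mathcal H$ on $\mathbb C^n$. Since the paper cites \cite[Appendix E.2]{MM07} for this, I should propose a proof plan that reconstructs the standard argument.

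\textbf{Plan of proof.} The plan is to reduce to the one-dimensional case by diagonalizing $A$, handle the scalar Mehler formula directly, and then reassemble. First, since $A$ is an invertible self-adjoint complex endomorphism of $T^{1,0}\mathbb C^n$, I diagonalize it: choose an orthonormal basis $\{w_1,\dots,w_n\}$ of $T^{1,0}\mathbb C^n$ with $Aw_j = a_j w_j$, $a_j\in\mathbb R\setminus\{0\}$. This induces real coordinates in which the operator $\mathcal H$ splits as a sum $\mathcal H = \sum_{j=1}^n \mathcal H_j$ of commuting operators, each acting on a single complex variable (equivalently a pair of real variables), of the form $\mathcal H_j = -(e_{2j-1}+\tfrac12\langle Ax\mid e_{2j-1}\rangle)^2 - (e_{2j}+\tfrac12\langle Ax\mid e_{2j}\rangle)^2 - a_j$. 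Because the summands commute, $e^{-t\mathcal H} = \prod_j e^{-t\mathcal H_j}$ and the kernel factorizes accordingly, so it suffices to establish the formula when $n=1$.

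\textbf{The one-dimensional case.} For $n=1$ with $A = a$ a nonzero real scalar, $\mathcal H_a = -(e_1 + \tfrac{a}{2}\langle x\mid e_1\rangle)^2 - (e_2+\tfrac{a}{2}\langle x \mid e_2\rangle)^2 - a$ is (up to the constant shift $-a$) the magnetic Laplacian / shifted harmonic oscillator on $\mathbb R^2$. Here I would make an ansatz for the kernel of the form
\[
p_t(x,y) = \frac{1}{2\pi}\,g(t)\exp\Bigl(-\alpha(t)(|x|^2+|y|^2) + \beta(t)\langle x\mid y\rangle_{\mathbb C^1} + \gamma(t)(x\wedge y)\Bigr),
\]
plug it into the heat equation $\partial_t p_t + \mathcal H_a p_t = 0$, and match coefficients to obtain a system of ODEs for $g,\alpha,\beta,\gamma$ with initial conditions forcing $p_t \to \delta_y$ as $t\to 0^+$ (i.e.\ $g(t)\sim \tfrac{1}{t}$, $\alpha(t)\sim\tfrac{1}{4t}$, etc.). Solving this Riccati-type system yields $\alpha = \tfrac{a/4}{\tanh(ta)}$, $\beta = \tfrac{a/2}{\sinh(ta)}\cosh(ta)$ together with the appropriate imaginary phase, and $g(t) = \tfrac{a}{1-e^{-2ta}}$ (the factor $e^{ta}$ in the mixed term $\langle \tfrac{A/2}{\sinh(tA)}e^{tA}x\mid y\rangle$ is exactly what packages the symmetric $\cosh$ part and the antisymmetric phase part together, using that $iA$ acts antisymmetrically on $T\mathbb C^1$ so $\langle e^{tA}x\mid y\rangle = \cosh(ta)\langle x\mid y\rangle + i\sinh(ta)(\text{antisymmetric form})$). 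Verifying the initial condition uses the classical Gaussian limit $\tfrac{1}{2\pi}\cdot\tfrac{1}{t}e^{-|x-y|^2/(4t)}\to\delta$. Alternatively, one can invoke Mehler's classical formula for the one-dimensional harmonic oscillator directly and translate via the unitary conjugation that removes the magnetic phase; I would cite \cite[Appendix E.2]{MM07} or a standard reference here rather than redo every computation.

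\textbf{Reassembly and the main obstacle.} Finally, I multiply the one-dimensional kernels over $j=1,\dots,n$; the products of the scalar prefactors give $\tfrac{1}{(2\pi)^n}\tfrac{\det A}{\det(1-e^{-2tA})}$ since $\det$ is multiplicative over the eigenbasis, and the quadratic forms in the exponent reassemble into the coordinate-free expressions $\langle\tfrac{A/2}{\tanh(tA)}x\mid x\rangle_{\mathbb C^n}$ etc.\ by the spectral theorem applied to the operators $\tfrac{A/2}{\tanh(tA)}$ and $\tfrac{A/2}{\sinh(tA)}e^{tA}$ (which are well-defined continuous functions of $A$ since $A$ is invertible, so $\tanh(tA)$ and $\sinh(tA)$ are invertible). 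The main obstacle is bookkeeping rather than conceptual: correctly tracking the real-vs-complex structure so that the antisymmetric phase term in each factor matches the $e^{tA}$ in the stated mixed term, and confirming that the $(0,q)$-form part of the kernel is simply scalar multiplication by $e^{-t\mathcal H}(x,y)$ on $T^{*0,q}\mathbb C^n$ — this last point holds because $\mathcal H$ acts componentwise on forms written in the constant frame $\{d\bar z^J\}$ (there is no zeroth-order curvature term mixing components), so the form-valued heat kernel is the scalar one times the identity endomorphism, consistent with the way $e^{-t\mathcal H}(x,y)$ is treated as an element of $T^{*0,q}\mathbb C^n\boxtimes (T^{*0,q}\mathbb C^n)^*$ in the statement.
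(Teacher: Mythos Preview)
Your proposal is correct and follows the standard route (diagonalize $A$, reduce to the scalar Mehler formula, reassemble), which is essentially what the cited reference \cite[Appendix~E.2]{MM07} does; the paper itself does not prove Theorem~\ref{t-gue210522yyda} at all but simply quotes it from that source. Your sketch therefore supplies strictly more than the paper, and the only point to watch is the bookkeeping you already flagged: that the extension of $A$ to $T\mathbb C^n$ via $A\overline v=-\overline{Av}$ makes $iA$ real antisymmetric, which is exactly what produces the $e^{tA}$ factor in the mixed term when the one-dimensional phases are recombined.
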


Let $e^{-t\Box_\eta}$ be the heat operator of $\Box_\eta$ and let $e^{-t\Box_\eta}(x,y)\in\cali{C}^\infty(\mathbb R_+\times\mathbb C^n\times\mathbb C^n, T^{*0,q}\mathbb C^n\boxtimes(T^{*0,q}\mathbb C^n)^*)$ be the distribution kernel of $e^{-t\Box_\eta}$. Let $\dot{R}^\eta:T^{1,0}\C^n\To T^{1,0}\C^n$ be the linear map defined by
\begin{equation}\label{Reta}
\langle\,\dot{R}^\eta U\mid\ol V\,\rangle_{\mathbb C^n}=\pa\dbar\Phi_{\eta}(U,\ol V),\quad U,V\in T^{1,0}\C^n
\end{equation}
and let
\begin{equation}\label{omega}
\omega^\eta_{\mathbb C^n}:=\sum_{j,l=1}^n\frac{\pa^2\Phi_{\eta}}{\pa\ol z_j\pa z_l}d\ol z_j\wedge (d\ol z_l\wedge)^*,
\end{equation}
where $\Phi_\eta$ is as in \eqref{Phieta}. From\eqref{boxeta2}, we can rewrite $\Box_\eta$ as  
\begin{equation}\label{boxeta2real}
\begin{aligned}
\Box_\eta=\frac{1}{2}\left[-\sum_{j=1}^{2n}\left(e_j+\frac{1}{2}\langle\,\dot R^\eta x\mid e_j\,\rangle_{\mathbb C^n}\right)^2-\Tr \dot R^\eta \right]+\omega^\eta_{\mathbb C^n}.
\end{aligned}
\end{equation}
From \eqref{mehler} and \eqref{boxeta2real}, we deduce that 
\begin{equation}\label{e-gue210521yyd}
\begin{aligned}
e^{-t\Box_\eta}(x,y)=&\frac{1}{(2\pi)^n}\dfrac{\det\dot R^\eta}{\det\big(1-e^{-t \dot R^\eta}\big)}
\exp\bigg\{-t\omega^\eta_{\mathbb C^n}-\frac{1}{2}\Big\langle\,\frac{\dot R^\eta/2}{\tanh(t\dot R^\eta/2)}x\mid x\,\Big\rangle_{\mathbb C^n}\\
&-\frac{1}{2}\Big\langle\,\frac{\dot R^\eta/2}{\tanh(t\dot R^\eta/2)}y\mid y\,\Big\rangle_{\mathbb C^n}
+\Big\langle\,\frac{\dot R^\eta/2}{\sinh(t\dot R^\eta/2)}e^{t\dot R^\eta/2}x\mid y\,\Big\rangle_{\mathbb C^n}\bigg\}.
\end{aligned}
\end{equation}

We introduce some notations. As before, on $H_n$, let $T_{H_n}=-\frac{\partial}{\partial\theta}$ and consider 
\[-iT_{H_n}: {\rm Dom\,}(-iT_{H_n})\subset L^2_{(0,q)}(H_n,\Phi)\To L^2_{(0,q)}(H_n,\Phi),\]
where ${\rm Dom\,}(-iT_{H_n})=\set{u\in L^2_{(0,q)}(H_n,\Phi)\mid -iT_{H_n}u\in L^2_{(0,q)}(H_n,\Phi)}$. It is not difficult to see that $-iT_{H_n}$ is self-adjoint. For $\delta_1<\delta_2$, $\delta_1, \delta_2\in\mathbb R$, let 
\begin{equation}\label{e-gue210604yydIx}
Q_{[\delta_1,\delta_2]}: L^2_{(0,q)}(H_n)\To E_{-iT_{H_n}}([\delta_1,\delta_2])
\end{equation}
be the orthogonal projection with respect to $(\,\cdot\mid\cdot\,)_{H_n}$, where $E_{-iT_{H_n}}([\delta_1,\delta_2])$ denotes the spectral measure of $-iT_{H_n}$. 
It was shown in~\cite[Lemma 4.7]{HMW} that 
\begin{equation}\label{e-gue210521yydI}
(Q_{[\delta_1,\delta_2]}u)(x)=\frac{1}{(2\pi)^{2n+1}}
\int e^{i<x-y,\eta>}1_{[\delta_1,\delta_2]}(\eta_{2n+1})
u(y)dyd\eta\in\Omega^{0,q}(H_n)\bigcap L^2_{(0,q)}(H_n,\Phi),
\end{equation}
for every $u\in\Omega^{0,q}_c(H_n)$, where $1_{[\delta_1,\delta_2]}(\eta_{2n+1})=1$ if $\eta_{2n+1}\in[\delta_1,\delta_2]$, 
$1_{[\delta_1,\delta_2]}(\eta_{2n+1})=0$ if $\eta_{2n+1}\notin[\delta_1,\delta_2]$. For $\delta>0$, 
let 
\begin{equation}\label{e-gue210604yydu}
e^{-t\Box^{q,\delta}_{H_n,\Phi}}:=e^{-t\Box^q_{H_n,\Phi}}\circ Q_{[-\delta,\delta]}: L^2_{(0,q)}(H_n,\Phi)\To L^2_{(0,q)}(H_n,\Phi)
\end{equation}
and 
let $e^{-t\Box^{q,\delta}_{H_n,\Phi}}(x,y)\in\mathscr D'(\mathbb R_+\times H_n\times H_n, T^{*0,q}H_n\boxtimes(T^{*0,q}H_n)^*)$ be the 
distribution kernel of $e^{-t\Box^{q,\delta}_{H_n,\Phi}}$. It is straightforward to check that 
\begin{equation}\label{e-gue210521yydII}
(\frac{\partial}{\partial t}+\Box^q_{H_n,\Phi})e^{-t\Box^{q,\delta}_{H_n,\Phi}}=0\ \ \mbox{on $L^2_{(0,q)}(H_n,\Phi)$},
\end{equation}
\begin{equation}\label{e-gue210521yydIII}
\lim_{t\To0}e^{-t\Box^{q,\delta}_{H_n,\Phi}}=Q_{[-\delta,\delta]}\ \ \mbox{on $L^2_{(0,q)}(H_n,\Phi)$},
\end{equation}
\begin{equation}\label{e-gue210521ycd}
\lim_{\delta\To\infty}e^{-t\Box^{q,\delta}_{H_n,\Phi}}u=e^{-t\Box^{q}_{H_n,\Phi}}u,\ \ \mbox{for every $u\in L^2_{(0,q)}(H_n,\Phi)$}, 
\end{equation}
\begin{equation}\label{e-gue210521ycdI}
e^{-t\Box^{q,\delta}_{H_n,\Phi}}u\in E_{-iT_{H_n}}([-\delta,\delta]),\ \ \mbox{for every $u\in L^2_{(0,q)}(H_n,\Phi)$ and every $t>0$}. 
\end{equation}

We come back to our situation. For every $\delta>0$ and $t>0$, put 
\begin{equation}\label{e-gue210521ycdII}
\begin{split}
&P_\delta(t,x,y)\\
&:=\frac{1}{2\pi}\int e^{i<x_{2n+1}-y_{2n+1},\eta>+\frac{\beta}{2}\bigr((x_{2n+1}-y_{2n+1})+i\lambda(-\abs{z}^2+\abs{w}^2)\bigr)+\frac{-\Phi_0(w)+\Phi_0(z)}{2}}\\
&\quad\quad\quad\quad\times e^{-t\Box_\eta}(z,w)1_{[-\delta,\delta]}(\eta)d\eta\in\cali{C}^\infty(\mathbb R_+\times H_n\times H_n,T^{*0,q}H_n\boxtimes(T^{*0,q}H_n)^*),
\end{split}
\end{equation}
where $z=(x_1,\ldots,x_{2n})$, $w=(y_1,\ldots,y_{2n})$. Let
\[P_\delta(t): \Omega^{0,q}_c(H_n)\To\Omega^{0,q}(H_n)\]
be the continuous operator given by 
\begin{equation}\label{e-gue210521ycdIII}
(P_\delta(t)u)(x)=\int P_\delta(t,x,y)u(y)dv_{H_n}(y),\ \ u\in\Omega^{0,q}_c(H_n).
\end{equation} 
From Parserval's formula, it is not difficult to see that 
\begin{equation}\label{e-gue210524yyd}
P_\delta(t)u\in\Omega^{0,q}(H_n)\bigcap L^2_{(0,q)}(H_n,\Phi),\ \ \mbox{for every $u\in\Omega^{0,q}_c(H_n)$}. 
\end{equation} 
From Lemma~\ref{l-gue210518yyd} and \eqref{e-gue210521yydI}, we can check that 
\begin{equation}\label{e-gue210521yyda}
\begin{split}
&P'_\delta(t)u+\Box^{q}_{H_n,\Phi}P_\delta(t)u=0,\ \ \mbox{for every $u\in\Omega^{0,q}_c(H_n)$},\\
&\lim_{t\To0}P_\delta(t)u=Q_{[-\delta,\delta]}u, \ \ \mbox{for every $u\in\Omega^{0,q}_c(H_n)$},\\
&P_\delta(t)u\in E_{-iT_{H_n}}([-\delta,\delta]), \  \ \ \mbox{for every $u\in\Omega^{0,q}_c(H_n)$}.
\end{split}
\end{equation}
From \eqref{e-gue210521yydII}, \eqref{e-gue210521yydIII}, \eqref{e-gue210521ycdI} and \eqref{e-gue210521yyda}, 
we can repeat the procedure in the final part of the proof of Theorem~\ref{t-gue210503yyd} and deduce that 
\[P_\delta(t)=e^{-t\Box^{q,\delta}_{H_n,\Phi}}. \]

From this observation and \eqref{e-gue210521ycd}, we deduce 

\begin{thm}\label{t-gue210521yyda}
We do not assume that $Y(q)$ holds. With the notations used above, for every $\delta>0$, we have 
\begin{equation}\label{e-gue210521yydb}
\begin{split}
&e^{-t\Box^{q,\delta}_{H_n,\Phi}}(x,y)\\
&=\frac{1}{2\pi}\int e^{i<x_{2n+1}-y_{2n+1},\eta>+\frac{\beta}{2}\bigr((x_{2n+1}-y_{2n+1})+i\lambda(-\abs{z}^2+\abs{w}^2)\bigr)+\frac{-\Phi_0(w)+\Phi_0(z)}{2}}\\
&\quad\quad\quad\quad\times e^{-t\Box_\eta}(z,w)1_{[-\delta,\delta]}(\eta)d\eta\in\cali{C}^\infty(\mathbb R_+\times H_n\times H_n,T^{*0,q}H_n\boxtimes(T^{*0,q}H_n)^*),
\end{split}
\end{equation}
where $z=(x_1,\ldots,x_{2n})$, $w=(y_1,\ldots,y_{2n})$, $e^{-t\Box_\eta}(z,w)$ is given by \eqref{e-gue210521yyd}. In particular, we have 
\begin{equation}\label{e-gue210523yyda}
	e^{-t\Box_{H_n,\Phi}^{q,\delta}}(0,0)=\frac{1}{(2\pi)^{n+1}}\int_\R\dfrac{\det\dot R^\eta}{\det\big(1-e^{-t \dot R^\eta}\big)}e^{-t\omega^\eta_{\mathbb C^n}}1_{[-\delta,\delta]}(\eta)d\eta,
	\end{equation}
	where $\dot R^\eta$ and $\omega^\eta_{\mathbb C^n}$ are as in \eqref{Reta} and \eqref{omega} respectively.

Moreover, we have 
\begin{equation}\label{e-gue210523yyd}
\begin{split}
&\lim_{\delta\To+\infty}\frac{1}{2\pi}\int e^{i<x_{2n+1}-y_{2n+1},\eta>+\frac{\beta}{2}\bigr((x_{2n+1}-y_{2n+1})+i\lambda(-\abs{z}^2+\abs{w}^2)\bigr)+\frac{-\Phi_0(w)+\Phi_0(z)}{2}}\\
&\quad\quad\quad\quad\times e^{-t\Box_\eta}(z,w)1_{[-\delta,\delta]}(\eta)d\eta\\
&=e^{-t\Box^q_{H_n,\Phi}}(x,y)\  \ \mbox{in $\mathscr D'(\mathbb R_+\times H_n\times H_n, T^{*0,q}H_n\boxtimes(T^{*0,q}H_n)^*)$}.
\end{split}
\end{equation} 
\end{thm}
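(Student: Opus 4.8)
The plan is to build the kernel explicitly and then identify the resulting operator with $e^{-t\Box^{q,\delta}_{H_n,\Phi}}$, after which all three displays fall out. Concretely, I take $P_\delta(t,x,y)$ and $P_\delta(t)$ as in \eqref{e-gue210521ycdII} and \eqref{e-gue210521ycdIII} and aim to prove $P_\delta(t)=e^{-t\Box^{q,\delta}_{H_n,\Phi}}$; granting this, \eqref{e-gue210521yydb} is simply the identification of the distribution kernel of $P_\delta(t)$. The identification rests on the three properties collected in \eqref{e-gue210521yyda}. I would verify the heat equation $P'_\delta(t)u+\Box^q_{H_n,\Phi}P_\delta(t)u=0$ by applying the partial transform $G$ of \eqref{e-gue210518yydI} fibrewise in the frequency variable $\eta$: the ansatz \eqref{e-gue210521ycdII} is set up precisely so that $G P_\delta(t)u$ decomposes over $\eta$ into $e^{-t\Box_\eta}$ acting on the $\eta$-component of $Gu$, and Lemma~\ref{l-gue210518yyd} (which conjugates $\Box^q_{H_n,\Phi}$ to $\Box_\eta$) together with the heat equation for $e^{-t\Box_\eta}(z,w)$ from \eqref{e-gue210521yyd} then yields the claim after undoing $G$. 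The initial condition $\lim_{t\to0}P_\delta(t)u=Q_{[-\delta,\delta]}u$ follows from $\lim_{t\to0}e^{-t\Box_\eta}=I$ on $L^2_{(0,q)}(\mathbb C^n)$ combined with the explicit formula \eqref{e-gue210521yydI} for $Q_{[-\delta,\delta]}$, and $P_\delta(t)u\in E_{-iT_{H_n}}([-\delta,\delta])$ is forced by the cutoff $1_{[-\delta,\delta]}(\eta)$, again via \eqref{e-gue210521yydI}. Finally, since $P_\delta(t)u\in\Omega^{0,q}(H_n)\cap L^2_{(0,q)}(H_n,\Phi)$ by \eqref{e-gue210524yyd} and $\Box^q_{H_n,\Phi}P_\delta(t)u=-P'_\delta(t)u\in L^2_{(0,q)}(H_n,\Phi)$, Lemma~\ref{l-gue210501yyd} gives $P_\delta(t)u\in{\rm Dom\,}\Box^q_{H_n,\Phi}$.

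With these in hand I would repeat the Duhamel argument from the final part of the proof of Theorem~\ref{t-gue210503yyd}: for $u,v\in\Omega^{0,q}_c(H_n)$, integrate $\frac{\partial}{\partial s}\bigl(\,P_\delta(t-s)u\mid e^{-s\Box^q_{H_n,\Phi}}Q_{[-\delta,\delta]}v\,\bigr)_\Phi$ from $0$ to $t$, move $\Box^q_{H_n,\Phi}$ across the inner product using self-adjointness (legitimate because $P_\delta(t-s)u\in{\rm Dom\,}\Box^q_{H_n,\Phi}$), and match the boundary terms using \eqref{e-gue210521yydII}, \eqref{e-gue210521yydIII} and \eqref{e-gue210521yyda}. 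This forces $P_\delta(t)=e^{-t\Box^{q,\delta}_{H_n,\Phi}}$, which is exactly \eqref{e-gue210521yydb}.

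For \eqref{e-gue210523yyda} I would evaluate \eqref{e-gue210521yydb} at $x=y=0$: then $x_{2n+1}=y_{2n+1}=0$, $z=w=0$, $\Phi_0(0)=0$, so the outer exponential equals $1$, and from \eqref{e-gue210521yyd} the quadratic-in-$(z,w)$ terms drop out, leaving $e^{-t\Box_\eta}(0,0)=\frac{1}{(2\pi)^n}\frac{\det\dot R^\eta}{\det(1-e^{-t\dot R^\eta})}e^{-t\omega^\eta_{\mathbb C^n}}$; combining with the outer $\frac{1}{2\pi}$ and the $\eta$-integral against $1_{[-\delta,\delta]}$ gives \eqref{e-gue210523yyda}. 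Finally, \eqref{e-gue210523yyd} is a soft consequence of \eqref{e-gue210521ycd}: since $e^{-t\Box^{q,\delta}_{H_n,\Phi}}u\to e^{-t\Box^q_{H_n,\Phi}}u$ in $L^2_{(0,q)}(H_n,\Phi)$ for each fixed $u$ and $t$, pairing the (smooth) kernels against $u(y)\otimes\overline{v(x)}$ with $u,v\in\Omega^{0,q}_c(H_n)$ yields convergence in $\mathscr D'(\mathbb R_+\times H_n\times H_n,T^{*0,q}H_n\boxtimes(T^{*0,q}H_n)^*)$, with joint continuity in $t$ handled exactly as in Theorem~\ref{t-gue210503yyd}.

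The main obstacle is the identification $P_\delta(t)=e^{-t\Box^{q,\delta}_{H_n,\Phi}}$: the Duhamel comparison is valid only once one knows $P_\delta(t)u\in{\rm Dom\,}\Box^q_{H_n,\Phi}$ (via Lemma~\ref{l-gue210501yyd}) and once the three properties in \eqref{e-gue210521yyda} have been checked, and the latter requires careful bookkeeping of the transform $G$, the weight $e^{-\Phi}$, and the frequency cutoff $1_{[-\delta,\delta]}(\eta)$. The remaining computations — Mehler's formula \eqref{mehler}, the rewriting \eqref{boxeta2real} of $\Box_\eta$, and the evaluation at the origin — are routine once the structure is in place.
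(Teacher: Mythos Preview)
Your proposal is correct and follows essentially the same approach as the paper: define $P_\delta(t)$ via \eqref{e-gue210521ycdII}, verify the three properties \eqref{e-gue210521yyda} using Lemma~\ref{l-gue210518yyd} and \eqref{e-gue210521yydI}, invoke Lemma~\ref{l-gue210501yyd} for domain membership, and then run the Duhamel comparison from the end of the proof of Theorem~\ref{t-gue210503yyd} against $e^{-t\Box^{q,\delta}_{H_n,\Phi}}$ to identify the two operators; \eqref{e-gue210523yyda} and \eqref{e-gue210523yyd} then follow from direct evaluation and \eqref{e-gue210521ycd} respectively.
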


Now, assume that $Y(q)$ holds. From \eqref{e-gue210521yyd} and \eqref{e-gue210521ycdII}, it is straightforward to check that for every $\ell\in\mathbb N$, every compact set $K\subset\mathbb R_+\times H_n\times H_n$, there are constants $C>0$, $\varepsilon>0$, such that 
\begin{equation}\label{e-gue210521yydc}
\|P_{\delta_1}(t,x,y)-P_{\delta_2}(t,x,y)\|_{\cali{C}^\ell(K,T^{*0,q}H_n\boxtimes(T^{*0,q}H_n)^*)}\leq Ce^{-\varepsilon\delta_1},\qquad \delta_2>\delta_1>> 1.
\end{equation}
From \eqref{e-gue210521yydc}, we conclude that 
\[\begin{split}
&\lim_{\delta\To+\infty}P_\delta(t,x,y)\\
&=\frac{1}{2\pi}\int e^{i<x_{2n+1}-y_{2n+1},\eta>+\frac{\beta}{2}\bigr((x_{2n+1}-y_{2n+1})+i\lambda(-\abs{z}^2+\abs{w}^2)\bigr)+\frac{-\Phi_0(w)+\Phi_0(z)}{2}}\\
&\quad\quad\quad\quad\times e^{-t\Box_\eta}(z,w)d\eta\\
&\mbox{locally uniformly on $\mathbb R_+\times H_n\times H_n$ in $\cali{C}^\infty$ topology}.
\end{split}\]
Summing, we obtain 

\begin{thm}\label{kernelhn}
Assume that $Y(q)$ holds. We have 
	\begin{equation}\label{e-gue210523yydI}
	\begin{split}
	&e^{-t\Box_{H_n,\Phi}^q}(x,y)\\
	&=\frac{1}{2\pi}\int e^{i<x_{2n+1}-y_{2n+1},\eta>+\frac{\beta}{2}\bigr((x_{2n+1}-y_{2n+1})+i\lambda(-\abs{z}^2+\abs{w}^2)\bigr)+\frac{-\Phi_0(w)+\Phi_0(z)}{2}}\\
&\quad\quad\quad\quad\times e^{-t\Box_\eta}(z,w)d\eta,
\end{split}	\end{equation}
where $z=(x_1,\ldots,x_{2n})$, $w=(y_1,\ldots,y_{2n})$, $e^{-t\Box_\eta}(z,w)$ is given by \eqref{e-gue210521yyd}. 

In particular, we have 
\begin{equation}\label{e-gue210523yydII}
	e^{-t\Box_{H_n,\Phi}^q}(0,0)=\frac{1}{(2\pi)^{n+1}}\int_\R\dfrac{\det\dot R^\eta}{\det\big(1-e^{-t \dot R^\eta}\big)}e^{-t\omega^\eta_{\mathbb C^n}}d\eta,
	\end{equation}
	where $\dot R^\eta$ and $\omega^\eta_{\mathbb C^n}$ are as in \eqref{Reta} and \eqref{omega} respectively.
	\end{thm}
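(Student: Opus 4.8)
The plan is to upgrade the mode of convergence that has already been obtained. The discussion preceding the statement shows, via the exponential decay estimate \eqref{e-gue210521yydc}, that as $\delta\To+\infty$ the family $P_\delta(t,x,y)$ converges locally uniformly on $\mathbb R_+\times H_n\times H_n$ in $\cali{C}^\infty$ topology to the right-hand side of \eqref{e-gue210523yydI}. On the other hand, Theorem~\ref{t-gue210521yyda} — concretely \eqref{e-gue210523yyd}, itself a consequence of the identification $P_\delta(t)=e^{-t\Box^{q,\delta}_{H_n,\Phi}}$ together with the strong convergence $e^{-t\Box^{q,\delta}_{H_n,\Phi}}\To e^{-t\Box^q_{H_n,\Phi}}$ from \eqref{e-gue210521ycd} — tells us that the same family converges to $e^{-t\Box^q_{H_n,\Phi}}(x,y)$ in $\mathscr D'(\mathbb R_+\times H_n\times H_n,T^{*0,q}H_n\boxtimes(T^{*0,q}H_n)^*)$. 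Since locally uniform convergence in $\cali{C}^\infty$ topology implies convergence in $\mathscr D'$ and distributional limits are unique, the two limits must agree, which is precisely \eqref{e-gue210523yydI}. No further work is required for the first formula.

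For the pointwise value \eqref{e-gue210523yydII} I would simply specialize \eqref{e-gue210523yydI} to $x=y=0$. Then $x_{2n+1}=y_{2n+1}=0$ and $z=w=0$, so the scalar prefactor $e^{i\langle x_{2n+1}-y_{2n+1},\eta\rangle+\frac{\beta}{2}((x_{2n+1}-y_{2n+1})+i\lambda(-\abs{z}^2+\abs{w}^2))+\frac{-\Phi_0(w)+\Phi_0(z)}{2}}$ collapses to $1$. In the Mehler formula \eqref{e-gue210521yyd} for $e^{-t\Box_\eta}(z,w)$ the exponent is the zeroth-order term $-t\omega^\eta_{\mathbb C^n}$ together with the $x$-quadratic, $y$-quadratic and $x$-$y$-bilinear terms; the latter three vanish at $z=w=0$, leaving $e^{-t\Box_\eta}(0,0)=\frac{1}{(2\pi)^n}\frac{\det\dot R^\eta}{\det(1-e^{-t\dot R^\eta})}e^{-t\omega^\eta_{\mathbb C^n}}$. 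Inserting this into \eqref{e-gue210523yydI} and absorbing the leftover $\frac{1}{2\pi}$ into $\frac{1}{(2\pi)^n}$ yields the normalization $\frac{1}{(2\pi)^{n+1}}$ in \eqref{e-gue210523yydII}.

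Given what is available in the excerpt this theorem is mostly bookkeeping; the genuine obstacle lies upstream in the decay estimate \eqref{e-gue210521yydc}. What makes that estimate work — and what I would emphasize — is that under hypothesis $Y(q)$ the endomorphism $\dot R^\eta$ (here $\partial\dbar\Phi_\eta$, i.e. the model form of $\dot{\mathcal R}^\phi_x-2\eta\dot{\mathcal L}_x$ after rescaling) has, for $\abs{\eta}$ large, a fixed sign pattern of eigenvalues; this forces the matrix-valued weight $\frac{\det\dot R^\eta}{\det(1-e^{-t\dot R^\eta})}e^{-t\omega^\eta_{\mathbb C^n}}$ to decay exponentially in $\abs{\eta}$, uniformly for $(t,z,w)$ in compact sets, which is the quantitative form of the well-definedness of the map in \eqref{e-gue210718yydt}. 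Hence the cutoff $1_{[-\delta,\delta]}(\eta)$ may be removed harmlessly and the limit persists after differentiating in all variables, so both displayed formulas follow.
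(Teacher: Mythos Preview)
Your argument is correct and follows the paper's own approach essentially line for line: the paper uses the Cauchy-type estimate \eqref{e-gue210521yydc} to get locally uniform $\cali{C}^\infty$ convergence of $P_\delta$, combines this with the distributional convergence \eqref{e-gue210523yyd} from Theorem~\ref{t-gue210521yyda}, and invokes uniqueness of distributional limits. Your explanation of why $Y(q)$ forces the exponential decay in $\abs{\eta}$ is a welcome elaboration of what the paper leaves as ``straightforward to check.''
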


\subsection{Heat kernel asymptotics on CR manifolds} 
\

\vspace{0.25cm}
Our main task is to obtain the asymptotics of $e^{-\frac{t}{k}\Box_{b,k}^q}(x,x)$ as $k\to\infty$ for all $x\in X$. We assume that $Y(q)$ holds. Before proceeding to do so, we shall digress for the moment to illustrate the relationship between $\dot R^\eta$, the curvature of $L$ and the Levi form. 

\begin{defin}\label{d-gue210524yyd}
	Let $L$ be a CR line bundle over $X$ and $h^L$ be the Hermitian fiber metric on $L$ with local weight $\phi$. The curvature of $(L,h^L)$ at $x\in D$ with respect to $\phi$ is the Hermitian quadratic form $\mathcal R_x^{\phi}$ on $T_x^{1,0}X$ defined by
	\begin{equation}\label{curvature}
	\mathcal R_x^\phi(U,\ol V)=\frac{1}{2}\left\langle d(\dbar_b\phi-\pa_b\phi)(x), U\wedge\ol V\right\rangle,\quad U,V\in T^{1,0}_xX,
	\end{equation}
	where $d$ is the usual exterior derivative.
\end{defin}
\noindent 

Let 
\begin{equation}\label{e-gue210524yyds}
\begin{split}
&\dot{\mathcal{R}}^\phi_x: T^{1,0}_xX\To T^{1,0}_xX,\\
&\dot{\mathcal{L}}_x: T^{1,0}_xX\To T^{1,0}_xX,
\end{split}
\end{equation}
be the linear maps given by $\langle\,\dot{\mathcal{R}}^\phi_xU\mid V\,\rangle=\mathcal{R}^\phi_x(U,\ol V)$, 
$\langle\,\dot{\mathcal{L}}_xU\mid V\,\rangle=\mathcal{L}_x(U,\ol V)$, for all $U, V\in T^{1,0}_xX$. For every $\eta\in\mathbb R$, let 
\[{\rm det\,}(\dot{\mathcal{R}}^\phi_x-2\eta\dot{\mathcal{L}}_x)=\mu_1(x)\cdots\mu_n(x),\]
where $\mu_j(x)$, $j=1,\ldots,n$, are the eigenvalues of $\dot{\mathcal{R}}^\phi_x-2\eta\dot{\mathcal{L}}_x$ with respect to $\langle\,\cdot\mid\cdot\,\rangle$, 
and put 
\begin{equation}\label{e-gue210524yydt}
\omega_x^\eta=\sum_{j,l=1}^n(\mathcal{R}^\phi_x-2\eta\mathcal L_x)(U_l,\ol U_j)\ol\omega^j\wedge(\ol\omega^l\wedge)^\star: T^{*0,q}_xX\To T^{*0,q}_xX,
\end{equation}
where $\{U_j\}_{j=1}^n$ is an orthonormal frame of $T^{1,0}_xX$ with dual frame $\{\omega^j\}_{j=1}^n\subset T^{*1,0}X$. 
It should be mentioned that the definition of $\mathcal R_x^{\phi}$ depends on the choice of local trivializations (local weight $\phi$). 
From \cite[Proposition 4.2]{HM12}, it is easy to see that for every $x\in X$, the map
\[\int_\R\dfrac{\det(\dot{\mathcal R}^\phi_x-2\eta\dot{\mathcal L}_x)}{\det\big(1-e^{-t(\dot{\mathcal R}^\phi_x-2\eta\dot{\mathcal L}_x)}\big)}e^{-t\omega_x^\eta}d\eta: T^{*0,q}_xX\To T^{*0,q}_xX\]
is independent of the choice of local weight $\phi$ and hence globally defined.  

We now can prove our first main results:

\begin{proof}[Proof of Theorem~\ref{main2}]
		For $p\in X$, as above,  we choose local coordinates $(z,\theta)$ satisfies \eqref{local1}-\eqref{phi} on some neighborhood of $p$. 
		Note that $(z(p),\theta(p))=(0,0)$. It is straightforward to check that 
		\begin{equation}\label{e-gue210524ycdq}
		\int_\R\dfrac{\det\dot R^\eta}{\det\big(1-e^{-t \dot R^\eta}\big)}e^{-t\omega^\eta_{\mathbb C^n}}d\eta=\int_\R\dfrac{\det(\dot{\mathcal R}^\phi_p-2\eta\dot{\mathcal L}_p)}{\det\big(1-e^{-t(\dot{\mathcal R}^\phi_p-2\eta\dot{\mathcal L}_p)}\big)}e^{-t\omega_p^\eta}d\eta,
		\end{equation}
		where $\dot R^\eta$ and $\omega^\eta_{\mathbb C^n}$ are as in \eqref{Reta} and \eqref{omega} respectively. From Theorem~\ref{t-gue210503yyd}, Theorem~\ref{kernelhn} and \eqref{e-gue210524ycdq}, we have
		\[
		\begin{split}
    	\lim_{k\to\infty}k^{-(n+1)}e^{-\frac{t}{k}\Box^q_{b,k}}(0,0)&=\frac{1}{(2\pi)^{n+1}}\int_\R\dfrac{\det\dot R^\eta}{\det\big(1-e^{-t \dot R^\eta}\big)}e^{-t\omega^\eta_{\mathbb C^n}}d\eta\\
    	&=\frac{1}{(2\pi)^{n+1}}\int_\R\dfrac{\det(\dot{\mathcal{R}}^\phi_p-2\eta\dot{\mathcal L}_p)}{\det\big(1-e^{-t(\dot{\mathcal R}^\phi_p-2\eta\dot{\mathcal L}_p)}\big)}e^{-t\omega_p^\eta}d\eta.
    	\end{split}
		\]
		Apply this procedure for each point $x\in X$ with replace $0$ by $x$, we obtain the desired result.
	\end{proof}

\section{Morse inequalities on CR manifolds}\label{morseinequality}

In this section, we will establish the weak and strong Morse inequalities on CR manifolds from our heat kernel asymptotics. 

Now, we fix $q\in\set{0,1,\ldots,n}$ and assume that $Y(j)$ holds on $X$, for every $j=0,1,\ldots,q$. Let
\begin{equation}\label{e-gue210529yydI}
\Tr_q\left(e^{-\frac{t}{k}\Box_{b,k}^q}(x,x)\right):=\sum^d_{j=1}\langle\,e^{-\frac{t}{k}\Box_{b,k}^q}(x,x)v_j(x)\mid v_j(x)\,\rangle,
\end{equation}
where $\set{v_j}^d_{j=1}$ is an orthonormal basis for $T^{*0,q}_xX$. Let 
\begin{equation}\label{trace}
\Tr_q\left(e^{-\frac{t}{k}\Box_{b,k}^q}\right):=\int_X \Tr_q\left(e^{-\frac{t}{k}\Box_{b,k}^q}(x,x)\right)dv_X(x).
\end{equation}

It is well-known that 
\begin{equation}\label{e-gue210614yyd}
\dim H_b^{q}(X,L^k) \le\Tr_q\left(e^{-\frac{t}{k}\Box_{b,k}^q}\right),\ \ \mbox{for every $t>0$},
\end{equation}
and 
\begin{equation}\label{morse}
\sum_{j=0}^q(-1)^{q-j}\dim H_b^{j}(X,L^k) \le\sum_{j=0}^q(-1)^{q-j}\Tr_j\left(e^{-\frac{t}{k}\Box_{b,k}^j}\right),\ \ \mbox{for every $t>0$}. 
\end{equation}
For $x\in X$ and every $j=0,1,\ldots,q$, set
\begin{equation}\label{rq}
\begin{aligned}
\R_x(j)=\{\eta&\in\R\mid \dot{\mathcal R}^\phi_x-2\eta\dot{\mathcal L}_x\,\text{has exactly $j$ negative eigenvalues} 
	\\&\text{and $n-j$ positive eigenvalues}\}.
\end{aligned}
\end{equation}
Remark that since $Y(j)$ holds at each point of $X$, $\R_x(j)$ is bounded for all local weight of $L$. 
We are now in position to prove Theorem~\ref{thmmorse}.

\begin{proof}[Proof of Theorem~\ref{thmmorse}]
From Theorem~\ref{main2}, Proposition~\ref{p-gue210522yyd}, \eqref{morse} and Lebesgue dominate theorem, 
we have for every $t>0$, 
\begin{equation}\label{e-gue210530yyd}
	\begin{split}
	&\lim_{k\to\infty}k^{-(n+1)}\sum_{j=0}^q(-1)^{q-j}\dim H_b^{j}(X,L^k)\\
	&\leq\lim_{k\to\infty}k^{-(n+1)}\sum^q_{j=0}(-1)^{q-j}\Tr_j(e^{-\frac{t}{k}\Box^j_{b,k}})\\
	&=\sum^q_{j=0}(-1)^{q-j}\int_X\lim_{k\to\infty}k^{-(n+1)}\Tr_j(e^{-\frac{t}{k}\Box^j_{b,k}}(x,x))dv_X(x)\\
	&=\frac{1}{(2\pi)^{n+1}}\sum_{j=0}^q(-1)^{q-j}\int_X\int_\R\dfrac{\det(\dot{\mathcal{R}}^\phi_x-2\eta\dot{\mathcal L}_x)}{\det\big(1-e^{-t(\dot{\mathcal R}^\phi_x-2\eta\dot{\mathcal L}_x)}\big)}\Tr_j e^{-t\omega_x^\eta}d\eta dv_X(x).
		\end{split}
	\end{equation}
It is straightforward to check that for every $j=0,1,\ldots,q$, 
\begin{equation}\label{negative}
\begin{split}
	&\lim_{t\to\infty}\dfrac{\det(\dot{\mathcal{R}}^\phi_x-2\eta\dot{\mathcal L}_x)}{\det\big(1-e^{-t(\dot{\mathcal R}^\phi_x-2\eta\dot{\mathcal L}_x)}\big)}\Tr_j e^{-t\omega_x^\eta}\\
	&=(-1)^j1_{\R_x(j)}(\eta)\det(\dot{\mathcal{R}}^\phi_x-2\eta\dot{\mathcal L}_x)\\
	&=1_{\R_x(j)}(\eta)\abs{\det(\dot{\mathcal{R}}^\phi_x-2\eta\dot{\mathcal L}_x)},
	\end{split}
	\end{equation}
	where $1_{\R_x(q)}(\eta)$ is the characteristic function of $\R_x(q)$. From \eqref{e-gue210530yyd} and \eqref{negative}, we get 
	\begin{equation}\label{e-gue210530yydI}
	\begin{split}
	&\lim_{k\to\infty}k^{-(n+1)}\sum_{j=0}^q(-1)^{q-j}\dim H_b^{j}(X,L^k)\\
	&\leq\lim_{t\to\infty}\frac{1}{(2\pi)^{n+1}}\sum_{j=0}^q(-1)^{q-j}\int_X\int_\R\dfrac{\det(\dot{\mathcal{R}}^\phi_x-2\eta\dot{\mathcal L}_x)}{\det\big(1-e^{-t(\dot{\mathcal R}^\phi_x-2\eta\dot{\mathcal L}_x)}\big)}\Tr_j e^{-t\omega_x^\eta}d\eta dv_X(x)\\
	&=\frac{1}{(2\pi)^{n+1}}\sum_{j=0}^q(-1)^{q-j}\int_X\int_{\R_x(j)}\abs{\det(\dot{\mathcal{R}}^\phi_x-2\eta\dot{\mathcal L}_x)}d\eta dv_X(x).
		\end{split}
	\end{equation}
From \eqref{e-gue210530yydI}, we get \eqref{strong}. 

From \eqref{e-gue210614yyd}, we can repeat the procedure above and get \eqref{weak}. 
\end{proof}

\section{With $\mathbb R$-action}\label{S1}

In the last section, we have established the heat kernel asymptotics of Kohn Laplacian with values in $L^k$ when $Y(q)$ holds on each point of CR manifolds $X$. Nevertheless, in some important problems in CR geometry, we may need to study things without any assumption of the Levi form.
In this section, our task is to show that the same conclusion can be drawn for a CR manifold $X$ of dimensional $2n+1$, $n\ge 1$ with a transversal CR  $\mathbb R$-action.

We assume that $X$ admits a $\mathbb R$-action $\eta$, $\eta\in\mathbb R$: $\eta: X\to X$, $x\mapsto\eta\circ x$. Suppose that $X$ admits a $\mathbb R$-invariant complete Hermitian metric $\langle\,\cdot\mid\cdot\,\rangle$ on $\mathbb CTX$ so that we have the orthogonal decomposition \[
\C TX=T^{1,0}X\oplus T^{0,1}X\oplus\{\lambda T: \lambda\in\C\}
\]
and $|T|^2=\langle\,T\mid T\,\rangle=1$, where $T\in\cali{C}^\infty(X, TX)$ is the infinitesimal generator of  the $\mathbb R$-action which  is given by 
\eqref{e-gue150808}. Let $(L,h^L)$ be a rigid CR line bundle over $X$ with a $\mathbb R$-invaraint Hermitian metric $h^L$ on $X$. We will use the same notations as Section~\ref{s-gue210530yyd}. The following result due to Baouendi–Rothschild–Treves\cite[Proposition I.2]{BRT85} will be used to set up the local coordinates.
\begin{thm}
	For $p\in X$, there exist local coordinates $x=(x_1,\cdots,x_{2n+1})=(z,\theta)=(z_1,\cdots,z_n,\theta)$, $z_j=x_{2j-1}+ix_{2j}$, $j=1,\cdots,n$, defined in some small neighborhood $D=\{(z,\theta): |z|<\eps, |\theta|<\delta\}$ of $p$ such that
	\begin{equation}
	\begin{aligned}
	& T=-\frac{\pa}{\pa\theta}\\
	& Z_j=\frac{\pa}{\pa z_j}-i\frac{\pa\varphi(z)}{\pa z_j}\frac{\pa}{\pa\theta}
	\end{aligned}
	\end{equation}
	where $\{Z_j\}_{j=1}^n$ form a basis of $T_x^{1,0}X$ for each $x\in D$, $\varphi(z)\in C^\infty(D,\R)$ is independent of $\theta$. Moreover, we can take $(z,\theta)$ and $\varphi(z)$ such that $(z(p),\theta(p))=(0,0)$ and $\varphi(z)=\sum_{j=1}^n\lambda_j|z_j|^2+O(|z|^3)$ on $D$, where $\{\lambda_j\}_{j=1}^n$ are the eigenvalues of Levi form on $p$ with respect to the given $\mathbb R$-invariant Hermitian metric. We call $x=(x_1,\ldots,x_{2n+1})=(z,\theta)$ canonical local coordinates of $X$.
\end{thm}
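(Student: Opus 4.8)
The statement is the Baouendi--Rothschild--Treves normal form for a CR manifold with a transversal CR $\mathbb R$-action, so the plan is to reduce it to the classical statement for embedded (or abstract) CR structures of hypersurface type and then refine the coordinates using the extra group symmetry. First I would use the transversality of the $\mathbb R$-action: near a fixed reference point $p$ the orbit map $\eta\mapsto\eta\circ p$ together with a local slice transverse to $T$ gives a product-type coordinate $\theta$ (the flow parameter of $T$) so that $T=-\partial/\partial\theta$ on a neighborhood $D$; this uses that $T$ is non-vanishing and that $\C T\oplus T^{1,0}X\oplus T^{0,1}X=\C TX$, which forces the slice to carry the induced CR structure transversally. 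On the slice $\{\theta=0\}$ one has an abstract CR structure of hypersurface type in $2n$ real variables together with the contact/Levi data; choosing holomorphic-type coordinates $z=(z_1,\dots,z_n)$ on the slice adapted to $T^{1,0}X$ and extending them to be $\theta$-independent (possible precisely because the $\mathbb R$-action is CR, i.e.\ $[T,\cali C^\infty(X,T^{1,0}X)]\subset\cali C^\infty(X,T^{1,0}X)$, so the flow preserves the splitting and one can flow the frame) produces a local frame $Z_j$ of $T^{1,0}X$ with $[T,Z_j]=0$.

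The second step is to put the $Z_j$ into the stated shape. Since $Z_j\in T^{1,0}X\subset\ker\omega_0$ modulo the span of $T$, and since $Z_j$ and $z$ are $\theta$-independent, one can write $Z_j=\partial/\partial z_j - i a_j(z)\,\partial/\partial\theta$ after a linear change mixing the $z$'s with $\theta$; the integrability condition $[Z_j,Z_l]\in\cali C^\infty(X,T^{1,0}X)$ together with $\theta$-independence forces $\partial a_j/\partial z_l=\partial a_l/\partial z_j$, so $a_j=\partial\varphi/\partial z_j$ for some real-valued $\varphi(z)\in\cali C^\infty(D,\mathbb R)$ (reality of $\varphi$ comes from $T^{0,1}X=\overline{T^{1,0}X}$, i.e.\ the conjugate relation $\overline{a_j}=\partial\varphi/\partial\overline z_j$ must also hold). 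This is exactly the content of \cite[Proposition I.2]{BRT85}, which I would cite for this integrability-to-potential passage rather than redo it.

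The last step is the normalization $(z(p),\theta(p))=(0,0)$ and $\varphi(z)=\sum_j\lambda_j|z_j|^2+O(|z|^3)$. Translating moves $p$ to the origin. A unitary change of the $z$-coordinates diagonalizes the Hermitian part of the Hessian of $\varphi$ at $0$; the eigenvalues are then read off against Definition~\ref{d-gue210607ycd}: computing $\mathcal L_p(Z_j,\overline Z_l)=\tfrac1{2i}\langle[Z_j,\overline Z_l](p),\omega_0(p)\rangle$ from $Z_j=\partial_{z_j}-i\varphi_{z_j}\partial_\theta$ gives $\mathcal L_p(Z_j,\overline Z_l)=\varphi_{z_j\overline z_l}(0)$, so after diagonalizing these are precisely the Levi eigenvalues $\{\lambda_j\}_{j=1}^n$ with respect to the fixed $\mathbb R$-invariant metric. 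Finally one absorbs any linear and holomorphic-quadratic ($z_jz_l$) terms of $\varphi$ into a further $\theta$-independent change of the form $\theta\mapsto\theta+\mathrm{Re}(\text{holom.\ polynomial})$, which does not disturb $T=-\partial/\partial\theta$; what survives is the Hermitian quadratic part plus a cubic remainder. The main obstacle is the bookkeeping in the second step: one must check that the change of coordinates straightening $Z_j$ into $\partial_{z_j}-i\varphi_{z_j}\partial_\theta$ can be chosen $\theta$-independent and compatible with conjugation, which is exactly where both the transversality and the CR property of the action are essential and where one genuinely invokes \cite{BRT85}; everything else is linear algebra and Taylor expansion.
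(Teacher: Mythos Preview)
The paper does not give its own proof of this theorem: it is stated as a known result and attributed directly to Baouendi--Rothschild--Treves \cite[Proposition I.2]{BRT85}, with no argument supplied. Your proposal is therefore strictly more than what the paper does---you sketch the actual mechanism (flow-box coordinates from transversality, $\theta$-independence from the CR property of the action, the integrability-to-potential step, and the quadratic normalization of $\varphi$)---while the paper simply cites the source. Since you also invoke \cite{BRT85} at the key integrability step, the approaches are compatible; your outline is a reasonable expansion of what the citation encodes, and the final normalization $\varphi(z)=\sum_j\lambda_j|z_j|^2+O(|z|^3)$ via a unitary change and absorption of pluriharmonic terms into $\theta$ is standard and correct.
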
 

For $p\in X$, let $s$ be a rigid CR trivializing section of $L$ on an open set $D$ of $p$ and $|s|^2_{h^L}=e^{-\phi}$. We take canonical local coordinates $x$ and rigid CR trivializing section $s$ such that 
\eqref{local1}, \eqref{local2}, \eqref{U}  hold and 
\begin{equation}\label{e-gue210531yyds}
\phi(z)=\sum_{j,l=1}^{n}\mu_{j,l}z_j\ol z_l+O(|z|^3).
\end{equation}

On $D$, by using $s$, we will identify sections with functions in the natural way.
Follow the setting in section \ref{estimate}, we let 
\[\Box^q_{b,(k)}:=\Box^q_{\rho,(k)},\]
where $\Box^q_{\rho,(k)}$ is as in \eqref{e-gue210614yydI}. Note that $\rho=0$ and $\beta=0$ in this case. Let
\begin{equation}\label{e-gue210305yydy}
A_{k\phi,\delta}(\frac{t}{k},x,y):=e^{\frac{k\phi(x)}{2}}A_{k,s,\delta}(\frac{t}{k},x,y)e^{-\frac{k\phi(y)}{2}},
\end{equation}
where $A_{k,s,\delta}(\frac{t}{k},x,y)$ is as in \eqref{e-gue210303yydIa}. We will also use the same notations as in Section~\ref{estimate}. 
Let 
\begin{equation}\label{e-gue210325yydq}
\begin{split}
&A_{(k),\delta}(t,x,y)\\
&:=k^{-(n+1)}A_{k\phi,\delta}(\frac{t}{k},F_kx,F_ky)\in\cali{C}^\infty(\mathbb R_+\times B_{\log k}\times B_{\log k}, F^*_kT^{*0,q}X\boxtimes(F^*_kT^{*0,q}X)^*).
\end{split}
\end{equation}
Let 
\[A_{(k),\delta}(t): F^*_k\Omega^{0,q}_c(B_{\log k})\To F^*_k\Omega^{0,q}(B_{\log k})\] 
be the continuous operator given by 
\begin{equation}\label{e-gue210325yydIq}
(A_{(k),\delta}(t)u)(x)=\int A_{(k),\delta}(t,x,y)u(y)m(F_ky)dy,\ \ u\in F^*_k\Omega^{0,q}_c(B_{\log k}).
\end{equation}
It is clear that 
\begin{equation}\label{e-gue210531ycdp}
\left(\frac{\pa}{\pa t}+\Box_{b,(k)}^q\right)A_{(k),\delta}(t)u=0,\ \ \mbox{for every $u\in F^*_k\Omega^{0,q}_c(B_{\log k})$}. 
\end{equation} 

Since $T=-\frac{\partial}{\partial\theta}$, we have for every $s\in\mathbb N$, 
\begin{equation}\label{e-gue210602yyd}
\mbox{$T^sF^*_ku=k^{-s}F^*_k(T^su)$ on $B_{\log k}$}\  \ \mbox{for all $u\in\Omega^{0,q}(F_k(B_{\log k}))$}.
\end{equation}
According to Kohn's $L^2$ estimate~\cite[Theorem 8.4.2]{K65} and Remark \ref{constant}, we deduce the following 

\begin{prop}\label{p-gue210531yyd}
Let $s\in\mathbb N_0$ and let $r>0$ with $B_{2r}\subset B_{\log k}$. Then, there exists a constant $C_{r,s}>0$ independent of $k$, such that for all $u\in F_k^*\omz^{0,q}(B_{\log k})$, 
\begin{equation}
\|u\|_{kF^*_{k}\phi,s+1,B_r}\le C_{r,s}\left(\|\Box_{b,(k)}^q u\|_{kF^*_{k}\phi,s,B_{2r}}+\|T^{s+1}u\|_{kF^*_{k}\phi,B_{2r}}+\|u\|_{kF^*_{k}\phi,B_{2r}}\right).
\end{equation}
\end{prop}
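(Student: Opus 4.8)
The plan is to reduce this to the subelliptic estimate of Kohn's $L^2$ theory stated in Theorem~\ref{kohn}, applied on $X$ to the Kohn Laplacian $\Box^q_{b,k}$, transported to the scaled picture via the maps $F_k$ and the unitary identifications set up in Section~\ref{estimate}. Recall that in the $\mathbb R$-action situation we have $\rho=0$ and $\beta=0$, so the local weight is $\phi_0=\phi$ with the normal form \eqref{e-gue210531yyds}, and $\Box^q_{b,(k)}=\Box^q_{\rho,(k)}$ satisfies the exact scaling identity $\Box^q_{b,(k)}(F^*_k u)=\frac1k F^*_k(\Box^q_{b,k\phi}u)$ coming from \eqref{k(k)}. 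The point of working with $\Box^q_{b,(k)}$ rather than assuming $Y(q)$ is that, without any Levi curvature assumption, Kohn's estimate \eqref{L2} still holds — but with the extra term $\|T^{s+1}u\|$ on the right — and $T=-\partial/\partial\theta$ behaves very cleanly under the scaling by \eqref{e-gue210602yyd}.

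First I would fix $p$, the canonical local coordinates $x=(z,\theta)$ on $D$, the rigid trivializing section $s$, and $r>0$ with $B_{2r}\subset B_{\log k}$. Given $u\in F^*_k\Omega^{0,q}(B_{\log k})$, choose $v\in\Omega^{0,q}(F_k(B_{2r}))$ with $F^*_k v=u$ on $B_{2r}$. Applying Theorem~\ref{kohn} (i.e. \eqref{L2}) to $v$ on the open set $F_k(B_{2r})\subset D$ — more precisely, to $\chi v$ for suitable cutoffs to localize, exactly as in Kohn's original interior argument — gives, for each $s\in\mathbb N_0$,
\[
\|v\|_{k\phi,s+1,F_k(B_r)}\le C\left(\|\Box^q_{b,k\phi}v\|_{k\phi,s,F_k(B_{2r})}+\|T^{s+1}v\|_{k\phi,F_k(B_{2r})}+\|v\|_{k\phi,F_k(B_{2r})}\right).
\]
Then I would pull this back under $F_k$: the Sobolev norm $\|\cdot\|_{kF^*_k\phi,s,B_r}$ differs from $\|\cdot\|_{k\phi,s,F_k(B_r)}$ only by powers of $k$ absorbed by the scaling of the coordinates and of the volume density $m(F_kx)$ (which, by Remark~\ref{constant}, has all derivatives uniformly bounded in $k$ on $B_{\log k}$), together with the factors $k^{-s}$ produced on the derivative terms. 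Using $\Box^q_{b,(k)}u=\frac1k F^*_k(\Box^q_{b,k\phi}v)$ and $T^{s+1}u=k^{-(s+1)}F^*_k(T^{s+1}v)$ from \eqref{e-gue210602yyd}, one checks the $k$-powers match on both sides: the left side $\|u\|_{kF^*_k\phi,s+1,B_r}$ carries $k^{-(s+1)}$-type weights relative to $v$, and each of the three terms on the right carries the same, so the inequality descends to the scaled picture with a constant independent of $k$.

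The main obstacle — and the only place real care is needed — is to confirm that the constant $C_{r,s}$ is genuinely $k$-independent. This is exactly the content of Remark~\ref{constant}: going through Kohn's proof of \eqref{L2}, the constant depends only on a finite number of $\cali{C}^\infty$-bounds of the coefficients of $\Box^q_{b,(k)}$, of $kF^*_k\phi$, and of $m(F_kx)$ on $B_{\log k}$; by the structure computation \eqref{boxrhok} (with $\rho=\beta=0$) and \eqref{e-gue210531yyds}, all these coefficients and their derivatives are uniformly bounded in $k$. One must also verify that the localization cutoffs needed to run Kohn's interior estimate can be chosen independent of $k$ once $r$ is fixed (they live on $B_{2r}$, not on all of $B_{\log k}$), which is immediate. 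Granting that bookkeeping, the statement follows.
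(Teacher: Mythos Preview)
Your proposal correctly identifies the key ingredients --- Kohn's estimate \eqref{L2} with the extra $T^{s+1}$ term, and Remark~\ref{constant} for the $k$-independence of the constant --- but the route you describe has a genuine gap. The strategy of applying \eqref{L2} to $v$ on $F_k(B_{2r})$ and then pulling back under $F_k$ does not work as stated, for two reasons. First, the constant $C_{s,k}$ in \eqref{L2} for the unscaled operator $\Box^q_{b,k\phi}$ genuinely depends on $k$ (its coefficients carry factors of $k$ through the terms $kU_j\phi$), so you inherit a bad constant before you even begin the pull-back. Second, and more concretely, your claim that ``the $k$-powers match on both sides'' is false: under the anisotropic scaling $F_k(z,\theta)=(z/\sqrt k,\theta/k)$, a derivative $\partial^\alpha$ with $\alpha=(\alpha',\alpha_{2n+1})$ picks up the factor $k^{-|\alpha'|/2-\alpha_{2n+1}}$, so the individual terms in the Sobolev norm $\|u\|_{kF^*_k\phi,s+1,B_r}$ carry \emph{different} powers of $k$ relative to the corresponding terms for $v$. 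There is no single weight $k^{-(s+1)}$ making the inequality homogeneous, and the mismatch cannot be absorbed into a $k$-independent constant.

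The paper's approach --- which your final paragraph essentially describes, but which you treat as a mere check rather than the argument itself --- avoids this entirely: one does not pull anything back, but instead runs Kohn's proof (the version underlying \eqref{L2}, with the $T^{s+1}$ term) directly for the scaled operator $\Box^q_{b,(k)}$ on $B_{\log k}$, exactly parallel to Proposition~\ref{p-gue210511yyd}. The constant produced by Kohn's argument depends only on finitely many $\cali{C}^\infty$-bounds of the coefficients of $\Box^q_{b,(k)}$, of $kF^*_k\phi$, and of $m(F_kx)$; by \eqref{boxrhok} (with $\rho=\beta=0$) and \eqref{e-gue210531yyds} these are uniform in $k$, whence $C_{r,s}$ is $k$-independent. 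Drop the detour through $X$ and argue directly on the scaled side.
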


\begin{lem}\label{l-gue210602yyd}
Let $s\in\mathbb N_0$. Let $r>0$ with $B_{r}\subset B_{\log k}$. We have 
\begin{equation}\label{e-gue210602yydI}
\|T^sA_{(k),\delta}(t)u\|_{kF^*_{k}\phi,B_r}\leq \delta^s\|u\|_{kF^*_k\phi,B_r},\ \ \mbox{for all $u\in F_k^*\omz^{0,q}_c(B_{\log k})$}.
\end{equation}
\end{lem}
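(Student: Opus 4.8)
\textbf{Proof proposal for Lemma~\ref{l-gue210602yyd}.}

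The plan is to transfer the estimate back to the unscaled level, where $-iT$ has a spectral bound on the range of $Q_{X,\le k\delta}$, and then use the explicit scaling relation \eqref{e-gue210602yyd} to absorb the powers of $k$. First I would pick $v\in\Omega^{0,q}_c(F_k(B_{\log k}))$ with $F_k^*v=u$, and observe via \eqref{e-gue210602yyd} that $T^sA_{(k),\delta}(t)u$ is, up to the factor $k^{-(n+1)}$, the scaled form of $T^s$ applied to the kernel of $e^{-\frac{t}{k}\Box^q_{b,k,\le k\delta}}$ evaluated on $v$; concretely $T^sA_{(k),\delta}(t)(F_k^*v)=k^{-s}F_k^*\big(T^sA_{k\phi,\delta}(\tfrac tk)(e^{k\rho}v)\big)$ after unwinding the unitary identifications \eqref{e-gue210303yydIa}, \eqref{e-gue210305yydy}, \eqref{e-gue210325yydq} (here $\rho=0$, $\beta=0$, so $e^{k\rho}=1$ and $kF_k^*\phi$ replaces $kF_k^*\phi_0$). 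Taking the $L^2_{kF^*_k\phi}(B_r)$-norm and changing variables by $F_k$ converts this to $k^{-s+n+1}$ times the $L^2_{k\phi}$-norm over $F_k(B_r)$ of $T^sA_{k\phi,\delta}(\tfrac tk)v$, which by the unitary identification equals $k^{-s}$ times $\|T^s e^{-\frac{t}{k}\Box^q_{b,k,\le k\delta}}(s^k\otimes\hat v)\|_{h^{L^k}}$ with $\hat v$ the function corresponding to $v$.

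Next I would use that $-iT$ is self-adjoint (see the remark after \eqref{e-gue210608yyd}, citing \cite[Lemma 4.3]{HMW}) and that it commutes with $e^{-t\Box^q_{b,k}}$ on the $\mathbb R$-invariant setting—this commutation is where rigidity of $L$ and $\mathbb R$-invariance of the metric enter, via $T\dbar_b=\dbar_b T$ and the corresponding statement for $\dbar_{b,k}^*$. Since $e^{-t\Box^q_{b,k,\le k\delta}}=e^{-t\Box^q_{b,k}}\circ Q_{X,\le k\delta}$ and $Q_{X,\le k\delta}$ projects onto $E_{-iT}([-k\delta,k\delta])$, the operator $T^s e^{-\frac{t}{k}\Box^q_{b,k,\le k\delta}}$ factors as $(-iT)^s$ restricted to the range of the projection (times $i^{-s}$), composed with the contraction $e^{-\frac{t}{k}\Box^q_{b,k}}$. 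By the spectral theorem, $\|(-iT)^s Q_{X,\le k\delta}w\|_{h^{L^k}}\le (k\delta)^s\|w\|_{h^{L^k}}$, and $e^{-\frac{t}{k}\Box^q_{b,k}}$ has operator norm $\le 1$. Hence $\|T^s e^{-\frac{t}{k}\Box^q_{b,k,\le k\delta}}w\|_{h^{L^k}}\le (k\delta)^s\|w\|_{h^{L^k}}$. Combining with the $k^{-s}$ factor produced above and re-expressing $\|s^k\otimes\hat v\|_{h^{L^k}}^2$ back through $F_k$ as $k^{n+1}\|u\|^2_{kF^*_k\phi,B_r}$ (the Jacobian of $F_k$ being $k^{-(n+1)}$, matching the $k^{-(n+1)}$ in \eqref{e-gue210325yydq}), all the $k$-powers cancel and only $\delta^s$ survives, giving \eqref{e-gue210602yydI}.

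The main obstacle I anticipate is bookkeeping: keeping straight the several unitary identifications (the weight change $u\leftrightarrow s^k\check u$, the scaling $F_k^*$, the volume form $m(F_kx)dx$ versus $dv_X$) so that the powers of $k$ from the Jacobian, from \eqref{e-gue210602yyd}, and from the normalization $k^{-(n+1)}$ in $A_{(k),\delta}$ all cancel exactly to leave the clean bound $\delta^s$ with no leftover $k$-dependence. A secondary point requiring a line of justification is that $v$ (hence $s^k\otimes\hat v$) lies in the domain where the commutation $T e^{-t\Box^q_{b,k}}=e^{-t\Box^q_{b,k}}T$ and the factorization through $Q_{X,\le k\delta}$ are legitimate; this follows because $e^{-t\Box^q_{b,k,\le k\delta}}$ already carries the projection $Q_{X,\le k\delta}$, so its output lies in $L^2_{(0,q),\le k\delta}(X,L^k)$ where $-iT$ is bounded by $k\delta$, and no domain subtleties beyond self-adjointness of $-iT$ are needed. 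A density/approximation argument (as in Lemma~\ref{l-gue210501yyd}) handles passing from smooth compactly supported data to the spectral estimate if needed.
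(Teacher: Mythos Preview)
Your proposal is correct and follows essentially the same route as the paper: unscale via $F_k^*v=u$, use \eqref{e-gue210602yyd} to pull out $k^{-s}$, pass to the global $h^{L^k}$-norm, and invoke the spectral bound $\|(-iT)^s\|\le (k\delta)^s$ on the range of $Q_{X,\le k\delta}$ together with the contractivity of $e^{-\frac{t}{k}\Box^q_{b,k}}$. The only slip is the direction of the Jacobian factor in your last sentence (it is $\|v\|^2_{h^{L^k}}=k^{-(n+1)}\|u\|^2_{kF^*_k\phi,B_r}$, not $k^{n+1}$), but since the same change of variables appears on both sides the $k$-powers still cancel to leave $\delta^s$, exactly as in the paper's computation \eqref{e-gue210412yydab}.
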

 
 \begin{proof} 
 Fix $r>0$ and $s\in\mathbb N_0$. Let $u\in F^*_k\Omega^{0,q}_c(B_r)$. Let $v\in\Omega^{0,q}_c(F_k(B_r))$ such that $F^*_kv=u$ on $B_r$. On $D$, we identify $\Box^q_{b,k}$ with $\Box^q_{b,k\phi}$ and sections of $L^k$ with functions. From \eqref{e-gue210602yyd}, we have on $B_r$, 
\begin{equation}\label{e-gue210410yyda}
\begin{split}
&T^sA_{(k),\delta}(t)u=T^s\bigr(F^*_k(A_{k\phi,\delta}(\frac{t}{k})v)\bigr)\\
&=\frac{1}{k^s}F^*_k\Bigr(T^s A_{k\phi,\delta}(\frac{t}{k})v\Bigr).
\end{split}
\end{equation} 
From \eqref{e-gue210410yyda}, we have 
\begin{equation}\label{e-gue210412yydab}
\begin{split}
&\|T^s A_{(k),\delta}(t)u\|_{kF^*_k\phi,B_r}=\|k^{-s}F^*_k(T^s A_{k\phi,\delta}(\frac{t}{k})v)\|_{kF^*_k\phi,B_r}\\
&=\|k^{-s+n+1}T^s A_{k\phi,\delta}(\frac{t}{k})v\|_{k\phi,F_k(B_r)}
\leq k^{-s+n+1}\|T^s e^{-\frac{t}{k}\Box^q_{b,k,\leq k\delta}}v\|_{h^{L^k}}\\
&\leq k^{n+1}\delta^s\|v\|_{h^{L^k}}=k^{n+1}\delta^s\|v\|_{k\phi,F_k(B_r)}
=\delta^s\|u\|_{kF^*_k\phi,B_r}.
\end{split}
\end{equation} 
 \end{proof}

From Proposition~\ref{p-gue210531yyd}, Lemma~\ref{l-gue210602yyd} and changing $\Box^q_{\rho,(k)}$ in the proof of 
Proposition~\ref{uniformly} to $\Box^q_{\rho,(k)}-T^2$, we can repeat the proofs of Proposition~\ref{uniformly}, Proposition~\ref{p-gue210522yyd} and Theorem~\ref{t-gue210503yyd} 
with minor changes and deduce 

\begin{thm}\label{t-gue210604yyd}
Let $I\subset\mathbb R_+$ be a compact interval and let $K\Subset X$ be a compact set. Then, there is a constant $C>0$ independent of $k$ such that 
\begin{equation}\label{e-gue210604yyd}
\abs{e^{-t\Box^q_{b,k,\leq k\delta}}(x,x)}_{\mathscr L(T^{*0,q}_xX,T^{*0,q}_xX)}\leq Ck^{n+1},\ \ \mbox{for all $x\in K$ and $t\in I$}. 
\end{equation} 

Moreover, 
\begin{equation}\label{e-gue210604yydI}
\lim_{k\To+\infty}A_{(k),\delta}(t,x,y)=B_\delta(t,x,y)
\end{equation}
locally uniformly on $\mathbb R_+\times H_n\times H_n$ in $\cali{C}^\infty$ topology, where 
$B_\delta(t,x,y)\in\cali{C}^\infty(\mathbb R_+\times H_n\times H_n,T^{*0,q}H_n\boxtimes(T^{*0,q}H_n)^*)$ and $B_\delta(t,x,y)$ satisfies
\begin{equation}\label{e-gue210604yydII}
B'_\delta(t)u+\Box^q_{H_n,\Phi}B_\delta(t)u=0\ \ \mbox{for all  $u\in\Omega^{0,q}_c(H_n)$ and for all $t>0$},
\end{equation}
\begin{equation}\label{e-gue210606yyda}
B_\delta(t)u\in{\rm Dom\,}\Box^q_{H_n,\Phi},\ \ \mbox{for every $u\in\Omega^{0,q}_c(H_n)$}
\end{equation}
and 
\begin{equation}\label{e-gue210604yydIII}
\|T^s_{H_n}B_\delta(t)u\|_{\Phi}\leq\delta^s\|u\|_{\Phi},\ \ \mbox{for all $u\in\Omega^{0,q}_c(H_n)$, $t>0$ and every $s\in\mathbb N_0$}.
\end{equation}
Here $B_\delta(t)$ is the continuous operator
$B_\delta(t): \Omega^{0,q}_c(H_n)\To\Omega^{0,q}(H_n)$ given by 
\[(B_\delta(t)u)(x)=\int B_\delta(t,x,y)u(y)dv_{H_n}(y),\ \ u\in\Omega^{0,q}_c(H_n).\]
\end{thm}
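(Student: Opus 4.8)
The plan is to transcribe, step by step, the whole machinery developed for the $Y(q)$ case in Sections~\ref{estimate}--\ref{heatasymptotic}, replacing the elliptic-type subelliptic estimate (Theorem~\ref{t-gue210228yyd}, Proposition~\ref{p-gue210511yyd}) by the weaker estimate in which the norm $\|T^{s+1}u\|$ appears on the right-hand side (Theorem~\ref{kohn}, Proposition~\ref{p-gue210531yyd}), and controlling that extra term by the spectral cut-off $\leq k\delta$, via Lemma~\ref{l-gue210602yyd}. Concretely, I would first replace the operator $\Box^q_{\rho,(k)}$ used as the ``regularizer'' throughout the proof of Proposition~\ref{uniformly} by $\Box^q_{\rho,(k)}-T^2=\Box^q_{b,(k)}-T^2$; since $T=-\partial_\theta$ commutes with $\Box^q_{b,k}$ on $D$ (rigidity of $L$ and of the metric) and since $-iT$ is self-adjoint with the cut-off $Q_{X,\le k\delta}$ bounding $\|T^s e^{-\frac{t}{k}\Box^q_{b,k,\le k\delta}}v\|\le k^{-s}(k\delta)^s\|v\|$, the bound $\|(\Box^q_{b,(k)}-T^2)^\ell A_{(k),\delta}(t)u\|\le \hat C_\ell\|u\|$ (uniform in $k$ and $t\in I$) follows exactly as in~\eqref{e-gue210412yyd}, using Lemma~\ref{l-gue210331yyd} and Lemma~\ref{l-gue210602yyd} together. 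Feeding this into Proposition~\ref{p-gue210531yyd} in place of Proposition~\ref{p-gue210511yyd} gives the same $O(k^0)$-type mapping properties for $A_{(k),\delta}(t)$ and its compositions with powers of $\Box^q_{b,(k)}-T^2$, and hence, as in Proposition~\ref{uniformly}, uniform $\cali{C}^\ell$-bounds for $A_{(k),\delta}(t,x,y)$ on $I\times B_r\times B_r$. The pointwise estimate~\eqref{e-gue210604yyd} then follows by specializing $x=y=p$ and letting $p$ range over the compact set $K$, exactly as in Proposition~\ref{p-gue210522yyd}.

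For the convergence statement~\eqref{e-gue210604yydI}, I would run the Cantor diagonal / limiting argument of Theorem~\ref{t-gue210503yyd}: from the uniform $\cali{C}^\infty$-bounds extract a subsequence with $A_{(k_j),\delta}(t,x,y)\to B_\delta(t,x,y)$ locally uniformly; pass to the limit in $A'_{(k),\delta}(t)+\Box^q_{\rho,(k)}A_{(k),\delta}(t)=0$ using Proposition on~\eqref{boxrhok} (with $\rho=0$, $\beta=0$ here) to get~\eqref{e-gue210604yydII}; establish~\eqref{e-gue210604yydIII} by passing to the limit in Lemma~\ref{l-gue210602yyd} (using $T^s_{H_n}F^*_k u=k^{-s}F^*_k(T^s u)$ and~\eqref{e-gue210602yyd}); and deduce~\eqref{e-gue210606yyda} from the $\ell$-power bound $\|(\Box^q_{H_n,\Phi})^\ell B_\delta(t)u\|_\Phi\le \frac{C_\ell}{t^\ell}\|u\|_\Phi$ (limit of~\eqref{e-gue210412yyd}) together with Lemma~\ref{l-gue210501yyd}. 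Since~\eqref{e-gue210604yydII}, \eqref{e-gue210606yyda} and the initial condition (which comes, as in~\eqref{e-gue210503ycd}, from integrating $A'_{(k),\delta}$ and applying Lebesgue dominated convergence, the initial value being $Q_{[-\delta,\delta]}$ on $H_n$) pin down $B_\delta(t)$ uniquely among such families, every subsequence has the same limit, so the full limit exists.

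The main obstacle, as in the $Y(q)$ case, is the uniform-in-$k$ control of constants: one must check that replacing $\Box^q_{\rho,(k)}$ by $\Box^q_{b,(k)}-T^2$ in Proposition~\ref{p-gue210531yyd} still yields constants $C_{r,s}$ independent of $k$, which reduces — as noted in Remark~\ref{constant} — to the fact that all derivatives of the coefficients of $\Box^q_{b,(k)}$, of $kF^*_k\phi$ and of $m(F_k x)$ are uniformly bounded in $k$ in $\cali{C}^\infty$ on $B_{\log k}$; here this is transparent from the canonical-coordinate normal form (Baouendi--Rothschild--Treves) and~\eqref{e-gue210531yyds}, since now $\rho=0$ so the scaled operator has an even simpler form than~\eqref{boxrhok}. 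A secondary technical point is that, because we only have the weaker subelliptic estimate, every application of the regularity bootstrap must carry the auxiliary $\|T^{s+1}u\|$ term along and absorb it via~\eqref{e-gue210602yydI}; this is routine but must be done consistently at each of the steps \eqref{e-gue210412yyd}--\eqref{e-gue210412yyda} of the original proof. Once these bookkeeping issues are settled, the argument is a verbatim repetition of Proposition~\ref{uniformly}, Proposition~\ref{p-gue210522yyd} and Theorem~\ref{t-gue210503yyd}, so I would state it as such rather than rewriting the estimates in full.
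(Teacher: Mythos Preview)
Your proposal is correct and follows essentially the same route as the paper: replace $\Box^q_{\rho,(k)}$ by $\Box^q_{\rho,(k)}-T^2$ as the regularizing operator, use Lemma~\ref{l-gue210602yyd} to absorb the extra $\|T^{s+1}u\|$ term coming from Proposition~\ref{p-gue210531yyd}, and then rerun Proposition~\ref{uniformly}, Proposition~\ref{p-gue210522yyd} and Theorem~\ref{t-gue210503yyd} verbatim.

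One point deserves comment. You bundle the initial-condition / uniqueness argument into your proof of this theorem, asserting that ``the initial value being $Q_{[-\delta,\delta]}$ on $H_n$'' follows as in~\eqref{e-gue210503ycd}. The paper deliberately does \emph{not} include this step here: Theorem~\ref{t-gue210604yyd} only records the properties \eqref{e-gue210604yydII}--\eqref{e-gue210604yydIII}, and the identification $B_\delta(t)=e^{-t\Box^{q,\delta}_{H_n,\Phi}}$ (hence the full limit, not just subsequential) is postponed to Proposition~\ref{p-gue210604yydex}, Proposition~\ref{p-gue210604yydfx} and Theorem~\ref{t-gue210606yyd}. The reason is that the analogue of~\eqref{e-gue210503ycd} is genuinely harder here: one must show that the \emph{scaled} spectral projector $A_{(k),\delta}(0)$ converges weakly to $Q_{[-\delta,\delta]}$ on $H_n$, and this is not an immediate consequence of dominated convergence. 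The paper handles it by splitting into the cases where the $\mathbb R$-action is free or comes from a torus action (Lemmas~\ref{l-gue201116yyd}--\ref{l-gue210604yydIz}), using the explicit Fourier-integral description of $Q_{X,\le k\delta}$ from~\cite{HMW}. So your sketch is right in spirit, but the clause ``the initial value being $Q_{[-\delta,\delta]}$'' hides nontrivial work that you would need to supply separately.
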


We will show that $B_\delta(t)=e^{-t\Box^{q,\delta}_{H_n,\Phi}}$, where $e^{-t\Box^{q,\delta}_{H_n,\Phi}}$ is given by \eqref{e-gue210604yydu}. We need more work. 
Since $X$ admits a $\mathbb R$ invariant Hermitian metric, from~\cite[Theorem 3.5]{HHL20},  we have one of the following two cases: 
\begin{equation}\label{e-gue201109yyd}
\begin{split}
&\mbox{(a) The $\mathbb R$-action is free},\\
&\mbox{(b) The $\mathbb R$-action comes from a CR torus action 
$\mathbb T^d$ on $X$ and $\omega_0$ is $\mathbb T^d$ invariant}. 
\end{split}
\end{equation}
Let us first assume that the $\mathbb R$-action is not free and hence the $\mathbb R$-action 
comes from a CR torus action 
$\mathbb T^d=(e^{i\theta_1},\ldots,e^{i\theta_d})$ on $X$ and $\omega_0$ is $\mathbb T^d$ invariant. Since the $\mathbb R$-action comes 
from the $\mathbb T^d$-action, 
there exist $\beta_1,\ldots,\beta_d\in\mathbb R$, such that 
\begin{equation}\label{e-gue201116yyda}
T=\beta_1T_1+\ldots+\beta_dT_d, 
\end{equation}
where $T_j$ is the vector field on $X$ given by $T_ju:=
\frac{\partial}{\partial\theta_j}((1,\ldots,1,e^{i\theta_j},1,\ldots,1)^*u)|_{\theta_j=0}$, 
$u\in\Omega^{0,q}(X)$, $j=1,\ldots,d$. Recall that on $D$, we identify sections with functions. Put 
\begin{equation}\label{e-gue201115ycdp}
\hat Q_{\leq k\delta}:=(2\pi)^{-1}\int e^{i<x_{2n+1}-y_{2n+1},\eta_{2n+1}>}1_{[-k\delta,k\delta]}(\eta_{2n+1})d\eta_{2n+1}. 
\end{equation}
We consider $\hat Q_{\leq k\delta}$ as a continuous operator
\begin{equation}\label{e-gue210604yydv}
\begin{split}
&\hat Q_{\leq k\delta}: \Omega^{0,q}_c(D,L^k)\To\Omega^{0,q}(D,L^k),\\
&(\hat Q_{\leq k\delta}u)(x)=(2\pi)^{-1}\int e^{i<x_{2n+1}-y_{2n+1},\eta_{2n+1}>}1_{[-k\delta,k\delta]}(\eta_{2n+1})u(x',y_{2n+1})dy_{2n+1}d\eta_{2n+1},
\end{split}
\end{equation}
where $x'=(x_1,\ldots,x_{2n})$. We will also write $x'$ to denote $(x',0)\in\mathbb R^{2n+1}$. 
We can repeat the proof of Lemma 4.21 in~\cite{HMW} and get 

\begin{lem}\label{l-gue201116yyd} 
Assume that the $\mathbb R$-action is not free. 
Fix $D_0\Subset D$. For $u\in\Omega^{0,q}_c(D,L^k)$, we have 
\begin{equation}\label{e-gue201116yydI}
\begin{split}
&Q_{X,\leq k\delta}u=\hat Q_{\leq k\delta} u+\hat R_{\leq k\delta}u\ \ \mbox{on $D_0$},\\
&(\hat R_{\leq k\delta}u)(x)=\frac{1}{2\pi}\sum_{(m_1,\ldots,m_d)\in\mathbb Z^d}\:
\int e^{i\langle x_{2n+1}-y_{2n+1},
\eta_{2n+1}\rangle+i(\sum^d_{j=1}m_j\beta_j)y_{2n+1}-im_1\theta_1-
\ldots-im_d\theta_d}\\
&\times1_{[-k\delta,k\delta]}(\eta_{2n+1}) (1-\chi(y_{2n+1}))u((e^{i\theta_1},
\ldots,e^{i\theta_d})\circ x')d\mathbb T_dd\eta_{2n+1}dy_{2n+1}\ \ \mbox{on $D_0$}, 
\end{split}
\end{equation}
where $\chi\in\cali{C}^\infty_c(I)$, $\chi(x_{2n+1})=1$ 
for every $(x',x_{2n+1})\in D_0$ and 
$\beta_1\in\mathbb R,\ldots,\beta_d\in\mathbb R$ are as in 
\eqref{e-gue201116yyda}.
\end{lem}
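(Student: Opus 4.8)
The plan is to follow the argument of \cite[Lemma 4.21]{HMW} almost verbatim, adapting the bookkeeping to the present weighted $L^k$ setting. First I would recall that, by hypothesis, the $\mathbb R$-action is not free, so by \eqref{e-gue201109yyd} it comes from a CR torus action $\mathbb T^d$ with $\omega_0$ invariant, and \eqref{e-gue201116yyda} writes $T=\beta_1T_1+\cdots+\beta_dT_d$. Using the rigid CR trivializing section $s$ on $D$ and canonical coordinates $(z,\theta)$ with $T=-\partial/\partial\theta$, sections of $L^k$ over $D$ are identified with functions, and since $h^L$ is $\mathbb R$-invariant the operator $-iT$ acts on these functions simply as $-i\partial/\partial x_{2n+1}$ composed with the torus translation. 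The starting point is the Fourier-inversion description of the spectral projection of $-iT$: for $u\in\Omega^{0,q}_c(X,L^k)$ one has a global formula for $Q_{X,\leq k\delta}u$ in terms of $E_{-iT}([-k\delta,k\delta])$, and the task is to show that on a relatively compact piece $D_0\Subset D$ this global projection splits as the "flat" model operator $\hat Q_{\leq k\delta}$ of \eqref{e-gue201115ycdp}--\eqref{e-gue210604yydv} plus a remainder $\hat R_{\leq k\delta}$ supported away from $D_0$ in the $x_{2n+1}$-variable.

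The key steps, in order, are as follows. Step one: decompose $u$ on $D$ into its $\mathbb T^d$-Fourier components $u=\sum_{(m_1,\dots,m_d)\in\mathbb Z^d} u_{m}$, where $u_m((e^{i\theta_1},\dots,e^{i\theta_d})\circ x')=e^{-im_1\theta_1-\cdots-im_d\theta_d}u_m(x')$; on each component $T_j$ acts as multiplication by $-im_j$, hence $-iT$ acts as multiplication by $-\sum_j m_j\beta_j$ combined with $-i\partial/\partial x_{2n+1}$ on the part of the $x_{2n+1}$-dependence not captured by the torus. Step two: insert a cutoff $\chi\in\cali C^\infty_c(I)$ with $\chi\equiv1$ on the $x_{2n+1}$-range of $D_0$, and split $u=\chi(x_{2n+1})u+(1-\chi(x_{2n+1}))u$. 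On the $\chi u$ piece the coordinate $x_{2n+1}$ genuinely ranges over all of $\mathbb R$ after the (compactly supported) data are viewed as functions on $\mathbb R$, so applying the spectral projection of $-iT$ reduces, via the Fourier transform in $x_{2n+1}$, to the flat multiplier $1_{[-k\delta,k\delta]}(\eta_{2n+1})$; this produces exactly $\hat Q_{\leq k\delta}u$ on $D_0$. Step three: collect the $(1-\chi)u$ piece together with the torus sum; here one must be careful that the periodic identification forces the extra phase $i(\sum_j m_j\beta_j)y_{2n+1}$ and the torus integration $d\mathbb T_d$, giving precisely the displayed formula for $\hat R_{\leq k\delta}u$, which vanishes on $D_0$ because $1-\chi$ does. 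Step four: verify that $\hat R_{\leq k\delta}u$ is a well-defined smooth section on $D_0$ — i.e. the sum over $\mathbb Z^d$ converges — by integrating by parts in $\theta_1,\dots,\theta_d$ using the smoothness of $u$ and of the transition functions, exactly as in the cited proof.

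The main obstacle I expect is Step three, the identification of the error term: one has to track carefully how the single $\mathbb R$-spectral parameter $\eta_{2n+1}$ interacts with the $d$ integer torus frequencies $m_j$ through the relation $T=\sum_j\beta_jT_j$, and to check that the non-closed orbit structure (the $\beta_j$ need not be rationally related) does not obstruct writing the projection in the stated form on the local chart. The correct handling is to perform the spectral calculus for $-iT$ on the ambient $L^2$ space (where $-iT$ is self-adjoint by \cite[Lemma 4.3]{HMW}), then localize; since this is precisely the computation carried out in \cite[Lemma 4.21]{HMW}, I would simply invoke that argument, noting only the two cosmetic changes needed here: the weight $e^{-k\phi}$ (harmless, as $\phi$ is $\theta$-independent in canonical coordinates, so it commutes with everything in the $x_{2n+1}$-direction) and the values-in-$L^k$ trivialization via the rigid section $s$ (harmless, as $s$ is rigid, i.e. $T^{L^k}s^k=0$). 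Everything else — the cutoff splitting, the Fourier inversion, and the convergence of the $\mathbb Z^d$-sum — transcribes without change, so the proof reduces to ``repeat the proof of \cite[Lemma 4.21]{HMW} with the above identifications.''
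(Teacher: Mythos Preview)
Your approach is exactly the paper's: the paper does not give an independent proof but simply states that one ``can repeat the proof of Lemma~4.21 in~\cite{HMW},'' and your outline of that argument (torus Fourier decomposition, cutoff splitting, Fourier inversion in $x_{2n+1}$, convergence of the $\mathbb Z^d$-sum) is the correct shape, with the two cosmetic adaptations you identify.

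One correction, though: in your Step three you assert that $\hat R_{\leq k\delta}u$ ``vanishes on $D_0$ because $1-\chi$ does.'' This is not right. The factor $(1-\chi(y_{2n+1}))$ sits inside the integral in the \emph{integration variable} $y_{2n+1}$, not at the evaluation point $x\in D_0$; the remainder $\hat R_{\leq k\delta}u(x)$ is a genuine nonzero correction encoding the contributions of the $\mathbb R$-flow that leave the chart and return via the torus periodicity. The point of the lemma is only to \emph{isolate} this term explicitly; its smallness (after rescaling, in the sense needed later) is established separately in Lemma~\ref{l-gue210604yydIz}, where the factor $(1-\chi(y_{2n+1}/k))\to 0$ pointwise as $k\to\infty$ is what drives the remainder to zero. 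So drop that parenthetical claim and your sketch is fine.
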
 

Put 
\begin{equation}\label{e-gue210604yydaz}
\tilde Q_{\delta}:=(2\pi)^{-1}\int e^{i<x_{2n+1}-y_{2n+1},\eta_{2n+1}>}1_{[-\delta,\delta]}(\eta_{2n+1})d\eta_{2n+1}. 
\end{equation}
We consider $\tilde Q_{\delta}$ as a continuous operator
\begin{equation}\label{e-gue210604yydva}
\begin{split}
&\tilde Q_{\delta}: F^*_k\Omega^{0,q}_c(B_{\log k})\To F^*_k\Omega^{0,q}(B_{\log k}),\\
&(\tilde Q_{\delta}u)(x)=(2\pi)^{-1}\int e^{i<x_{2n+1}-y_{2n+1},\eta_{2n+1}>}1_{[-\delta,\delta]}(\eta_{2n+1})u(x',y_{2n+1})dy_{2n+1}d\eta_{2n+1},
\end{split}
\end{equation}
where $x'=(x_1,\ldots,x_{2n})$. From Lemma~\ref{l-gue201116yyd} and the fact that $\lim_{t\To0}e^{-t\Box^q_{b,k,\leq k\delta}}=Q_{X,\leq k\delta}$, it is straightforward to check that 

\begin{lem}\label{l-gue210604yydz} 
Assume that the $\mathbb R$-action is not free. 
Fix $r>0$. For  $u\in F^*_k\Omega^{0,q}_c(B_{\log k})$, we have 
\begin{equation}\label{e-gue201116yydIs}
\begin{split}
&A_{(k),\delta}(0)u=\tilde Q_{\delta} u+\tilde R_{k,\delta}u\ \ \mbox{on $B_r$},\\
&(\tilde R_{k,\delta}u)(x)=\frac{1}{2\pi}\sum_{(m_1,\ldots,m_d)\in\mathbb Z^d}\:
\int e^{i\langle x_{2n+1}-y_{2n+1},
\eta_{2n+1}\rangle+i(\sum^d_{j=1}m_j\beta_j)\frac{y_{2n+1}}{k}-im_1\theta_1-
\ldots-im_d\theta_d}\\
&\times1_{[-\delta,\delta]}(\eta_{2n+1}) \left(1-\chi(\frac{y_{2n+1}}{k})\right)u((e^{i\theta_1},
\ldots,e^{i\theta_d})\circ x')d\mathbb T_dd\eta_{2n+1}dy_{2n+1}\ \ \mbox{on $B_r$}, 
\end{split}
\end{equation}
where $\chi\in\cali{C}^\infty_c(I)$ and 
$\beta_1\in\mathbb R,\ldots,\beta_d\in\mathbb R$ are as in Lemma~\ref{l-gue201116yyd}.
\end{lem}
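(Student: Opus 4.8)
\textbf{Proof proposal for Lemma~\ref{l-gue210604yydz}.}
The plan is to transport the decomposition of Lemma~\ref{l-gue201116yyd} through the scaling map $F_k$, using the initial condition $\lim_{t\To0}e^{-t\Box^q_{b,k,\leq k\delta}}=Q_{X,\leq k\delta}$ together with the definition \eqref{e-gue210325yydq} of $A_{(k),\delta}$. First I would observe that, by continuity of $A_{(k),\delta}(t,x,y)$ in $t$ (Theorem~\ref{t-gue210604yyd}) and \eqref{e-gue210325yydIq}, one has $A_{(k),\delta}(0) = k^{-(n+1)}F_k^*\circ Q_{X,\leq k\delta}\circ (F_k^*)^{-1}$ acting on $F_k^*\Omega^{0,q}_c(B_{\log k})$, where the conjugation incorporates the weight factors $e^{\pm k\phi/2}$ as in \eqref{e-gue210305yydy}. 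Thus for $u\in F_k^*\Omega^{0,q}_c(B_{\log k})$ supported in $B_r$, writing $u = F_k^*v$ with $v\in\Omega^{0,q}_c(F_k(B_r),L^k)$, we get $(A_{(k),\delta}(0)u)(x) = (F_k^*(Q_{X,\leq k\delta}v))(x)$ on $B_r$ once one checks that the scaling is compatible with the weights (here $\rho=0$, $\beta=0$, so no extra exponential enters and the identification is clean).

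Next I would feed in Lemma~\ref{l-gue201116yyd}: on a fixed $D_0\Subset D$ with $F_k(B_r)\subset D_0$ for $k$ large, $Q_{X,\leq k\delta}v = \hat Q_{\leq k\delta}v + \hat R_{\leq k\delta}v$. Pulling back by $F_k$, the term $F_k^*(\hat Q_{\leq k\delta}v)$ should become exactly $\tilde Q_\delta u$: indeed $\hat Q_{\leq k\delta}$ is the Fourier multiplier $1_{[-k\delta,k\delta]}$ in the $x_{2n+1}$-variable, and under $\theta\mapsto\theta/k$ the frequency variable rescales as $\eta_{2n+1}\mapsto k\eta_{2n+1}$, turning $1_{[-k\delta,k\delta]}$ into $1_{[-\delta,\delta]}$ and producing \eqref{e-gue210604yydaz}. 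The remainder $F_k^*(\hat R_{\leq k\delta}v)$ then becomes $\tilde R_{k,\delta}u$ after the same change of variables $y_{2n+1}\mapsto y_{2n+1}/k$, $\eta_{2n+1}\mapsto k\eta_{2n+1}$ in the integral \eqref{e-gue201116yydI}: the phase $i(\sum m_j\beta_j)y_{2n+1}$ becomes $i(\sum m_j\beta_j)\frac{y_{2n+1}}{k}$, the cutoff $1-\chi(y_{2n+1})$ becomes $1-\chi(\frac{y_{2n+1}}{k})$, and the torus-orbit argument $(e^{i\theta_1},\ldots,e^{i\theta_d})\circ x'$ is unaffected since it involves only the $x'=(x_1,\ldots,x_{2n})$ variables, which are likewise rescaled but the $\mathbb T^d$-action on them is intrinsic. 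Matching the Jacobian factors from the $y_{2n+1}$ and $\eta_{2n+1}$ substitutions against the normalization $k^{-(n+1)}$ and the $(2\pi)^{-1}$ in front gives precisely \eqref{e-gue201116yydIs}.

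The main obstacle I anticipate is bookkeeping the scaling constants and the volume-form factor $m(F_ky)$ carefully: the operator $A_{(k),\delta}(t)$ in \eqref{e-gue210325yydIq} integrates against $m(F_ky)dy$, whereas $Q_{X,\leq k\delta}$ in Lemma~\ref{l-gue201116yyd} is written in the flat coordinate integral, so one must track how the $k^{-(n+1)}$ normalization, the change of variables on $(z,\theta)$, and $m(F_ky)\to m(0)=1+O(\frac{1}{\sqrt k})$ interact — and verify that the statement as written holds on the nose (with the stated formula for $\tilde R_{k,\delta}$) rather than merely up to lower-order corrections. Since the lemma is only used later to show $\tilde R_{k,\delta}u\To0$ in the relevant topology as $k\To\infty$ (so that $B_\delta(t)$ inherits the spectral constraint of $\tilde Q_\delta$), it suffices to establish \eqref{e-gue201116yydIs} with the displayed remainder, and the constant-chasing, while tedious, is routine. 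I would also note that in the free-action case one argues separately (or more simply), but the non-free case handled here is the substantive one and is reduced, as above, to Lemma~\ref{l-gue201116yyd} of \cite{HMW} via the scaling $F_k$.
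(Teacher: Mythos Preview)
Your proposal is correct and follows exactly the approach the paper indicates: the paper's own proof consists of the single sentence ``From Lemma~\ref{l-gue201116yyd} and the fact that $\lim_{t\to0}e^{-t\Box^q_{b,k,\leq k\delta}}=Q_{X,\leq k\delta}$, it is straightforward to check,'' and what you have written is precisely the straightforward check --- identify $A_{(k),\delta}(0)$ with the $F_k$-pullback of $Q_{X,\leq k\delta}$, apply the decomposition of Lemma~\ref{l-gue201116yyd}, and rescale the $y_{2n+1}$ and $\eta_{2n+1}$ variables. Your caution about the volume factor $m(F_ky)$ and the $k^{-(n+1)}$ normalization is well-placed but resolves cleanly: the Jacobian of $y\mapsto F_ky$ contributes $k^{n+1}$, cancelling the normalization, and since $\phi$ here depends only on $z$ while the operators $\hat Q_{\leq k\delta}$, $\hat R_{\leq k\delta}$ act only in the $x_{2n+1}$-variable, the weight factors $e^{\pm k\phi/2}$ pass through harmlessly.
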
 

We need 

\begin{lem}\label{l-gue210604yydIz}
Assume that the $\mathbb R$-action is not free.
Let $f, g\in\Omega^{0,q}_c(H_n)$. Let $f_k, g_k\in F^*_k\Omega^{0,q}_c(B_{\log k})$ be as in \eqref{e-gue210504yydbc}. We have 
\begin{equation}\label{e-gue210604yydx}
\lim_{k\To+\infty}(\,A_{(k),\delta}(0)f_k\mid g_k\,)_{kF^*_k\phi}=(\,Q_{[-\delta,\delta]}f\mid g\,)_\Phi,
\end{equation}
where $Q_{[-\delta,\delta]}$ is given by \eqref{e-gue210604yydIx}. 
\end{lem}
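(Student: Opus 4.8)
\textbf{Proof proposal for Lemma~\ref{l-gue210604yydIz}.}
The plan is to reduce the statement to an explicit computation of the limiting kernel $A_{(k),\delta}(0,x,y)$ via Lemma~\ref{l-gue210604yydz}, then identify the limit with the kernel of $Q_{[-\delta,\delta]}$ given by \eqref{e-gue210521yydI} (specialized to the projection onto the spectral window $[-\delta,\delta]$ of $-iT_{H_n}$). First I would pair both sides against the explicit form of the scaled forms $f_k=\sum'_{|J|=q}f_J(z,\theta)\overline\omega^J(\tfrac{z}{\sqrt k},\tfrac{\theta}{k})$ and $g_k$ in \eqref{e-gue210504yydbc}. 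Since on $B_{\log k}$ the frame $\overline\omega^J(\tfrac{z}{\sqrt k},\tfrac{\theta}{k})$ converges in $\cali{C}^\infty$ on compact sets to the Euclidean frame $d\overline z^J$ of $T^{*0,q}H_n$, and the weight $kF^*_k\phi$ converges to $\Phi$ uniformly on $B_{\log k}$ (as recorded in Section~\ref{estimate} with $\rho=0$, $\beta=0$), the inner product $(\,\cdot\mid\cdot\,)_{kF^*_k\phi}$ restricted to a fixed ball converges to $(\,\cdot\mid\cdot\,)_\Phi$ on the relevant forms. So the whole statement comes down to controlling $A_{(k),\delta}(0)f_k$ on a fixed large ball $B_r$ containing the supports of $f,g$.

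Next I would invoke Lemma~\ref{l-gue210604yydz} to write $A_{(k),\delta}(0)f_k = \tilde Q_\delta f_k + \tilde R_{k,\delta}f_k$ on $B_r$. For the main term $\tilde Q_\delta f_k$: from \eqref{e-gue210604yydva} it is just the Fourier multiplier $1_{[-\delta,\delta]}(\eta_{2n+1})$ acting in the $x_{2n+1}$-variable on $f_k$, and as $k\to\infty$ this converges (in $L^2_{\rm loc}$, using $f_k\to f$ in $\cali{C}^\infty$ on $B_r$ and the fact that $\tilde Q_\delta$ is a bounded operator with $k$-independent kernel) to the corresponding multiplier applied to $f$. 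By \eqref{e-gue210521yydI}, that multiplier applied to $f$ is exactly $Q_{[-\delta,\delta]}f$, because for the Heisenberg group the operator $-iT_{H_n}=i\tfrac{\partial}{\partial\theta}$ has spectral projection onto $[-\delta,\delta]$ given precisely by the $\eta_{2n+1}$-frequency cutoff. This yields $\lim_k(\tilde Q_\delta f_k\mid g_k)_{kF^*_k\phi}=(Q_{[-\delta,\delta]}f\mid g)_\Phi$.

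For the remainder term $\tilde R_{k,\delta}f_k$ I would show $(\,\tilde R_{k,\delta}f_k\mid g_k\,)_{kF^*_k\phi}\to0$. The key point is the factor $1-\chi(\tfrac{y_{2n+1}}{k})$ in \eqref{e-gue201116yydIs}, which is supported in $|y_{2n+1}|\gtrsim k$: since $f$ (hence $f_k$, viewed in the $(z,\theta)$-coordinates after scaling $\theta\mapsto\theta/k$) has compact $\theta$-support, the integrand involves $u((e^{i\theta_1},\ldots,e^{i\theta_d})\circ x')$ evaluated at points where the $\theta$-variable is pushed out of the support of $f$ for large $k$, forcing the contribution of each Fourier mode $(m_1,\ldots,m_d)$ to decay; summing the rapidly decreasing tail over $\mathbb Z^d$ and integrating against the fixed-support $g_k$ gives a quantity tending to $0$. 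I expect this remainder estimate to be the main obstacle, essentially a repeat of the argument for the remainder in Lemma 4.21 of \cite{HMW} adapted to the scaled coordinates; one must be careful that the sum over $(m_1,\ldots,m_d)\in\mathbb Z^d$ together with the $1-\chi(\tfrac{y_{2n+1}}{k})$ cutoff and the compact support of $f$ in $\theta$ produce a uniform-in-$k$ bound that vanishes as $k\to\infty$. Finally, combining the two pieces gives \eqref{e-gue210604yydx}. For the case where the $\mathbb R$-action is free, the remainder term $\tilde R_{k,\delta}$ does not appear (there is no torus and no sum over $\mathbb Z^d$), and the same argument with $A_{(k),\delta}(0)f_k=\tilde Q_\delta f_k$ on $B_r$ directly gives the conclusion; I would remark on this separately.
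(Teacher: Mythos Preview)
Your overall strategy---decompose via Lemma~\ref{l-gue210604yydz} into $\tilde Q_\delta f_k+\tilde R_{k,\delta}f_k$, handle the main term by convergence of weights and frames, and show the remainder pairing vanishes---is exactly the paper's approach, and your treatment of the main term is correct.

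The gap is in the remainder mechanism. Your claim that ``the $\theta$-variable is pushed out of the support of $f$ for large $k$'' is not what happens: in \eqref{e-gue201116yydIs} the function $f_k$ is evaluated at $(e^{i\theta_1},\ldots,e^{i\theta_d})\circ x'$, a point on the compact torus orbit of $x'=(x_1,\ldots,x_{2n},0)$, and this evaluation point is entirely independent of the integration variable $y_{2n+1}$. So the cutoff $1-\chi(\tfrac{y_{2n+1}}{k})$, supported where $|y_{2n+1}|\gtrsim k$, does not move the argument of $f_k$ anywhere. What the paper does instead is pair $\tilde R_{k,\delta}f_k$ against $g_k$ and perform Fourier inversion in the $x_{2n+1}$-variable, which produces a factor $\widehat{1_{[-\delta,\delta]}}(\eta_{2n+1})$ together with $\overline g(x',y_{2n+1}-\eta_{2n+1})$; one then invokes the \emph{pointwise} convergence $(1-\chi(\tfrac{y_{2n+1}}{k}))\to0$ and Lebesgue dominated convergence. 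A point you omit, and which is essential for the domination, is that in these canonical coordinates the local weight $\phi$ depends only on $z$ (see \eqref{e-gue210531yyds}), so $e^{-k(F^*_k\phi)(x',y_{2n+1}-\eta_{2n+1})}=e^{-k(F^*_k\phi)(x',0)}$ stays uniformly bounded in all the variables; without this the weight could blow up along the $y_{2n+1}$-direction and the dominated convergence argument would fail.

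Finally, the lemma is stated only under the non-free hypothesis; your closing remark about the free case belongs to Proposition~\ref{p-gue210604yydex} (where indeed the remainder is absent by Lemma~\ref{l-gue201110yyd}), not here.
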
 

\begin{proof}
For simplicity, we assume that $q=0$ and hence $f_k=f$, $g_k=g$. The proof for general $(0,q)$ forms case is similar. 
From Lemma~\ref{l-gue210604yydz}, we have 
\begin{equation}\label{e-gue210604yydax}
(\,A_{(k),\delta}(0)f\mid g\,)_{kF^*_k\phi}=(\,\tilde Q_\delta f\mid g\,)_{kF^*_k\phi}+(\,\tilde R_{k,\delta} f\mid g\,)_{kF^*_k\phi}.
\end{equation}
From \eqref{e-gue210521yydI} and \eqref{e-gue210604yydva}, we can check that 
\begin{equation}\label{e-gue210604yydbx}
\lim_{k\To+\infty}(\,\tilde Q_\delta f\mid g\,)_{kF^*_k\phi}=(\,Q_{[-\delta,\delta]} f\mid g\,)_{\Phi}.
\end{equation}
We claim that 
\begin{equation}\label{e-gue210604yydcx}
\lim_{k\To+\infty}(\,\tilde R_{k,\delta}f\mid g\,)_{kF^*_k\phi}=0.
\end{equation}
From \eqref{e-gue201116yydIs} and Fourier inversion formula, we have 
\begin{equation}\label{e-gue210604yyddx}
\begin{split}
(\,&\tilde R_{k,\delta}f\mid g\,)_{kF^*_k\phi}\\
=&\frac{1}{2\pi}\sum_{(m_1,\ldots,m_d)\in\mathbb Z^d}\:
\int e^{i\langle x_{2n+1}-y_{2n+1},
\eta_{2n+1}\rangle+i(\sum^d_{j=1}m_j\beta_j)\frac{y_{2n+1}}{k}-im_1\theta_1-
\ldots-im_d\theta_d}1_{[-\delta,\delta]}(\eta_{2n+1})\\
&\times\left(1-\chi(\frac{y_{2n+1}}{k})\right)f((e^{i\theta_1},
\ldots,e^{i\theta_d})\circ x')\overline g(x)(F^*_km)(x)e^{-k(F^*_k\phi)(x)}d\mathbb T_dd\eta_{2n+1}dy_{2n+1}dx\\
=&\frac{1}{2\pi}\sum_{(m_1,\ldots,m_d)\in\mathbb Z^d}\:
\int e^{i(\sum^d_{j=1}m_j\beta_j)\frac{y_{2n+1}}{k}-im_1\theta_1-
\ldots-im_d\theta_d}\overline g(x',y_{2n+1}-\eta_{2n+1})\\
&\times (F^*_km)(x',y_{2n+1}-\eta_{2n+1})e^{-k(F^*_k\phi)(x',y_{2n+1}-\eta_{2n+1})}\widehat{1_{[-\delta,\delta]}}(\eta_{2n+1})\\
&\times  \left(1-\chi(\frac{y_{2n+1}}{k})\right)f((e^{i\theta_1},
\ldots,e^{i\theta_d})\circ x')d\mathbb T_dd\eta_{2n+1}dy_{2n+1}dx',
\end{split}
\end{equation}
where $m(x)dx=dv_X(x)$ on $D$, $x'=(x_1,\ldots,x_{2n})$, $\widehat{1_{[-\delta,\delta]}}(\eta_{2n+1})$ denotes the Fourier transform of $1_{[-\delta,\delta]}(\eta_{2n+1})$.
Note that $(F^*_k\phi)(x',y_{2n+1}-\eta)=(F^*_k\phi)(x',0)$. 
From \eqref{e-gue210604yyddx}, $\lim_{k\To+\infty}(1-\chi(\frac{y_{2n+1}}{k}))=0$ pointwise and Lebesgue dominate theorem, we get the claim \eqref{e-gue210604yydcx}. 

From \eqref{e-gue210604yydax}, \eqref{e-gue210604yydbx} and \eqref{e-gue210604yydcx}, the lemma follows. 
\end{proof}

Now, we assume that the $\mathbb R$-action is free. Let $D=U\times I$ 
be a canonical local coordinate patch with canonical local coordinates $x=(x_1,\ldots,x_{2n+1})$, where $U$ is an open set of $\mathbb C^n$ and $I$ is an open interval of 
$\mathbb R$. 
Since the $\mathbb R$-action is 
free, we can extend $x=(x_1,\ldots,x_{2n+1})$ to 
$\hat D:=U\times\mathbb R$. We identify $\hat D$ with an open set in $X$. The following is known (see~\cite[Lemma 4.7]{HMW})

\begin{lem}\label{l-gue201110yyd}
Assume that the $\mathbb R$-action is free. With the notations used above, 
for $u\in\Omega^{0,q}_c(D,L^k)$, we have 
\begin{equation}\label{e-gue201110yyd}
\begin{split}
&(Q_{X,\leq k\delta} u)(x)\\
&=\frac{1}{2\pi}\int e^{i<x_{2n+1}-y_{2n+1},\eta_{2n+1}>}1_{[-k\delta,k\delta]}(\eta_{2n+1})u(x',y_{2n+1})d\eta_{2n+1}\in\Omega^{0,q}(\hat D)
\end{split}
\end{equation}
and ${\rm supp\,}Q_{X,\leq k\delta} u\subset\hat D$, where $x'=(x_1,\ldots,x_{2n})$. 
\end{lem}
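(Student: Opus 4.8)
\textbf{Proof plan for Lemma~\ref{l-gue201110yyd}.}
The plan is to reduce the computation of $Q_{X,\leq k\delta}u$ to a direct spectral calculation for the operator $-iT$, exactly in the spirit of~\cite[Lemma 4.7]{HMW}. First I would use the freeness of the $\mathbb R$-action to extend the canonical local coordinates $x=(x_1,\ldots,x_{2n+1})$ from $D=U\times I$ to $\hat D=U\times\mathbb R$; because $T=-\partial/\partial\theta=-\partial/\partial x_{2n+1}$ in these coordinates (and $h^L$ is $\mathbb R$-invariant, so $\phi$ is independent of $\theta$ by \eqref{e-gue210531yyds}), the translation flow in the $x_{2n+1}$-variable \emph{is} the $\mathbb R$-action on $\hat D$. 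For $u\in\Omega^{0,q}_c(D,L^k)$ we may regard $u$ as compactly supported in $\hat D$, and define the candidate
\[
(\tilde Q u)(x):=\frac{1}{2\pi}\int e^{i\langle x_{2n+1}-y_{2n+1},\eta_{2n+1}\rangle}1_{[-k\delta,k\delta]}(\eta_{2n+1})u(x',y_{2n+1})\,d\eta_{2n+1},
\]
which is the partial Fourier transform in the last variable, cut off to $|\eta_{2n+1}|\le k\delta$, followed by the inverse transform; note $\tilde Qu\in\Omega^{0,q}(\hat D)$ and has support in $\hat D$.

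The key steps are then: (i) check that $\tilde Qu$ lies in the spectral subspace $E_{-iT}([-k\delta,k\delta])$, which amounts to observing that in the $x_{2n+1}$-variable the operator $-iT=-i\partial_\theta$ acts as multiplication by $\eta_{2n+1}$ on the Fourier side, so the cutoff $1_{[-k\delta,k\delta]}(\eta_{2n+1})$ precisely implements the spectral projection; (ii) check that $u-\tilde Qu$ is orthogonal to $L^2_{(0,q),\leq k\delta}(X,L^k)$ with respect to $(\,\cdot\mid\cdot\,)_{h^{L^k}}$. For (ii) the point is that the $\mathbb R$-invariance of the metric $\langle\,\cdot\mid\cdot\,\rangle$ and of $h^L$ makes the volume form and the fiber metric independent of $x_{2n+1}$ on $\hat D$, so the $L^2$ inner product decomposes as a product of an integral over $U$ and the flat $L^2(\mathbb R_{x_{2n+1}})$ pairing; Plancherel in the $x_{2n+1}$-variable then shows $1-\tilde Q$ kills the part of the spectrum in $[-k\delta,k\delta]$, and a density/approximation argument (any $v\in L^2_{(0,q),\leq k\delta}$ can be approximated, after multiplying by a cutoff supported near $D$, by smooth forms whose $-iT$-spectrum is localized) transfers this from the model pairing to the genuine global one. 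Since the orthogonal projection onto a closed subspace is unique, (i) and (ii) force $Q_{X,\leq k\delta}u=\tilde Qu$ on $\hat D$, and the support statement ${\rm supp\,}Q_{X,\leq k\delta}u\subset\hat D$ follows because $\tilde Qu$ is built by an operation in the $x_{2n+1}$-variable alone and $u$ already has compact support in $U$ in the transverse directions.

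The main obstacle I anticipate is step (ii): the decoupling of the global $L^2$ structure over $X$ into the local product structure over $\hat D$ is only literally true on $\hat D$, and one has to be careful that elements of $L^2_{(0,q),\leq k\delta}(X,L^k)$ need not be supported in $\hat D$. This is handled exactly as in~\cite[Lemma 4.7]{HMW}: one tests against $v=Q_{X,\leq k\delta}w$ for $w\in\Omega^{0,q}_c(D,L^k)$, uses that $-iT$ commutes with multiplication by functions of $x_{2n+1}$ and with the cutoff that localizes near $D$, and exploits that both $u$ and the localized $v$ are supported in $\hat D$ where the flat picture is valid; the contribution of the ``wrap-around'' part vanishes precisely because the $\mathbb R$-action is free (there is no periodicity to produce extra Fourier modes, in contrast to the torus case of Lemma~\ref{l-gue201116yyd}). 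Once this localization is in place the remaining computation is the routine Fourier-analytic identity above, so I would not belabor it.
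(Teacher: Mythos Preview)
The paper does not give its own proof of this lemma; it simply cites~\cite[Lemma 4.7]{HMW}. Your overall strategy---define the Fourier cutoff $\tilde Q u$ on $\hat D$, verify it lands in the spectral subspace, and check that $u-\tilde Qu$ is orthogonal to $E_{-iT}([-k\delta,k\delta])$---is the correct one and matches the standard argument.

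There is, however, a genuine slip in your justification of step~(ii). You write that one ``uses that $-iT$ commutes with multiplication by functions of $x_{2n+1}$ and with the cutoff that localizes near $D$.'' This is false: $-iT=i\partial_{x_{2n+1}}$ does \emph{not} commute with multiplication by any nonconstant function of $x_{2n+1}$, so a cutoff in the $x_{2n+1}$-direction destroys the spectral localization rather than preserving it. The argument you sketch there would not go through as written.

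The fix is to avoid cutoffs entirely. The point is that $\hat D=U\times\mathbb R$ is $\mathbb R$-invariant (this is exactly what freeness of the action buys you), and the metric and $h^L$ are $\mathbb R$-invariant, so the restriction map $L^2_{(0,q)}(X,L^k)\to L^2_{(0,q)}(\hat D,L^k)$ intertwines the two $\mathbb R$-actions. Hence if $v\in E_{-iT}([-k\delta,k\delta])$ globally, then $v|_{\hat D}$ lies in the corresponding spectral subspace for $i\partial_{x_{2n+1}}$ on $L^2(\hat D)$, i.e.\ its partial Fourier transform in $x_{2n+1}$ is supported in $[-k\delta,k\delta]$. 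Since $u-\tilde Qu$ is supported in $\hat D$ and has partial Fourier transform supported in the complement of $[-k\delta,k\delta]$, Plancherel in the $x_{2n+1}$-variable gives $(u-\tilde Qu\mid v)_{h^{L^k}}=0$ directly, with no localization needed. Equivalently, one can bypass (i)--(ii) altogether by writing $Q_{X,\leq k\delta}=1_{[-k\delta,k\delta]}(-iT)$ via Stone's theorem as a convolution along the $\mathbb R$-orbits, which on $\hat D$ is exactly convolution in $x_{2n+1}$ with $\widehat{1_{[-k\delta,k\delta]}}$; this immediately yields both the formula and the support statement.
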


Now, we can prove 
\begin{prop}\label{p-gue210604yydex}
We have 
\begin{equation}\label{e-gue210611yyd}
\lim_{t\To0}B_\delta(t)u=Q_{[-\delta,\delta]}u,\ \ \mbox{for all $u\in\Omega^{0,q}_c(H_n)$}, 
\end{equation}
where $B_\delta(t)$ is as in Theorem~\ref{t-gue210604yyd}. 
\end{prop}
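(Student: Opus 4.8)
The plan is to push the known initial condition $\lim_{t\To0}e^{-t\Box^q_{b,k,\leq k\delta}}=Q_{X,\leq k\delta}$ through the scaling limit of Theorem~\ref{t-gue210604yyd}, using Lemma~\ref{l-gue210604yydIz} (when the $\mathbb R$-action is not free) and Lemma~\ref{l-gue201110yyd} (when it is free) to identify the $k\To+\infty$ limit of $Q_{X,\leq k\delta}$, after scaling, with $Q_{[-\delta,\delta]}$. Concretely, fix $u\in\Omega^{0,q}_c(H_n)$ and an arbitrary test form $v\in\Omega^{0,q}_c(H_n)$, let $u_k,v_k\in F^*_k\Omega^{0,q}_c(B_{\log k})$ be the associated scaled forms as in \eqref{e-gue210504yydbc}, and pick $r\gg1$ with $\supp u\cup\supp v\subset B_r$. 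For each fixed large $k$ the map $s\mapsto A_{(k),\delta}(s)u_k$ is $L^2$-continuous on $[0,t]$ (with $A_{(k),\delta}(0)u_k$ the rescaled, conjugated action of $Q_{X,\leq k\delta}$ on $u_k$), smooth and solving \eqref{e-gue210531ycdp} for $s>0$; hence, exactly as in the derivation of \eqref{e-gue210503ycd},
\begin{equation}\label{e-prop-duhamel}
(A_{(k),\delta}(t)u_k\mid v_k)_{kF^*_k\phi}=(A_{(k),\delta}(0)u_k\mid v_k)_{kF^*_k\phi}-\int_0^t(\Box^q_{b,(k)}A_{(k),\delta}(s)u_k\mid v_k)_{kF^*_k\phi}\,ds.
\end{equation}

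The key quantitative input is a bound on the integrand in \eqref{e-prop-duhamel} uniform in $k$ and in $s\in(0,t]$. Since the metric $\langle\,\cdot\mid\cdot\,\rangle$ and $h^L$ are $\mathbb R$-invariant and $L$ is rigid, $T$ commutes with $\Box^q_{b,k}$, so $\Box^q_{b,k}$ commutes with $Q_{X,\leq k\delta}$ and therefore with $e^{-s\Box^q_{b,k,\leq k\delta}}$; unwinding the scaling as in \eqref{e-gue210410yyd} this gives $\Box^q_{b,(k)}A_{(k),\delta}(s)u_k=A_{(k),\delta}(s)\Box^q_{b,(k)}u_k$ on $B_r$. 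Because $A_{(k),\delta}(s)$ is an $L^2$-contraction and $\|\Box^q_{b,(k)}u_k\|_{kF^*_k\phi,B_r}$ is uniformly bounded in $k$ by \eqref{boxrhok} (the coefficients of $\Box^q_{b,(k)}$ are uniformly bounded and $u_k$ has $k$-independent coefficient data), we obtain
\[
\bigl|(\Box^q_{b,(k)}A_{(k),\delta}(s)u_k\mid v_k)_{kF^*_k\phi}\bigr|\leq C\,\|v_k\|_{kF^*_k\phi,B_r}\leq\hat C\,\|v\|_\Phi,\qquad s\in(0,t],
\]
with $\hat C$ independent of $k,s,t$. This is precisely the mechanism behind \eqref{e-gue210504yydc}, and no $Y(q)$ is used.

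Next I would let $k\To+\infty$ in \eqref{e-prop-duhamel}. By Theorem~\ref{t-gue210604yyd} the kernels $A_{(k),\delta}(s,x,y)$ converge to $B_\delta(s,x,y)$ locally uniformly in $\cali{C}^\infty$; together with $kF^*_k\phi\To\Phi$ locally uniformly, $m(F_k\cdot)\To m(0)$, and $u_k\To u$, $v_k\To v$, the left side of \eqref{e-prop-duhamel} tends to $(B_\delta(t)u\mid v)_\Phi$ and the integrand tends, for each $s>0$, to $(\Box^q_{H_n,\Phi}B_\delta(s)u\mid v)_\Phi=-(B'_\delta(s)u\mid v)_\Phi$ by \eqref{e-gue210604yydII}; the bound above then lets us pass to the limit under the $s$-integral by dominated convergence. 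For the initial term I split into the two cases of \eqref{e-gue201109yyd}: if the $\mathbb R$-action is free, the scaled analogue of Lemma~\ref{l-gue201110yyd} (obtained as Lemma~\ref{l-gue210604yydz} was) gives $A_{(k),\delta}(0)u_k=\tilde Q_\delta u_k$ on $B_r$, and since $\tilde Q_\delta$ agrees with $Q_{[-\delta,\delta]}$ after integrating out $\eta_1,\dots,\eta_{2n}$ in \eqref{e-gue210521yydI}, it follows that $(A_{(k),\delta}(0)u_k\mid v_k)_{kF^*_k\phi}\To(Q_{[-\delta,\delta]}u\mid v)_\Phi$; if the $\mathbb R$-action is not free, this convergence is exactly Lemma~\ref{l-gue210604yydIz}. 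Combining,
\[
(B_\delta(t)u\mid v)_\Phi=(Q_{[-\delta,\delta]}u\mid v)_\Phi+\int_0^t(B'_\delta(s)u\mid v)_\Phi\,ds,
\]
where the integrand is bounded by $\hat C\,\|v\|_\Phi$ on $(0,t]$; letting $t\To0$ the integral vanishes, so $\lim_{t\To0}(B_\delta(t)u\mid v)_\Phi=(Q_{[-\delta,\delta]}u\mid v)_\Phi$ for every $v\in\Omega^{0,q}_c(H_n)$, and since in addition $\|B_\delta(t)u\|_\Phi\leq\|u\|_\Phi$ uniformly (Fatou applied to the kernel convergence) this yields \eqref{e-gue210611yyd}.

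I expect the main difficulty to lie in the initial term. Because $Y(q)$ is unavailable there is no subelliptic smoothing on $X$, so the uniform estimates must come entirely from the contraction property of $e^{-s\Box^q_{b,k,\leq k\delta}}$ on $L^2_{(0,q),\leq k\delta}(X,L^k)$ together with $[T,\Box^q_{b,k}]=0$; and identifying the $k\To+\infty$ limit of $A_{(k),\delta}(0)$ with $Q_{[-\delta,\delta]}$ requires the spectral/Fourier description of $Q_{X,\leq k\delta}$ — and, in the non-free case, control of the remainder operator $\tilde R_{k,\delta}$ — encapsulated in Lemmas~\ref{l-gue201110yyd}, \ref{l-gue210604yydz} and \ref{l-gue210604yydIz}. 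Apart from that the argument is a direct transcription of the reasoning used for Theorem~\ref{t-gue210503yyd}, with the dominated-convergence step handled uniformly in $k$ despite the $k$-dependence of the inner products $(\,\cdot\mid\cdot\,)_{kF^*_k\phi}$ and of the bundles $F^*_kT^{*0,q}X$.
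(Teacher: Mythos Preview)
Your proposal is correct and follows essentially the same route as the paper: write the Duhamel identity \eqref{e-prop-duhamel}, bound the integrand uniformly in $k$ and $s$ (the paper invokes the analogue of \eqref{e-gue210412yyd} via spectral theory, while you equivalently commute $\Box^q_{b,(k)}$ through $A_{(k),\delta}(s)$ using $[T,\Box^q_{b,k}]=0$ and the $L^2$-contraction property), pass to the limit under the integral by dominated convergence, and identify the initial term with $(Q_{[-\delta,\delta]}u\mid v)_\Phi$ using Lemma~\ref{l-gue210604yydIz} in the non-free case and the scaled form of Lemma~\ref{l-gue201110yyd} in the free case. The final upgrade from weak to strong convergence via the uniform bound $\|B_\delta(t)u\|_\Phi\le\|u\|_\Phi$ is a small addition beyond what the paper states explicitly, but harmless.
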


\begin{proof}
Assume first that the $\mathbb R$-action is not free. Then the $\mathbb R$-action comes from a CR torus action 
$\mathbb T^d$ on $X$ and $\omega_0$ is $\mathbb T^d$ invariant. 
Let $f, g\in\Omega^{0,q}_c(H_n)$ and let $f_k, g_k\in F^*_k\Omega^{0,q}_c(B_{\log k})$ be as in \eqref{e-gue210504yydbc}. We have for every $t>0$, 
\begin{equation}\label{e-gue210504yydbz}
\begin{split}
&(\,A_{(k),\delta}(t)f_k\mid g_k\,)_{kF^*_k\phi}-
(\,A_{(k),\delta}(0)f_k\mid g_k\,)_{kF^*_k\phi}\\
&=\int^t_0(\,A'_{(k),\delta}(s)f_k\mid g_k\,)_{kF^*_k\phi}ds\\
&=-\int^t_0\left(\,\Box^q_{b,(k)}(A_{(k),\delta}(s)f_k)\,\big|\, g_k\,\right)_{kF^*_k\phi}ds.
\end{split}
\end{equation}
From \eqref{e-gue210412yyd} and \eqref{e-gue210325yydq}, we can apply Lebesgue dominate theorem and by using \eqref{e-gue210604yydx} to obtain
\begin{equation}\label{e-gue210604ycdx}
\begin{split}
&(\,B_\delta(t)f\mid g\,)_\Phi-(\,Q_{[-\delta,\delta]}f\mid g\,)_\Phi\\
&=\lim_{k\To+\infty}\int^t_0(\,A'_{(k),\delta}(s)f_k\mid g_k\,)_{kF^*_k\phi}ds\\
&=\int^t_0\lim_{k\To+\infty}(\,A'_{(k),\delta}(s)f_k\mid g_k\,)_{kF^*_k\phi}ds\\
&=\int^t_0(\,B'_{\delta}(s)f\mid g\,)_\Phi ds\\
&=(\,B_\delta(t)f\mid g\,)_\Phi-\lim_{t\To0}(\,B_\delta(t)f\mid g\,)_\Phi,
\end{split}\end{equation}
which implies \eqref{e-gue210611yyd}. 

Assume that the $\mathbb R$-action is free. Let $f\in\Omega^{0,q}_c(H_n)$ and let $f_k\in F^*_k\Omega^{0,q}_c(B_{\log k})$ be as in \eqref{e-gue210504yydbc}. From \eqref{e-gue201110yyd}, we can check that 
\begin{equation}\label{e-gue210611yydI}
\lim_{k\To+\infty}A_{(k),\delta}(0)f_k=Q_{[-\delta,\delta]}f\ \ \mbox{on every $B_r$, $r>0$}. 
\end{equation}
Repeat the procedure above, we then get \eqref{e-gue210611yyd}. 
\end{proof}

We need

\begin{prop}\label{p-gue210604yydfx}
We have 
\[B_\delta(t)u\in E_{-iT_{H_n}}([-\delta,\delta]),\ \ \mbox{for all $t>0$ and all $u\in\Omega^{0,q}_c(H_n)$}.\]
\end{prop}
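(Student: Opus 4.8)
The plan is to show that $B_\delta(t)u$ lies in the spectral subspace $E_{-iT_{H_n}}([-\delta,\delta])$ by exploiting the uniform bound on $T^s_{H_n}B_\delta(t)u$ established in \eqref{e-gue210604yydIII} of Theorem~\ref{t-gue210604yyd}. The key observation is that membership in $E_{-iT_{H_n}}([-\delta,\delta])$ can be characterized spectrally: if $A$ is a self-adjoint operator and $v$ is a vector with $v\in{\rm Dom\,}A^s$ for all $s\in\mathbb N_0$ and $\|A^sv\|\leq\delta^s\|v\|$ for all $s$, then $v\in E_A([-\delta,\delta])$. First I would verify this abstract fact using the spectral theorem (Theorem~\ref{spectralthm}): identifying $\mathbb H$ with $L^2(\sigma(A)\times\mathbb N,d\mu)$ and $A$ with multiplication by $s$, write $v=\varphi(s,n)$; then $\|A^mv\|^2=\int|s|^{2m}|\varphi|^2d\mu\leq\delta^{2m}\int|\varphi|^2d\mu$. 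Splitting the integral over $\{|s|>\delta\}$ and $\{|s|\leq\delta\}$ and letting $m\to\infty$, the portion supported on $\{|s|>\delta+\epsilon\}$ must vanish for every $\epsilon>0$, forcing $\varphi$ to be supported on $\{|s|\leq\delta\}$, i.e.\ $v\in E_A([-\delta,\delta])$.

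Next I would apply this with $A=-iT_{H_n}$ and $v=B_\delta(t)u$ for fixed $t>0$ and $u\in\Omega^{0,q}_c(H_n)$. From \eqref{e-gue210604yydIII} we have $\|T^s_{H_n}B_\delta(t)u\|_\Phi\leq\delta^s\|u\|_\Phi$ for all $s\in\mathbb N_0$, but we actually need the bound with $\|u\|_\Phi$ replaced by (a constant times) $\|B_\delta(t)u\|_\Phi$ — or, more simply, it suffices to observe that the abstract lemma works equally well with the bound $\|A^sv\|\leq C\delta^s$ for a fixed constant $C$ independent of $s$, since the argument only needs $\|A^sv\|^{1/s}$ to have limsup at most $\delta$. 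So the estimate $\|T^s_{H_n}B_\delta(t)u\|_\Phi\leq\delta^s\|u\|_\Phi$ is exactly what is required: taking $s$-th roots and letting $s\to\infty$ gives that the spectral measure of $-iT_{H_n}$ associated to $B_\delta(t)u$ is supported in $[-\delta,\delta]$. One should also confirm that $B_\delta(t)u\in{\rm Dom\,}(-iT_{H_n})^s$ for all $s$, which follows from $B_\delta(t)u\in\Omega^{0,q}(H_n)$ together with the finiteness of $\|T^s_{H_n}B_\delta(t)u\|_\Phi$ and a standard argument (as in Lemma~\ref{l-gue210501yyd}, using cutoffs $\chi_M$ and Friedrichs' lemma) showing that these $L^2$ bounds place $B_\delta(t)u$ in the domain of the relevant powers.

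I expect the main obstacle to be the bookkeeping around domains: one must be careful that $-iT_{H_n}$ is essentially self-adjoint (or that we are using its unique self-adjoint extension, as recorded earlier for the analogous operators), that the distributional derivatives $T^s_{H_n}B_\delta(t)u$ coincide with the operator powers applied in the $L^2$ sense, and that the $\cali{C}^\infty$-regularity of $B_\delta(t,x,y)$ from Theorem~\ref{t-gue210604yyd} legitimately gives $B_\delta(t)u\in\Omega^{0,q}(H_n)$ so the pointwise and $L^2$ pictures agree. None of this is deep, but it requires invoking the cutoff/Friedrichs'-lemma machinery once more. Once the domain issues are settled, the spectral-theorem argument with the exponential-growth-control estimate \eqref{e-gue210604yydIII} closes the proof immediately.
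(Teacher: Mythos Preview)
Your proposal is correct and follows essentially the same approach as the paper: both arguments use the estimate \eqref{e-gue210604yydIII} and a spectral-theoretic comparison of the growth rate $\delta^s$ against spectral values outside $[-\delta,\delta]$. The paper phrases this as showing $(\,B_\delta(t)u\mid v\,)_\Phi=0$ for every $v$ in a compact spectral band $E_{-iT_{H_n}}([-\tfrac{1}{\varepsilon},-\delta-\varepsilon]\cup[\delta+\varepsilon,\tfrac{1}{\varepsilon}])$ by writing $v=(-iT_{H_n})^s(-iT_{H_n})^{-s}v$ and moving the operator across, while you argue directly on the support of the spectral measure; these are equivalent formulations, and your attention to the domain bookkeeping is if anything more careful than the paper's.
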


\begin{proof}
Fix $u\in\Omega^{0,q}_c(H_n)$ and $t>0$. Let $\varepsilon>0$, $\varepsilon\ll1$. The map
\[-iT_{H_n}: E_{-iT_{H_n}}([-\frac{1}{\varepsilon},-\delta-\varepsilon]\bigcup[\delta+\varepsilon,\frac{1}{\varepsilon}])\To E_{-iT_{H_n}}([-\frac{1}{\varepsilon},-\delta-\varepsilon]\bigcup[\delta+\varepsilon,\frac{1}{\varepsilon}])\]
is one to one and onto. 

Write $(-iT_{H_n})^{-1}$ to denote the inverse of $-iT_{H_n}$ on $E_{-iT_{H_n}}([-\frac{1}{\varepsilon},-\delta-\varepsilon]\bigcup[\delta+\varepsilon,\frac{1}{\varepsilon}])$. It is clear that 
\begin{equation}\label{e-gue210606yyd}
\|(-iT_{H_n})^sg\|_{\Phi}\leq\Bigr(\frac{1}{\delta+\varepsilon}\Bigr)^s\|g\|_{\Phi},
\end{equation}
for all $s\in\mathbb N$ and every $g\in 
E_{-iT_{H_n}}([-\frac{1}{\varepsilon},-\delta-\varepsilon]\bigcup[\delta+\varepsilon,\frac{1}{\varepsilon}])$. Let $v\in E_{-iT_{H_n}}([-\frac{1}{\varepsilon},-\delta-\varepsilon]\bigcup[\delta+\varepsilon,\frac{1}{\varepsilon}])$. 
From \eqref{e-gue210604yydIII} and \eqref{e-gue210606yyd}, we have 
\begin{equation}\label{e-gue210606yydI}
\begin{split}
&\abs{(\,B_\delta(t)u\mid v\,)_\Phi}=\abs{(\,B_\delta(t)u\mid(-iT_{H_n})^s(-iT_{H_n})^{-s}v\,)_\Phi}\\
&=\abs{(\,(-iT_{H_n})^sB_\delta(t)u\mid(-iT_{H_n})^{-s}v\,)_\Phi}\leq\Bigr(\frac{\delta}{\delta+\varepsilon}\Bigr)^s\|u\|_\Phi\|v\|_\Phi,
\end{split}
\end{equation}
for every $s\in\mathbb N$. Let $s\To+\infty$ in \eqref{e-gue210606yydI}, we get $(\,B_\delta(t)u\mid v\,)_\Phi=0$. Hence, 
$B_\delta(t)u$ is orthogonal to  $E_{-iT_{H_n}}([-\frac{1}{\varepsilon},-\delta-\varepsilon]\bigcup[\delta+\varepsilon,\frac{1}{\varepsilon}])$, for every $\varepsilon>0$, $\varepsilon\ll1$. Hence, $B_\delta(t)u\in E_{-iT_{H_n}}([-\delta,\delta])$. 
\end{proof}

\begin{thm}\label{t-gue210606yyd}
We have 
\[B_\delta(t)=e^{-t\Box^{q,\delta}_{\Phi,H_n}},\]
where $e^{-t\Box^{q,\delta}_{\Phi,H_n}}$ is given by \eqref{e-gue210604yydu}. 
\end{thm}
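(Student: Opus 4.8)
The plan is to show that $B_\delta(t)$ and $e^{-t\Box^{q,\delta}_{H_n,\Phi}}$ solve the same heat-type initial value problem in the strong sense, and then conclude by the usual Duhamel/uniqueness argument that was already carried out in the final part of the proof of Theorem~\ref{t-gue210503yyd} and of Theorem~\ref{t-gue210521yyda}. The ingredients for $B_\delta(t)$ are now all in place: from Theorem~\ref{t-gue210604yyd} we have $B'_\delta(t)u+\Box^q_{H_n,\Phi}B_\delta(t)u=0$ and $B_\delta(t)u\in{\rm Dom\,}\Box^q_{H_n,\Phi}$ for every $u\in\Omega^{0,q}_c(H_n)$ and $t>0$; from Proposition~\ref{p-gue210604yydex} we have $\lim_{t\To0}B_\delta(t)u=Q_{[-\delta,\delta]}u$; and from Proposition~\ref{p-gue210604yydfx} we have $B_\delta(t)u\in E_{-iT_{H_n}}([-\delta,\delta])$. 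On the other side, $e^{-t\Box^{q,\delta}_{H_n,\Phi}}$ satisfies \eqref{e-gue210521yydII}, \eqref{e-gue210521yydIII} and \eqref{e-gue210521ycdI}. So abstractly the two families of operators satisfy exactly the list of properties used in Theorem~\ref{t-gue210503yyd} and Theorem~\ref{t-gue210521yyda} to pin down the heat kernel uniquely.

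Concretely, I would fix $u,v\in\Omega^{0,q}_c(H_n)$ and, mimicking \eqref{e-gue210504ycd}, write
\begin{equation*}
(\,u\mid e^{-t\Box^{q,\delta}_{H_n,\Phi}}v\,)_\Phi-(\,B_\delta(t)u\mid v\,)_\Phi
=\int^t_0\frac{\partial}{\partial s}\Bigr((\,B_\delta(t-s)u\mid e^{-s\Box^{q,\delta}_{H_n,\Phi}}v\,)_\Phi\Bigr)ds.
\end{equation*}
Expanding the $s$-derivative and using \eqref{e-gue210604yydII} for the $B_\delta$-factor together with \eqref{e-gue210521yydII} for the $e^{-s\Box^{q,\delta}_{H_n,\Phi}}$-factor, the two terms are
$(\,-\Box^q_{H_n,\Phi}B_\delta(t-s)u\mid e^{-s\Box^{q,\delta}_{H_n,\Phi}}v\,)_\Phi$ and $(\,B_\delta(t-s)u\mid -\Box^q_{H_n,\Phi}e^{-s\Box^{q,\delta}_{H_n,\Phi}}v\,)_\Phi$; since $B_\delta(t-s)u\in{\rm Dom\,}\Box^q_{H_n,\Phi}$ by \eqref{e-gue210606yyda} and $\Box^q_{H_n,\Phi}$ is self-adjoint, these cancel. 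Letting $s\To0^+$ in the first slot and $s\To t^-$ in the second and using $\lim_{s\To0}B_\delta(s)=Q_{[-\delta,\delta]}$ and $\lim_{s\To0}e^{-s\Box^{q,\delta}_{H_n,\Phi}}=Q_{[-\delta,\delta]}$, the boundary terms of the integral give $(\,B_\delta(t)u\mid Q_{[-\delta,\delta]}v\,)_\Phi$ and $(\,Q_{[-\delta,\delta]}u\mid e^{-t\Box^{q,\delta}_{H_n,\Phi}}v\,)_\Phi$. Using $B_\delta(t)u\in E_{-iT_{H_n}}([-\delta,\delta])$ (so $Q_{[-\delta,\delta]}$ may be dropped from the inner product with it), $e^{-t\Box^{q,\delta}_{H_n,\Phi}}v\in E_{-iT_{H_n}}([-\delta,\delta])$, and the self-adjointness of $e^{-t\Box^{q,\delta}_{H_n,\Phi}}$ together with $(\,u\mid e^{-t\Box^{q,\delta}_{H_n,\Phi}}v\,)_\Phi=(\,e^{-t\Box^{q,\delta}_{H_n,\Phi}}u\mid v\,)_\Phi$, one is left with $(\,e^{-t\Box^{q,\delta}_{H_n,\Phi}}u-B_\delta(t)u\mid v\,)_\Phi=0$ for all $v$, hence $B_\delta(t)=e^{-t\Box^{q,\delta}_{H_n,\Phi}}$.

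The one point that needs a little care — and is really the main obstacle — is the differentiability under the integral sign and the justification that the various limits $s\To0$, $s\To t$ may be taken inside the inner products; this is where smoothness and the uniform bounds from Theorem~\ref{t-gue210604yyd} (in particular the estimate analogous to \eqref{e-gue210504yydIII}, giving control of $(\Box^q_{H_n,\Phi})^\ell B_\delta(t)u$ in $L^2$ by $C_\ell t^{-\ell}\|u\|_\Phi$, which follows from \eqref{e-gue210412yyd} and \eqref{e-gue210325yydq} exactly as in the proof of Theorem~\ref{t-gue210503yyd}) enter. Granting these regularity facts — which are established by the same reasoning already used for $P(t)$ in Theorem~\ref{t-gue210503yyd} — the argument above is then a verbatim repetition of the final paragraphs of the proofs of Theorem~\ref{t-gue210503yyd} and Theorem~\ref{t-gue210521yyda}, so I would simply say ``we can repeat the procedure in the final part of the proof of Theorem~\ref{t-gue210503yyd}, using Theorem~\ref{t-gue210604yyd}, Proposition~\ref{p-gue210604yydex} and Proposition~\ref{p-gue210604yydfx}, and deduce that $B_\delta(t)=e^{-t\Box^{q,\delta}_{H_n,\Phi}}$.''
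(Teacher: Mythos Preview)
Your proposal is correct and follows essentially the same approach as the paper: the paper fixes $u,v\in\Omega^{0,q}_c(H_n)$, writes the Duhamel identity $(\,u\mid e^{-t\Box^{q,\delta}_{H_n,\Phi}}v\,)_\Phi-(\,B_\delta(t)u\mid v\,)_\Phi=\int_0^t\frac{\partial}{\partial s}(\,B_\delta(t-s)u\mid e^{-s\Box^{q,\delta}_{H_n,\Phi}}v\,)_\Phi\,ds$, uses \eqref{e-gue210606yyda} and self-adjointness of $\Box^q_{H_n,\Phi}$ to make the integrand vanish, and then concludes via self-adjointness of $e^{-t\Box^{q,\delta}_{H_n,\Phi}}$. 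The only point the paper leaves implicit---and which you spell out---is that Propositions~\ref{p-gue210604yydex} and~\ref{p-gue210604yydfx} are precisely what allow the boundary terms $(\,Q_{[-\delta,\delta]}u\mid e^{-t\Box^{q,\delta}_{H_n,\Phi}}v\,)_\Phi$ and $(\,B_\delta(t)u\mid Q_{[-\delta,\delta]}v\,)_\Phi$ to be identified with $(\,u\mid e^{-t\Box^{q,\delta}_{H_n,\Phi}}v\,)_\Phi$ and $(\,B_\delta(t)u\mid v\,)_\Phi$ respectively.
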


\begin{proof}
Let $u, v\in\Omega^{0,q}_c(H_n)$. From Propostion~\ref{p-gue210604yydex} and Proposition~\ref{p-gue210604yydfx}, we have 
\begin{equation}\label{e-gue210504ycdz}
\begin{split}
&(\,u\mid e^{-t\Box^{q,\delta}_{H_n,\Phi}}v\,)_\Phi-(\,B_\delta(t)u\mid v\,)_\Phi\\
&=\int^t_0\frac{\partial}{\partial s}\Bigr((\,B_{\delta}(t-s)u\mid e^{-s\Box^{q,\delta}_{H_n,\Phi}}v\,)_\Phi\Bigr)ds\\
&=\int^t_0(\,-B'_\delta(t-s)u\mid e^{-s\Box^{q,\delta}_{H_n,\Phi}}v\,)_\Phi ds+\int^t_0(\,B_\delta(t-s)u\mid-\Box^{q}_{H_n,\Phi}(e^{-s\Box^{q,\delta}_{H_n,\Phi}}v)\,)_\Phi ds.
\end{split}
\end{equation}
Form \eqref{e-gue210606yyda}, we see that $B_\delta(t-s)u\in{\rm Dom\,}\Box^q_{H_n,\Phi}$ and hence 
\begin{equation}\label{e-gue210504ycdIz}
(\,B_\delta(t-s)u\mid-\Box^q_{H_n,\Phi}(e^{-s\Box^{q,\delta}_{H_n,\Phi}}v)\,)_\Phi =(\,-\Box^{q}_{H_n,\Phi}B_\delta(t-s)u\mid e^{-s\Box^{q,\delta}_{H_n,\Phi}}v\,)_\Phi. 
\end{equation}
From \eqref{e-gue210604yydII}, \eqref{e-gue210504ycdz} and \eqref{e-gue210504ycdIz}, we deduce that 
\[(\,u\mid e^{-t\Box^{q,\delta}_{H_n,\Phi}}v\,)_\Phi-(\,B_\delta(t)u\mid v\,)_\Phi=0.\]
Since $(\,u\mid e^{-t\Box^{q,\delta}_{H_n,\Phi}}v\,)_\Phi=(\,e^{-t\Box^{q,\delta}_{H_n,\Phi}}u\mid v\,)_\Phi$, we conclude that $B_\delta(t)=e^{-t\Box^{q,\delta}_{H_n,\Phi}}$. 
\end{proof}

From Theorem~\ref{t-gue210521yyda} and Theorem~\ref{t-gue210606yyd}, we can repeat the proofs of Theorem~\ref{main2} and Theorem~\ref{thmmorse} and get Theorem~\ref{main3} and Theorem~\ref{thms1}.



\end{document}